\newcommand{\md}{\operatorname{mod}}
\newcommand{\proj}{\operatorname{proj}}
\newcommand{\gldim}{\operatorname{gl.dim}}
\newcommand{\projdim}{\operatorname{proj.dim}}
\newcommand{\injdim}{\operatorname{inj.dim}}
\newcommand{\Ext}{\operatorname{Ext}}
\newcommand{\Tor}{\operatorname{Tor}}
\newcommand{\Hom}{\operatorname{Hom}}
\newcommand{\End}{\operatorname{End}}
\newcommand{\add}{\operatorname{add}}
\newcommand{\thick}{\operatorname{thick}}
\newcommand{\rad}{\operatorname{rad}}
\newcommand{\Lotimes}{\otimes^\mathbb{L}}
\newcommand{\RHom}{\mathbb{R}\mathrm{Hom}}
\newcommand{\op}{\mathrm{op}}
\newcommand{\tilt}{\operatorname{tilt}}
\newcommand{\fres}{\operatorname{f\text{-}res}}
\newcommand{\fcores}{\operatorname{f\text{-}cores}}
\newcommand{\exs}{\operatorname{exs}}
\newcommand{\DPic}{\operatorname{DPic}}
\newcommand{\opL}{\operatorname{L}}
\newcommand{\Bcal}{\mathcal{B}}
\newcommand{\Dcal}{\mathcal{D}}
\newcommand{\Ecal}{\mathcal{E}}
\newcommand{\Fcal}{\mathcal{F}}
\newcommand{\Kcal}{\mathcal{K}}
\newcommand{\Scal}{\mathcal{S}}
\newcommand{\Vcal}{\mathcal{V}}
\newcommand{\sfrak}{\mathfrak{s}}
\newcommand{\pad}[2][1pt]{\hspace*{#1}#2\hspace*{#1}}
\newtheorem{thm-env}{Theorem}[section]
\newtheorem*{theorem*}{Theorem}
\newtheorem{theorem}[thm-env]{Theorem}
\newtheorem*{proposition*}{Proposition}
\newtheorem{proposition}[thm-env]{Proposition}
\newtheorem*{lemma*}{Lemma}
\newtheorem{lemma}[thm-env]{Lemma}
\newtheorem*{corollary*}{Corollary}
\newtheorem{corollary}[thm-env]{Corollary}
\newtheorem*{definition*}{Definition}
\theoremstyle{definition}
\newtheorem*{remark*}{Remark}
\newtheorem*{example*}{Example}
\newtheorem{remark}[thm-env]{Remark}
\newtheorem{example}[thm-env]{Example}
\crefname{chapter}{Chapter}{Chapters}
\crefname{section}{\S}{\S\S}
\crefname{subsection}{\S}{\S\S}
\crefname{equation}{}{}
\crefname{figure}{Figure}{Figures}
\crefname{table}{Table}{Tables}
\crefname{theorem}{Theorem}{Theorems}
\crefname{proposition}{Proposition}{Propositions}
\crefname{lemma}{Lemma}{Lemmata}
\crefname{corollary}{Corollary}{Corollaries}
\crefname{definition}{Definition}{Definitions}
\crefname{remark}{Remark}{Remarks}
\crefname{example}{Example}{Examples}
\newcolumntype{L}[1]{%
  >{\raggedright\let\newline\\\arraybackslash\hspace{0pt}}m{#1}%
}
\newcolumntype{C}[1]{%
  >{\centering\let\newline\\\arraybackslash\hspace{0pt}}m{#1}%
}
\newcolumntype{R}[1]{%
  >{\raggedleft\let\newline\\\arraybackslash\hspace{0pt}}m{#1}%
}
\definecolor{blueish}{HTML}{001e82}
\definecolor{greenish}{HTML}{0a5a0a}
\definecolor{redish}{HTML}{910019}
\definecolor{turquoisish}{HTML}{056e64}
\space\url{#1}%
\author{Jan Geuenich}
\address{%
  Fakultät für Mathematik,
  Universität Bielefeld,
  D-33501 Bielefeld%
}
\email{jan.geuenich@math.uni-bielefeld.de}
\title
[%
  Tilting modules for the Auslander algebra of $K\MakeLowercase{[x]/(x^n)}$%
]
{%
  Tilting modules for the \\ Auslander algebra of $K[x]/(x^n)$%
}
\subjclass[2010]{16G20, 16D90}
\keywords{Auslander algebra, tilting theory, braid group}
\begin{document}

\begin{abstract}
  We construct an isomorphism between the partially ordered set of tilting modules for the Auslander algebra of $K[x]/(x^n)$ and the interval of rational permutation braids in the braid group on $n$ strands.
  Hence, there are only finitely many tilting modules.
\end{abstract}

\maketitle
\thispagestyle{empty}

\section{Introduction}

In this note we classify all tilting modules for the Auslander algebra~$\Lambda_n$ of the truncated polynomial ring $K[x]/(x^n)$.

\medskip

Brüstle, Hille, Ringel and Röhrle \cite{Brustle.Hille.Ringel.ea:99} characterized the classical tilting modules for $\Lambda_n$ as those tilting modules that admit a $\Delta$-filtration with respect to the unique quasi-hereditary structure.
They also parameterized the basic classical tilting modules by the symmetric group~$\Scal_n$, proving there are $c_n = n!$ of them.

Iyama and Zhang \cite{Iyama.Zhang:16} strengthened this result by constructing an anti-isomorphism from~$\Scal_n$ viewed as a poset with the left weak order to the poset of classical tilting modules.
More precisely, they associated with each $w \in \Scal_n$ an ideal~$I_w$ in $\Lambda_n$ that is a classical tilting module.

\medskip

Our main result is the following:

\begin{theorem*}
 [{\cref{corollary:tilting-modules}}]
 The tensor products $I_v \otimes_{\Lambda_n} I_w$ with $v, w \in \Scal_n$ are the basic tilting modules for~$\Lambda_n$.
\end{theorem*}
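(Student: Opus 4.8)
The plan is to run everything through the braid group action on $D^b(\Lambda_n)$ and to reduce the whole statement to a single identification. First I would record that each Iyama–Zhang ideal $I_{s_i}$ attached to a simple transposition is a tilting module with $\End_{\Lambda_n}(I_{s_i})\cong\Lambda_n$, hence a two-sided tilting complex, so that $T_i\coloneqq{-}\Lotimes_{\Lambda_n}I_{s_i}$ is a standard autoequivalence of $D^b(\Lambda_n)$; checking the braid relations then gives a homomorphism $B_n\to\DPic(\Lambda_n)$, $\sigma_i\mapsto T_i$, under which a permutation braid $\widehat w$ acts as $T_w={-}\Lotimes_{\Lambda_n}I_w$ with $I_w=\Lambda_n\Lotimes_{\Lambda_n}I_w$. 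Granting the main theorem, which identifies the poset of basic tilting modules with the interval $[\Delta^{-1},\Delta]$ of rational permutation braids via an isomorphism $\Psi$, the entire corollary follows from the claim
\[
 I_v\otimes_{\Lambda_n}I_w\;\cong\;\Psi^{-1}\bigl(\widehat v^{\,-1}\,\widehat{w_0w^{-1}}\bigr),
\]
where $w_0$ is the longest element and $\widehat{w_0w^{-1}}=\Delta\,\widehat w^{\,-1}$.

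Indeed, once this identification is established both halves of the statement are immediate. On the one hand the right-hand side is by construction a basic tilting module, so every $I_v\otimes_{\Lambda_n}I_w$ is one. On the other hand every rational permutation braid is a left fraction $\alpha^{-1}\gamma$ of permutation braids, and since $w\mapsto w_0w^{-1}$ is a bijection of $\Scal_n$ the pairs $(\widehat v,\widehat{w_0w^{-1}})$ exhaust all such pairs $(\alpha,\gamma)$; hence every element of $[\Delta^{-1},\Delta]$, and therefore via $\Psi^{-1}$ every basic tilting module, is hit. The assignment $(v,w)\mapsto I_v\otimes_{\Lambda_n}I_w$ is highly non-injective --- for instance $I_v\otimes_{\Lambda_n}I_e=I_e\otimes_{\Lambda_n}I_v=I_v$ --- but the displayed formula shows it is surjective, which is all that is required.

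The crux, and what I expect to be the main obstacle, is the displayed identification itself, because it compares an \emph{ordinary} tensor product with an object produced by the \emph{derived} braid action. The naive derived image $T_w(I_v)=I_v\Lotimes_{\Lambda_n}I_w$ carries the positive braid $\widehat w\,\widehat v$, which generically lies far outside $[\Delta^{-1},\Delta]$ and is a genuine complex; the corollary instead concerns its truncation $H^0(I_v\Lotimes_{\Lambda_n}I_w)=I_v\otimes_{\Lambda_n}I_w$. So the real content is that discarding the higher terms $\Tor^{\Lambda_n}_{>0}(I_v,I_w)$ pulls the object back into the rational interval and leaves a rigid module of full rank sitting at the predicted point $\widehat v^{\,-1}\widehat{w_0w^{-1}}$ of the poset. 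I would attack this with the explicit minimal projective resolutions of the ideals $I_w$ coming from the symmetric-group combinatorics of Iyama–Zhang: these let me compute $\Tor^{\Lambda_n}_{*}(I_v,I_w)$ and the self-extensions $\Ext^{>0}_{\Lambda_n}(I_v\otimes_{\Lambda_n}I_w,I_v\otimes_{\Lambda_n}I_w)$ directly (using $\gldim\Lambda_n=2$ to bound everything), and to match the outcome against the torsion-theoretic description of $\Psi^{-1}$ on fractions. Controlling this truncation uniformly in $v$ and $w$ is the delicate step on which the argument stands or falls.
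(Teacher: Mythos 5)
Your high-level skeleton matches the paper's: there, too, the statement is deduced from the poset-isomorphism theorem (\cref{corollary:tilting-modules} follows from \cref{theorem:tilting-poset}). But the step you yourself single out as the crux rests on a misdiagnosis, and as written it is a genuine gap. You claim that $I_v \Lotimes_{\Lambda_n} I_w$ is ``generically a genuine complex'' and that the corollary concerns its truncation $H^0(I_v \Lotimes_{\Lambda_n} I_w)$, so that the real work is controlling $\Tor^{\Lambda_n}_{>0}(I_v,I_w)$ uniformly in $v,w$. In fact nothing is ever truncated: applying $\Tor^{\Lambda}(I_v,-)$ to the short exact sequence $0 \to I_w \to \Lambda \to \Lambda/I_w \to 0$ gives $\Tor^{\Lambda}_q(I_v,I_w) \cong \Tor^{\Lambda}_{q+1}(I_v,\Lambda/I_w)$ for $q>0$, which vanishes because $\projdim (I_v)_\Lambda \leq 1$; this three-line argument is exactly \cref{lemma:product-of-classical-tilting-is-tilting}, and it shows $I_v \Lotimes_{\Lambda} I_w \cong I_v \otimes_{\Lambda} I_w$ on the nose. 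Likewise, the fact that the positive braid $\underline{v}\,\underline{w}$ lies outside $[w_-,w_+]$ is no obstruction: the paper parametrizes $\tilt\Lambda$ by $[1,w_+^2]_L = \Scal_+\Scal_+$, and the rational interval $[w_-,w_+]_R$ enters only through the purely combinatorial anti-isomorphism $x \mapsto x^{-1}w_+$ (\cref{corollary:tilting-poset}); no homological correction is involved in that passage.

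The more serious problem is that your central identification $I_v\otimes_{\Lambda_n}I_w \cong \Psi^{-1}(\cdot)$ is never proved, and it cannot be proved while $\Psi$ is treated as a black box: an abstract poset isomorphism $\tilt\Lambda_n \cong [w_-,w_+]_R$ carries no information about which module sits over which braid. What makes the identification essentially tautological in the paper is that the isomorphism is \emph{constructed} from the braid-group homomorphism $\Bcal_n \to \DPic(\Lambda_n)$ of \cref{proposition:braid-group-to-derived-picard-group}: the inverse map is $x \mapsto T_{x^{-1}w_+}$, so the tilting module attached to the braid coming from the pair $(v,w)$ is $T_{\underline{v}\,\underline{w}} = I_v \Lotimes_{\Lambda} I_w$ by multiplicativity of that homomorphism, which equals $I_v\otimes_{\Lambda}I_w$ by the Tor-vanishing above. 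Your fallback plan --- computing $\Tor$ and $\Ext$ groups from explicit resolutions and ``matching against the torsion-theoretic description of $\Psi^{-1}$ on fractions'' --- is not a completable argument, since no such description is specified (and your proposed braid $\widehat v^{\,-1}\,\widehat{w_0w^{-1}}$ does not even match the paper's convention, under which the relevant braid is $\underline{w_0w^{-1}}\,\underline{v}^{-1}$). The missing idea is precisely that the parametrization is implemented by the derived Picard action, $x \mapsto \Lambda \Lotimes_{\Lambda} T_x$; with it your crux evaporates, and without it your argument does not close.
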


In order to get a complete and irredundant list of the tilting modules and to determine the tilting poset, we will refine the previous statement.

\medskip

For this, we consider the braid group $\Bcal_n$ with generators $\sfrak_1,\ldots,\sfrak_{n-1}$ subject to the braid relations.
The symmetric group $\Scal_n$ will be regarded as a subset of the braid group $\Bcal_n$ where elements $w \in \Scal_n$ with reduced expression $(i_1,\ldots,i_\ell)$ are identified with $\underline{w} := \sfrak_{i_1} \cdots \sfrak_{i_\ell}$.

The right weak order on $\Scal_n$ then extends to a partial order on $\Bcal_n$ such that $x \leq_R y$ if and only if $x^{-1} y$ can be written as a product of the generators $\sfrak_i$.
The elements $x \in \Bcal_n$ with the property $w_- \leq_R x \leq_R w_+$ now form the interval~\smash{$[w_-,w_+]_R$} of so-called \emph{rational permutation braids} where \smash{$w_- := \underline{w_0}^{-1}$}, \smash{$w_+ := \underline{w_0}$} and $w_0$ is the longest element of \smash{$\Scal_n$}.

\medskip

Observing that the assignment $(v,w) \mapsto \underline{v} \, \underline{w}^{-1}$ defines a bijection between the set of pairs $(v,w) \in \Scal_n \times \Scal_n$ without common right descent and $[w_-,w_+]_R$ (see \cite[Remark~5.9]{Digne.Gobet:17}), we can formulate the refined version of our result:

\begin{theorem*}
 [{\cref{corollary:tilting-poset}}]
 There is a poset isomorphism:
 \[
  \begin{tikzcd}[row sep = 0.1cm]
   [w_-, w_+]_R  \ar[r] & \tilt\Lambda_n
   \\
   \pad[1em]{\underline{v} \, \underline{w}^{-1}} \ar[r, mapsto] & \hspace{1em} I_{w{}^{\phantom{-1}}\hspace{-10pt}} \otimes_{\Lambda_n} I_{v^{-1} w_0} \hspace{-1em}
  \end{tikzcd}
 \]
\end{theorem*}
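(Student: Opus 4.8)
The plan is to deduce this refined statement from the classification in \cref{corollary:tilting-modules} together with the Digne--Gobet bijection. Writing $a := w$ and $b := v^{-1}w_0$, the proposed map sends the braid $\underline{v}\,\underline{w}^{-1}$ to the module $I_a \otimes_{\Lambda_n} I_b$, so \cref{corollary:tilting-modules} immediately guarantees that every basic tilting module occurs in the image. It therefore remains to show three things: that restricting the parameters to pairs $(v,w)$ without common right descent makes the assignment well defined and injective, that it is then also surjective, and that it intertwines the right weak order $\leq_R$ with the tilting order.

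The technically decisive ingredient is a \emph{sliding rule} for the ideals under $\otimes_{\Lambda_n}$. I would prove that whenever $\sfrak_i$ is a right descent of $w$ and, simultaneously, a left ascent of $b$ (so that $\ell(w)+\ell(b)=\ell(w\sfrak_i)+\ell(\sfrak_i b)$), one has
\[
 I_w \otimes_{\Lambda_n} I_b \;\cong\; I_{w\sfrak_i} \otimes_{\Lambda_n} I_{\sfrak_i b}.
\]
Since a common right descent $\sfrak_i$ of $v$ and $w$ satisfies exactly these hypotheses for $b=v^{-1}w_0$, the rule yields $I_w \otimes_{\Lambda_n} I_{v^{-1}w_0} \cong I_{w\sfrak_i} \otimes_{\Lambda_n} I_{(v\sfrak_i)^{-1}w_0}$, so the tilting module attached to $(v,w)$ coincides with the one attached to the length-reduced pair $(v\sfrak_i,\,w\sfrak_i)$, in agreement with the collapse $\underline{v}\,\underline{w}^{-1}=\underline{v\sfrak_i}\,\underline{w\sfrak_i}^{-1}$ in the braid group. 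This step I expect to be the main obstacle: one must check that the relevant underived tensor product is concentrated in degree zero and compute it explicitly, presumably by resolving one factor and exploiting the $\Delta$-filtrations coming from the quasi-hereditary structure on $\Lambda_n$.

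With the sliding rule in hand, the map is well defined on $[w_-,w_+]_R$: along the fibres of the Digne--Gobet bijection it only ever relates a pair to its common-descent reductions, on which the module is unchanged. The same rule shows that every $I_a \otimes_{\Lambda_n} I_b$ equals one coming from an admissible pair, so, combined with \cref{corollary:tilting-modules}, the map is surjective onto $\tilt\Lambda_n$. For injectivity I would observe that $I_a \otimes_{\Lambda_n} I_b$ depends only on the positive braid $\underline{a}\,\underline{b}$ preserved by the sliding rule, and then match cardinalities: the number of admissible pairs equals $\lvert [w_-,w_+]_R\rvert$ by Digne--Gobet and equals the number of basic tilting modules by \cref{corollary:tilting-modules}, so a surjection between finite sets of equal size is forced to be a bijection.

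Finally, for the order I would reduce to covering relations. A cover $x \lessdot_R x\sfrak_i$ in the right weak order corresponds under the bijection to a single tilting mutation; on the two boundary strata $v=e$ and $w=e$ this is precisely the Iyama--Zhang anti-isomorphism from the left weak order on $\Scal_n$ to the classical tilting modules, the twist $b=v^{-1}w_0$ by the longest element accounting for the passage from an anti-isomorphism to an isomorphism. Multiplicativity of the generator action $I_{\sfrak_i}\otimes_{\Lambda_n}-$ then propagates the correspondence across the whole interval, and checking that each such mutation is a cover in the tilting poset — and conversely that every tilting cover arises this way — shows that both the bijection and its inverse preserve order, completing the proof that it is a poset isomorphism.
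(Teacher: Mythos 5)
Your ``sliding rule'' is sound, and it is in fact exactly the paper's mechanism for well-definedness: since $\underline{w}\,\underline{b}=\underline{w\sfrak_i}\,\underline{\sfrak_i b}$ in $\Bcal_+$ when $\sfrak_i$ is a right descent of $w$ and a left ascent of $b$, the rule is the statement that $I_w\otimes_{\Lambda_n} I_b$ depends only on the positive braid $\underline{w}\,\underline{b}$, which the paper gets from the homomorphism $\Bcal_n\to\DPic(\Lambda_n)$, $x\mapsto T_x$ (\cref{proposition:braid-group-to-derived-picard-group}) together with $I_v\Lotimes_{\Lambda_n} I_w\cong I_v\otimes_{\Lambda_n} I_w$ (\cref{lemma:product-of-classical-tilting-is-tilting}). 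The decisive gap is injectivity. Your cardinality match is circular: \cref{corollary:tilting-modules} says only that every basic tilting module is of the form $I_v\otimes_{\Lambda_n} I_w$; it says nothing about how many isomorphism classes these tensor products represent. Distinct pairs do collapse (e.g.\ $I_v\otimes_{\Lambda_n}\Lambda_n=\Lambda_n\otimes_{\Lambda_n} I_v$, and for $n=3$ the $36$ pairs yield only $19$ modules), and what must be excluded is precisely any collapse \emph{beyond} the common-right-descent one encoded in your sliding rule. From surjectivity you only get $\#\tilt\Lambda_n\leq\#[w_-,w_+]_R$; the equality you invoke is the claim to be proved, not an input. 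In the paper this is step (2) of the proof of \cref{theorem:tilting-poset}: an induction on length following Aihara--Mizuno, which needs the lattice structure of $\tilt\Lambda_n$ (\cref{corollary:tilting-lattice}, resting on \cref{theorem:functorial-finiteness}), the join $\sfrak_{ij}$ of two generators and the identity $T_{\sfrak_{ij}}^\perp=T_{\sfrak_i}^\perp\cap T_{\sfrak_j}^\perp$, \cref{remark:tilting-order}, and the reachability statement (1) that $T_x\geq T_y$ forces $T_y=T_{zx}$ for some $z\in\Bcal_+$ --- which in turn uses \cref{theorem:happel-unger}, \cref{lemma:tilting-quiver} and the finiteness result \cref{lemma:only-finitely-many-tilting-modules}. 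Nothing in your proposal substitutes for this.

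Two further problems of the same nature. First, your order argument also presupposes the hard direction: that generator steps give arrows is easy (\cref{proposition:braid-monoid-to-tilting-complexes}, \cref{corollary:arrows-in-hasse-diagram}), but the converse claim that \emph{every} cover in $\tilt\Lambda_n$ is such a mutation is \cref{lemma:tilting-quiver}, proved via the Happel--Unger successor-closed-subquiver argument plus finiteness; moreover, matching covers under a map that is merely surjective does not make an inverse order-preserving --- you need injectivity first, so this part cannot be decoupled from the gap above. Second, within the paper's logic you cannot take \cref{corollary:tilting-modules} as a free input: it is itself deduced from \cref{theorem:tilting-poset} (the unrefined statement has no independent proof in the paper), so the route you propose would need to supply one, and the natural way to do that is exactly the machinery you are trying to bypass.
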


To illustrate the theorem, we depict below the Hasse diagram of the poset $\tilt\Lambda_3$ where $\otimes = \otimes_{\Lambda_3}$:
\begin{center}
 \vspace*{-0.2cm}
 \scalebox{0.82}{
  \begin{tikzpicture}[x=3.5cm, y=-3cm, scale=0.7]
  \tikzstyle{m1}=[thick, redish]
  \tikzstyle{m2}=[thick, blueish]

  \node (_) at ( 0, 0) {$\Lambda_3$};

  \node (1) at (-1,1) {$I_{s_1}$};
  \node (2) at ( 1,1) {$I_{s_2}$};

  \node (11) at (-2,2) {$I_{s_1} \otimes I_{s_1}$};
  \node (21) at (-1,2) {$I_{s_2 s_1}$};
  \node (12) at ( 1,2) {$I_{s_1 s_2}$};
  \node (22) at ( 2,2) {$I_{s_2} \otimes I_{s_2}$};

  \node (211) at (-1,3) {$I_{s_2 s_1} \otimes I_{s_1}$};
  \node (221) at (-2,3) {$I_{s_2} \otimes I_{s_2 s_1}$};
  \node (___) at ( 0,3) {$I_{w_0}$};
  \node (112) at ( 1,3) {$I_{s_1} \otimes I_{s_1 s_2}$};
  \node (122) at ( 2,3) {$I_{s_1 s_2} \otimes I_{s_2}$};

  \node (1221) at (-2,4) {$I_{s_1 s_2} \otimes I_{s_2 s_1}$};
  \node (2___) at (-1,4) {$I_{s_2} \otimes I_{w_0}$};
  \node (1___) at ( 1,4) {$I_{s_1} \otimes I_{w_0}$};
  \node (2112) at ( 2,4) {$I_{s_2 s_1} \otimes I_{s_1 s_2}$};

  \node (12___) at (-1,5) {$I_{s_1 s_2} \otimes I_{w_0}$};
  \node (21___) at ( 1,5) {$I_{s_2 s_1} \otimes I_{w_0}$};

  \node (______) at ( 0,6) {$I_{w_0} \otimes I_{w_0}$};

  \draw [->, m1] (_) -- (1);
  \draw [->, m2] (_) -- (2);

  \draw [->, m1] (1) -- (11);
  \draw [->, m2] (1) -- (21);
  \draw [->, m1] (2) -- (12);
  \draw [->, m2] (2) -- (22);

  \draw [->, m2] (11) -- (211);
  \draw [->, m2] (21) -- (221);
  \draw [->, m1] (21) -- (___);
  \draw [->, m2] (12) -- (___);
  \draw [->, m1] (12) -- (112);
  \draw [->, m1] (22) -- (122);

  \draw [->, m1] (211) -- (2___);
  \draw [->, m1] (221) -- (1221);
  \draw [->, m2] (___) -- (2___);
  \draw [->, m1] (___) -- (1___);
  \draw [->, m2] (112) -- (2112);
  \draw [->, m2] (122) -- (1___);

  \draw [->, m1] (2___) -- (12___);
  \draw [->, m2] (1221) -- (12___);
  \draw [->, m1] (2112) -- (21___);
  \draw [->, m2] (1___) -- (21___);

  \draw [->, m2] (12___) -- (______);
  \draw [->, m1] (21___) -- (______);
  \end{tikzpicture}
 }
\end{center}

As a consequence of the previous theorem, the number $t_n$ of isomorphism classes of basic tilting modules for $\Lambda_n$ equals the number of pairs of permutations in~$\Scal_n$ without common right descent.
These integers~$t_n$ form the sequence \href{https://oeis.org/A000275}{OEIS:A000275} and satisfy the recursive formula
\[
  t_0
  \:=\:
  1
  \,,
  \hspace{3em}
  t_n
  \:=\:
  \sum_{k=0}^{n-1} (-1)^{n+k+1} \textstyle\binom{n}{k}^2 \, t_k
  \hspace{1em}
  \text{for $n > 0$.}
\]
The first few values of the sequences $c_n$ and $t_n$ are recorded below:
\begin{center}
 \newcolumntype{D}{C{4em}}
 \scalebox{0.8}{
 \begin{tabular}{l||D|D|D|D|D|D|D}
  $n$ & 1 & 2 & 3 & 4 & 5 & 6 & 7
  \\ \hline\hline
  $c_n$ & 1 & 2 & 6 & 24 & 120 & 720 & 5040
  \\ \hline
  $t_n$ & 1 & 3 & 19 & 211 & 3651 & 90921 & 3081513
 \end{tabular}
 }
\end{center}

\section{Notation and terminology}

For the remaining part of this paper we fix an algebraically closed field $K$ and a finite-dimensional algebra $\Lambda$ over $K$.

\medskip

By a module we always mean a right module.
The category of finite-dimensional $\Lambda$-modules is denoted by $\md\Lambda$ and its bounded derived category by $\Dcal(\Lambda) = \Dcal^b(\md\Lambda)$.
We write $\proj\Lambda$ for the full subcategory of $\md\Lambda$ consisting of projective modules and $\Kcal(\Lambda) = \Kcal^b(\proj\Lambda)$ for its bounded homotopy category.
Given a complex $M \in \Dcal(\Lambda)$ we denote by $\add(M)$ the smallest full additive subcategory and by $\thick(M)$ the smallest full triangulated subcategory of $\Dcal(\Lambda)$ that contains all direct summands of~$M$.
We regard $\Kcal(\Lambda)$ as a full subcategory of~$\Dcal(\Lambda)$.
Objects of $\Dcal(\Lambda)$ isomorphic to objects in $\Kcal(\Lambda)$ are called \emph{perfect}.

\medskip

The \emph{Hasse diagram} of a poset $X$ is the quiver $Q(X)$ with vertex set~$X$ without parallel arrows such that there is an arrow $x \to z$ in~$Q(X)$ if and only if $x > z$ and $x \geq y \geq z$ only for $y \in \{x, z\}$.

\section{Background on tilting theory}

We collect relevant definitions and results from tilting theory needed later.
For details see \cite{Rickard:91,Yekutieli:99,Happel.Unger:05,Happel.Unger:05*1,Aihara.Iyama:12}.

\medskip

A complex $T$ in $\Kcal(\Lambda)$ is \emph{tilting} if $\Hom_{\Kcal(\Lambda)}(T, T[q]) = 0$ for all $q \neq 0$ and $\thick(\Lambda) = \Kcal(\Lambda)$.
A module $T$ in $\md\Lambda$ or a complex $T$ in $\Dcal(\Lambda)$ is said to be \emph{tilting} if $T$ viewed as an object in $\Dcal(\Lambda)$ is isomorphic to a tilting complex in $\Kcal(\Lambda)$.
By a \emph{classical tilting module} over $\Lambda$ we mean a tilting module $T \in \md\Lambda$ with $\projdim T \leq 1$.

\medskip

Tilting complexes $T, T' \in \Dcal(\Lambda)$ are \emph{equivalent} if $\add(T) = \add(T')$.
The set $\tilt^\bullet\Lambda$ of equivalence classes of tilting complexes in $\Dcal(\Lambda)$ forms a poset under the relation
\[
  T \geq T'
  \::\Leftrightarrow\:
  \Hom_{\Dcal(\Lambda)}(T,T'[q]) = 0 \:\: \forall\: q > 0
\]
as shown in \cite[Theorem~2.11]{Aihara.Iyama:12}.
We denote by $\tilt\Lambda$ the subposet of tilting modules and by $\tilt_1\Lambda$ the subposet of classical tilting modules.

\begin{remark}
 \label{remark:tilting-order}
 Set $X^\perp := \{ Y \in \md\Lambda \,|\, \Ext^q_\Lambda(X,Y) = 0 \:\: \forall\: q > 0 \}$.
 Then $T \geq T' \Leftrightarrow T^\perp \supseteq T'^\perp$ for all $T, T' \in \tilt\Lambda$ (see \cite[Lemma~2.1]{Happel.Unger:05*1}).
\end{remark}

Let $\Lambda^e = \Lambda^\op \otimes_K \Lambda$ be the enveloping algebra of $\Lambda$.
A \emph{two-sided tilting complex over $\Lambda$} is a complex $T \in \Dcal(\Lambda^e)$ such that
\[
 T \Lotimes_\Lambda \widetilde{T}
 \:\cong\:
 \Lambda
 \:\cong\:
 \widetilde{T} \Lotimes_\Lambda T
 \hspace{10pt}
 \text{in $\Dcal(\Lambda^e)$}
\]
for some $\widetilde{T} \in \Dcal(\Lambda^e)$.
The set of isomorphism classes of two-sided tilting complexes over $\Lambda$ is known as the \emph{derived Picard group} $\DPic(\Lambda)$.
It is a group with composition $- \Lotimes_\Lambda -$ and identity $\Lambda$ (see \cite{Yekutieli:99}).

\begin{theorem}
 [{\cite{Rickard:91}}]
 \label{theorem:rickard}
 For every two-sided tilting complex $T$ over $\Lambda$ the functor pairs
  \[
    \begin{tikzcd}[column sep = 2cm]
    \Dcal(\Lambda)
    \ar[r, yshift = 0.5ex, "{- \Lotimes_{\Lambda} T}"]
    &
    \Dcal(\Lambda)
    \ar[l, yshift = -0.5ex, "{- \Lotimes_{\Lambda} \widetilde{T}}"]
    \end{tikzcd}
    \hspace{3em}
    \begin{tikzcd}[column sep = 2cm]
    \Dcal(\Lambda^\op)
    \ar[r, yshift = 0.5ex, "{T \Lotimes_{\Lambda} -}"]
    &
    \Dcal(\Lambda^\op)
    \ar[l, yshift = -0.5ex, "{\widetilde{T} \Lotimes_{\Lambda} -}"]
    \end{tikzcd}
  \]
 are quasi-inverse equivalences of triangulated categories.

 \smallskip

 In particular, $T_\Lambda \in \tilt^\bullet \Lambda$ and ${}_\Lambda T \in \tilt^\bullet \Lambda^\op$ are tilting complexes.
\end{theorem}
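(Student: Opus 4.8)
My plan is to treat the two assertions separately, dealing first with the equivalences (which is essentially formal) and then with the ``in particular'' clause (whose one genuine difficulty is preservation of perfectness). Write $F = - \Lotimes_\Lambda T$ and $G = - \Lotimes_\Lambda \widetilde{T}$ for the functors on $\Dcal(\Lambda)$, and $F' = T \Lotimes_\Lambda -$, $G' = \widetilde{T} \Lotimes_\Lambda -$ for those on $\Dcal(\Lambda^\op)$. Derived tensoring with a fixed complex is a triangulated functor, so all four are triangulated. Since $\Lotimes_\Lambda$ is associative and unital up to coherent natural isomorphism, I would tensor the defining isomorphisms $T \Lotimes_\Lambda \widetilde{T} \cong \Lambda \cong \widetilde{T} \Lotimes_\Lambda T$ of $\Dcal(\Lambda^e)$ to obtain natural isomorphisms
\[
 G F \;\cong\; - \Lotimes_\Lambda (T \Lotimes_\Lambda \widetilde{T}) \;\cong\; - \Lotimes_\Lambda \Lambda \;\cong\; \id,
\]
and likewise $F G \cong \id$ and $G' F' \cong \id \cong F' G'$. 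The reason the defining isomorphisms are demanded in $\Dcal(\Lambda^e)$, rather than merely on one side, is exactly that they then respect both module actions, so the induced transformations are isomorphisms of functors on all of $\Dcal(\Lambda)$ and $\Dcal(\Lambda^\op)$. Hence the stated pairs are quasi-inverse triangle equivalences. (So that the derived tensor products are unconditionally defined, I would run this paragraph in the unbounded category $\Dcal(\mathrm{Mod}\,\Lambda)$; that everything restricts to $\Dcal^b(\md\Lambda)$ drops out once perfectness is known.)

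For the second assertion, the unit isomorphism identifies $T_\Lambda = \Lambda \Lotimes_\Lambda T \cong F(\Lambda)$ and ${}_\Lambda T \cong F'({}_\Lambda\Lambda)$. As $F$ is a fully faithful triangulated functor,
\[
 \Hom_{\Dcal(\Lambda)}(T_\Lambda, T_\Lambda[q]) \;\cong\; \Hom_{\Dcal(\Lambda)}(\Lambda, \Lambda[q]) \;=\; \Ext_\Lambda^q(\Lambda,\Lambda) \;=\; 0 \qquad (q \neq 0),
\]
since $\Lambda$ is projective; this is the self-orthogonality demanded of a tilting complex. Because a triangle equivalence commutes with the formation of thick subcategories, I also get $\thick(T_\Lambda) = \thick(F(\Lambda)) = F(\thick(\Lambda)) = F(\Kcal(\Lambda))$, using $\thick(\Lambda) = \Kcal(\Lambda)$. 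Thus both remaining requirements --- that $T_\Lambda$ be perfect and that $\thick(T_\Lambda) = \Kcal(\Lambda)$ --- collapse to the single claim that $F$ preserves perfect complexes, i.e.\ $F(\Kcal(\Lambda)) = \Kcal(\Lambda)$.

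This preservation is the hard part. I would prove it through the intrinsic description that $X \in \Dcal^b(\md\Lambda)$ is perfect if and only if, for every $Y \in \Dcal^b(\md\Lambda)$, one has $\Hom_{\Dcal(\Lambda)}(X, Y[q]) = 0$ for $q \gg 0$. The forward implication is immediate from representing $X$ by a bounded complex of projectives; the converse comes from letting $Y$ range over the simple modules and reading off the length of a minimal projective resolution of $X$ (a d\'evissage reduces a general $Y$ to the simples). This condition is visibly invariant under any triangle auto-equivalence of $\Dcal^b(\md\Lambda)$, so $F(\Kcal(\Lambda)) = \Kcal(\Lambda)$. Consequently $T_\Lambda = F(\Lambda)$ is perfect with $\thick(T_\Lambda) = \Kcal(\Lambda)$, giving $T_\Lambda \in \tilt^\bullet\Lambda$, and the identical argument for the equivalence $F'$ on $\Dcal(\Lambda^\op)$ yields ${}_\Lambda T \in \tilt^\bullet\Lambda^\op$. (Alternatively, I would extend $F$ to $\Dcal(\mathrm{Mod}\,\Lambda)$, where it commutes with arbitrary coproducts and therefore preserves compact objects; as the compact objects are precisely the perfect complexes, the same conclusion follows.)

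The only real obstacles, then, are this last point --- that the equivalence carries perfect complexes to perfect complexes --- and the coherence bookkeeping ensuring the associativity and unit isomorphisms of $\Lotimes_\Lambda$ assemble into honest isomorphisms of functors; everything else is formal manipulation of the derived tensor product.
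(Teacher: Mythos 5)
The paper does not prove this statement at all: it is imported as background from Rickard's paper (with Yekutieli and Miyachi cited for the bimodule formalism), so your proposal has to stand on its own. Its formal half does stand: establishing the quasi-inverse equivalences on the unbounded category $\Dcal(\mathrm{Mod}\,\Lambda)$ by tensoring the defining isomorphisms of $\Dcal(\Lambda^e)$, and reducing the ``in particular'' clause (self-orthogonality via full faithfulness, generation via $\thick(F(\Lambda)) = F(\thick(\Lambda))$) to the single claim that $F = -\Lotimes_\Lambda T$ preserves perfect complexes, is correct and standard.

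The genuine gap is a circularity in your preferred argument for that remaining claim. The characterization you invoke --- $X$ perfect iff $\Hom_{\Dcal(\Lambda)}(X,Y[q]) = 0$ for $q \gg 0$ for every $Y \in \Dcal^b(\md\Lambda)$ --- is internal to $\Dcal^b(\md\Lambda)$, so it transports only along a triangle auto-equivalence \emph{of} $\Dcal^b(\md\Lambda)$ (essential surjectivity is needed so that $FY$ sweeps out all test objects). But at that stage you only know $F$ is an auto-equivalence of $\Dcal(\mathrm{Mod}\,\Lambda)$: you explicitly postponed the fact that $F$ preserves the subcategory $\Dcal^b(\md\Lambda)$ until ``perfectness is known''. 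That postponed fact is not free. For a bounded complex of finite-dimensional bimodules over an algebra with $\gldim\Lambda = \infty$, derived tensoring can produce unbounded cohomology (take $\Lambda = K[x]/(x^2)$ and the simple bimodule: $\Tor_q^\Lambda(K,K) \neq 0$ for all $q$); what rules this out for a two-sided tilting complex is precisely perfectness of ${}_\Lambda T$ --- the thing being proved. So the main route, as organized, does not close. Your parenthetical alternative is the correct repair and should be the main argument: an equivalence of $\Dcal(\mathrm{Mod}\,\Lambda)$ preserves compact objects --- not because $F$ itself preserves coproducts, as you write (coproduct-preserving functors need not preserve compacts), but because its quasi-inverse, being the right adjoint and itself an equivalence, preserves coproducts --- and compacts are exactly the perfect complexes. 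Applying this to $T \Lotimes_\Lambda -$ on $\Dcal(\mathrm{Mod}\,\Lambda^\op)$ makes ${}_\Lambda T$ perfect, which is exactly what yields the postponed restriction of $F$ to $\Dcal^b(\md\Lambda)$; applying it to $F$ makes $T_\Lambda$ perfect, and your reduction then finishes the proof. (Alternatively, your characterization-based approach can be salvaged without unbounded categories by noting that $F$ restricts to an auto-equivalence of $\Dcal^-(\md\Lambda)$, inside which $\Dcal^b(\md\Lambda)$ is intrinsically the class of $X$ with $\Hom(Y, X[q]) = 0$ for $q \ll 0$ for all $Y$.)
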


A complex $T$ in $\Dcal(\Lambda^e)$ is said to be \emph{biperfect} if $T_\Lambda$ is perfect in~$\Dcal(\Lambda)$ and ${}_\Lambda T$ is perfect in $\Dcal(\Lambda^\op)$.
For every $T$ in $\Dcal(\Lambda^e)$ there are canonical morphisms $\Lambda \to \RHom_\Lambda(T,T)$ and $\Lambda \to \RHom_{\Lambda^\op}(T,T)$ in~$\Dcal(\Lambda^e)$, the so-called \emph{left-} and \emph{right-multiplication maps} (see \cite[\S~3]{Yekutieli:92}).

\begin{theorem}
 [{\cite[Proposition~1.8]{Miyachi:03}}]
 \label{theorem:rickard-miyachi}
 Let $T \in \Dcal(\Lambda^e)$ be biperfect.
 The complex $T$ is a two-sided tilting complex over $\Lambda$ if and only if both the left-multiplication map $\Lambda \to \RHom_\Lambda(T,T)$ and the right-multiplication map $\Lambda \to \RHom_{\Lambda^\op}(T,T)$ are isomorphisms in $\Dcal(\Lambda^e)$.
\end{theorem}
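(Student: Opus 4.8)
Throughout $T$ is biperfect, so the task is to match the property of being a two-sided tilting complex with the invertibility of the two multiplication maps. The organizing idea is that $(\Dcal(\Lambda^e),\Lotimes_\Lambda,\Lambda)$ is a monoidal category whose invertible objects are, by definition, exactly the two-sided tilting complexes (their isomorphism classes form $\DPic(\Lambda)$), and that the two multiplication maps express, after a perfectness duality, a left inverse and a right inverse for $T$.

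For the forward implication, assume $T$ is two-sided tilting with inverse $\widetilde T$. By \cref{theorem:rickard} the functor $-\Lotimes_\Lambda T$ is a triangle equivalence, so it induces an isomorphism $\Lambda\cong\RHom_\Lambda(\Lambda,\Lambda)\xrightarrow{\sim}\RHom_\Lambda(T,T)$; unwinding the equivalence one checks this is precisely the left-multiplication map, which is therefore an isomorphism in $\Dcal(\Lambda^e)$. The right-multiplication map is handled the same way using $T\Lotimes_\Lambda-$ on $\Dcal(\Lambda^\op)$.

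For the converse I would build the inverse out of the two one-sided duals and then merge them monoidally. Since $T_\Lambda$ is perfect, the tensor-evaluation morphism
\[
 \RHom_\Lambda(T,\Lambda)\Lotimes_\Lambda T\;\longrightarrow\;\RHom_\Lambda(T,T)
\]
is an isomorphism in $\Dcal(\Lambda^e)$; composing with the inverse of the left-multiplication map yields $\RHom_\Lambda(T,\Lambda)\Lotimes_\Lambda T\cong\Lambda$, i.e.\ a left inverse for $T$. Symmetrically, ${}_\Lambda T$ perfect gives an isomorphism $T\Lotimes_\Lambda\RHom_{\Lambda^\op}(T,\Lambda)\cong\RHom_{\Lambda^\op}(T,T)$, and the right-multiplication map turns it into $T\Lotimes_\Lambda\RHom_{\Lambda^\op}(T,\Lambda)\cong\Lambda$, a right inverse. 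In the monoidal category $(\Dcal(\Lambda^e),\Lotimes_\Lambda,\Lambda)$ an object with both a left and a right inverse is invertible and the two inverses coincide, so $T$ is a two-sided tilting complex with $\widetilde T\cong\RHom_\Lambda(T,\Lambda)\cong\RHom_{\Lambda^\op}(T,\Lambda)$.

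The hard part will be the two tensor-evaluation isomorphisms, and specifically checking that they are compatible with the prescribed multiplication maps and are isomorphisms of bimodules rather than merely of one-sided modules. Concretely I would represent $T$ by a bounded complex of $\Lambda$-bimodules that are finitely generated projective on each side; for such a complex the duals $\RHom_\Lambda(T,\Lambda)$ and $\RHom_{\Lambda^\op}(T,\Lambda)$ are computed termwise, the evaluation maps are the obvious ones, and one verifies by a direct diagram chase that the composites above are exactly the left- and right-multiplication maps. Once the two one-sided statements are in place the monoidal merging is formal, and the restriction of the resulting equivalence to $\Dcal(\Lambda)=\Dcal^b(\md\Lambda)$ is automatic since $T$ and its inverse are biperfect.
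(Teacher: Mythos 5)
The paper does not actually prove this statement---it imports it verbatim from \cite[Proposition~1.8]{Miyachi:03}---so the only meaningful comparison is with the standard literature argument, which is exactly the route you take: perfectness of $T$ on each side gives tensor-evaluation isomorphisms, the two multiplication maps upgrade these to one-sided inverses of $T$, and a formal argument in the monoidal category of bimodule complexes merges a left inverse and a right inverse into a two-sided one. Your forward direction (transport of $\RHom_\Lambda(\Lambda,\Lambda)$ along the equivalence $-\Lotimes_\Lambda T$, then checking the resulting map is left multiplication and that isomorphisms in $\Dcal(\Lambda^e)$ are detected on cohomology) is also the standard one and is fine.

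In the converse, however, your two evaluation morphisms are written with the tensor factor on the wrong side, and as stated they do not exist. With the usual bimodule structures, the natural evaluation map attached to the right-module dual is $T \Lotimes_\Lambda \RHom_\Lambda(T,\Lambda) \to \RHom_\Lambda(T,T)$, induced by $t' \otimes f \mapsto (t \mapsto t'\,f(t))$; the formula forced on your ordering, $f \otimes t' \mapsto (t \mapsto f(t)\,t')$, fails to be right $\Lambda$-linear in $t$, and in fact the natural map out of $\RHom_\Lambda(T,\Lambda)\Lotimes_\Lambda T$ is the trace map to $\Lambda$, which is \emph{not} an isomorphism for general perfect $T$ (try $T = \Lambda^2$). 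Likewise, the left-module dual satisfies $\RHom_{\Lambda^\op}(T,\Lambda)\Lotimes_\Lambda T \cong \RHom_{\Lambda^\op}(T,T)$. The net effect is that your pairing is crossed: invertibility of the \emph{left}-multiplication map yields $T \Lotimes_\Lambda \RHom_\Lambda(T,\Lambda) \cong \Lambda$, i.e.\ a \emph{right} inverse, while invertibility of the \emph{right}-multiplication map yields $\RHom_{\Lambda^\op}(T,\Lambda) \Lotimes_\Lambda T \cong \Lambda$, i.e.\ a \emph{left} inverse---the opposite of your labels. Since you still obtain one inverse on each side, your formal merging step (an object of a monoidal category with a left and a right inverse is invertible, and the two inverses coincide) closes the proof unchanged, but the displayed isomorphisms themselves must be corrected. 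One further small caution: when $\gldim\Lambda = \infty$ the bifunctor $\Lotimes_\Lambda$ does not preserve $\Dcal^b(\md\Lambda^e)$, so the phrase ``$(\Dcal(\Lambda^e),\Lotimes_\Lambda,\Lambda)$ is a monoidal category'' should be read as a statement about the full subcategory of biperfect complexes, where all of your objects (including both duals of $T$) do live.
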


The next result by Happel and Unger and its corollary will be crucial.

\begin{theorem}
 [{\cite[Theorem~2.2]{Happel.Unger:05*1}}]
 \label{theorem:happel-unger}
 For all modules $T, T' \in \tilt\Lambda$ with $T' > T$ there exists an arrow $T' \to T''$ in $Q(\tilt\Lambda)$ with $T'' \geq T$.
\end{theorem}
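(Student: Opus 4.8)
The plan is to produce the required vertex $T''$ as a \emph{maximal element} of the set $\Ccal := \{\, S \in \tilt\Lambda \mid T \le S < T' \,\}$ and then to observe that any such maximal element is automatically a lower cover of $T'$. Note first that $\Ccal$ is nonempty, since the hypothesis $T' > T$ gives $T \in \Ccal$. Suppose a maximal element $T'' \in \Ccal$ has been found. Then $T'' \ge T$ and $T' > T''$ hold by construction, so the only thing to check is that there is an arrow $T' \to T''$ in $Q(\tilt\Lambda)$, i.e.\ that no tilting module lies strictly between the two. If some $U \in \tilt\Lambda$ satisfied $T'' < U < T'$, then $U \ge T'' \ge T$ would force $U \in \Ccal$ with $U > T''$, contradicting the maximality of $T''$. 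Hence $T''$ is a lower cover of $T'$ and the desired arrow exists. This reduces the entire statement to the \textbf{existence of a maximal element of $\Ccal$}, which is where the substantive content lies.

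To attack existence I would translate the order into perpendicular categories via \cref{remark:tilting-order}. The assignment $S \mapsto S^\perp$ is order-preserving and injective on $\tilt\Lambda$ — the subcategory $\add S$ is recovered from $S^\perp$ as its $\Ext$-projective objects — so $\Ccal$ is carried order-isomorphically onto the family of subcategories $\Dcal$ with $T^\perp \subseteq \Dcal \subsetneq T'^\perp$. A maximal element of $\Ccal$ now corresponds to a subcategory maximal subject to being strictly contained in $T'^\perp$, and producing one amounts to showing that strictly ascending chains in this family, bounded above by $T'^\perp$, must terminate. Here I would bring in tilting mutation: a lower cover of a tilting module is obtained by exchanging a single indecomposable summand through a minimal approximation (exchange) triangle, so each upward step of such a chain alters exactly one indecomposable summand. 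Tracking an invariant that increases at every such step — built, say, from the number of indecomposable summands shared with $T'$, or from a suitable homological length — one aims to show that only finitely many steps separate any $S < T'$ from $T'$, which yields the required maximum condition.

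The main obstacle is exactly this termination: Zorn's lemma does not apply directly, since an ascending chain in $\Ccal$ could a priori admit $T'$ as its only upper bound, so pure order theory is insufficient and the exchange theory of tilting modules must be used in an essential way. A variant that bypasses chains altogether would be to construct the cover $T''$ directly: choose a suitable indecomposable summand $X$ of $T'$ reflecting the strict inclusion $T^\perp \subsetneq T'^\perp$, form the minimal exchange triangle for $X$ relative to the complementary summands, and verify that the resulting mutation is again a tilting module, still satisfies $\, {} \ge T$, and is a cover of $T'$. In either route the difficulty is concentrated in the same place, namely in controlling the mutation so that the new tilting module does not drop below $T$, and this is the step I expect to require the most care.
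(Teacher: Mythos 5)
The paper does not prove this statement at all: it is imported verbatim from Happel--Unger (the cited Theorem~2.2 of \cite{Happel.Unger:05*1}), so your argument has to stand entirely on its own, and it does not. The easy half of your proposal is correct: if the set $\Ccal := \{\, S \in \tilt\Lambda \mid T \le S < T' \,\}$ has a maximal element $T''$, then $T''$ is automatically a lower cover of $T'$ with $T'' \geq T$, exactly as you argue. But producing a maximal element is the \emph{entire} content of the theorem, and the route you propose for it --- showing that strictly ascending chains in $\Ccal$ bounded above by $T'$ must terminate --- rests on a claim that is false for a general finite-dimensional algebra. Take $\Lambda$ to be the Kronecker algebra, $T = D(\Lambda)$ and $T' = \Lambda$, so that $\Ccal$ consists of all tilting modules other than $\Lambda$. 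Writing $Q_j$ for the indecomposable preinjective modules (with $Q_0, Q_1$ the injectives) and $P_i$ for the indecomposable preprojectives, the preinjective tilting modules form an infinite strictly ascending chain $Q_0 \oplus Q_1 < Q_1 \oplus Q_2 < Q_2 \oplus Q_3 < \cdots$ inside $\Ccal$: over a hereditary algebra $S^\perp = \operatorname{Gen}(S)$, and these subcategories grow strictly with $j$ while all of them are contained in $\operatorname{Gen}(P_i \oplus P_{i+1})$ for every $i$. This chain is bounded above by $T'$ (indeed by every preprojective tilting module) and never terminates. Your suggested invariant fails on the same example: every term of the chain shares zero indecomposable summands with $T' = \Lambda$, so "number of summands shared with $T'$" does not increase along it. Maximal elements of $\Ccal$ do exist here ($P_1 \oplus P_2$ is one), but not for any chain-theoretic reason; and note that you also cannot appeal to finiteness of $\tilt\Lambda$, since in the paper this theorem is precisely the tool (via \cref{corollary:finite-successor-closed-tilting-subquiver}) used to establish that finiteness, and the statement is for arbitrary $\Lambda$.

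Your fallback sketch --- choose an indecomposable summand $X$ of $T'$ reflecting $T^\perp \subsetneq T'^\perp$, form the minimal exchange sequence with respect to $\add(T'/X)$, and verify that the mutation is tilting, is a lower cover of $T'$, and stays above $T$ --- is indeed the shape of Happel--Unger's actual argument; a natural choice is an indecomposable summand $X \notin T^\perp$, which exists because $T' \in T^\perp$ would give $T \geq T'$ by \cref{remark:tilting-order}, contradicting $T' > T$. But you explicitly leave all the verifications open ("the step I expect to require the most care"), and those verifications are where the theorem lives: one must show the minimal left $\add(T'/X)$-approximation of $X$ is injective, that its cokernel is an indecomposable complement making $T''$ tilting, and that $T'' \geq T$, i.e.\ $T^\perp \subseteq T''^\perp$. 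As it stands, the proposal proves only the elementary order-theoretic reduction and replaces the substance of the theorem first by a false general claim and then by an unexecuted plan, so it cannot be accepted as a proof.
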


A subquiver $Q'$ of a quiver $Q$ is \emph{successor-closed} if ($v \in Q' \Rightarrow w \in Q'$) for all arrows $v \to w$ in $Q$.

\begin{corollary}
 \label{corollary:finite-successor-closed-tilting-subquiver}
 If $Q'$ is a finite successor-closed subquiver of $Q(\tilt\Lambda)$ with $\Lambda \in Q'$, then $Q' = Q(\tilt\Lambda)$.
\end{corollary}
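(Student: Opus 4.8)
The plan is to show that a finite successor-closed subquiver $Q'$ containing $\Lambda$ must already contain every vertex of $Q(\tilt\Lambda)$, by ruling out the existence of any tilting module outside $Q'$ that sits \emph{just above} $Q'$. The key observation is that $\Lambda$ is the maximum element of the poset $\tilt\Lambda$: indeed, for any $T \in \tilt\Lambda$ we have $\Hom_{\Dcal(\Lambda)}(\Lambda, T[q]) = \Ext^q_\Lambda(\Lambda, T) = 0$ for all $q > 0$ since $\Lambda$ is projective, so $\Lambda \geq T$ for every tilting module $T$. Thus every tilting module is reachable from $\Lambda$ by going \emph{downward} in the poset.

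First I would argue by contradiction: suppose $Q' \neq Q(\tilt\Lambda)$, so the complement is nonempty. Since $\Lambda \in Q'$ is the top element and $Q'$ is finite, I want to locate a tilting module $T$ lying outside $Q'$ that is \enquote{minimally far} from $Q'$ in the sense that some module $T' \in Q'$ satisfies $T' > T$. To produce such a pair, I would consider the set of modules in $Q'$ that strictly dominate some module outside $Q'$; this set is nonempty (because $\Lambda$ dominates everything, in particular any fixed module outside $Q'$) and finite, so among all such strict dominations $T' > T$ with $T' \in Q'$ and $T \notin Q'$, I can choose one with $T$ maximal in the poset among all modules outside $Q'$ that are dominated by some element of $Q'$.

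The crux is then to invoke \cref{theorem:happel-unger}: given $T' > T$ with $T' \in Q'$ and $T \notin Q'$, there is an arrow $T' \to T''$ in $Q(\tilt\Lambda)$ with $T'' \geq T$. Since $Q'$ is successor-closed and $T' \in Q'$, the target $T''$ lies in $Q'$ as well. Now $T'' \geq T$, and $T'' \in Q'$ while $T \notin Q'$, so $T'' \neq T$, forcing $T'' > T$. But then $T''$ is an element of $Q'$ strictly dominating the module $T$ outside $Q'$; if $T''$ still strictly dominates some vertex outside $Q'$ we would need to continue, so the maximality-based bookkeeping must be set up to derive a contradiction directly. The cleanest route is to take $T$ to be a \emph{maximal} element (with respect to the poset order) of the nonempty finite set $Q(\tilt\Lambda) \setminus Q'$: since $\Lambda \in Q'$ dominates $T$, the interval from $T$ up to $\Lambda$ meets $Q'$, so there is some $T' > T$ with $T' \in Q'$; applying the theorem yields an arrow $T' \to T''$ with $T'' \geq T$ and $T'' \in Q'$, hence $T'' > T$, and by maximality of $T$ the strictly larger module $T''$ lies in $Q'$ — which is consistent but does not yet contradict anything, so the maximality must instead be used to show $T$ has no proper successor outside $Q'$ and then that an arrow into $T$ from inside $Q'$ forces $T \in Q'$.

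I expect the main obstacle to be arranging the inductive or extremal argument so that the successor-closure hypothesis bites in the right direction. The subtlety is that \cref{theorem:happel-unger} provides an arrow $T' \to T''$ that \emph{leaves} $T'$ going downward but only guarantees $T'' \geq T$, not $T'' = T$; to close the loop I would instead induct on the (finite) length of a maximal chain in $Q'$ from $\Lambda$ down to the boundary, repeatedly applying the theorem to descend through $Q'$ while staying $\geq T$, until the descent is forced to reach $T$ itself, at which point $T \in Q'$ contradicts $T \notin Q'$. Formalizing this descent — verifying it terminates and genuinely arrives at $T$ rather than overshooting — is the one place requiring care; everything else is bookkeeping with the poset structure and the successor-closedness of $Q'$.
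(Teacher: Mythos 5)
Your final paragraph lands on the right argument, and it is in substance the paper's own: the paper proves this corollary by citing the proof of Happel--Unger's Corollary~2.2, which is exactly the descent you describe, driven by \cref{theorem:happel-unger}. However, the two extremal detours you take on the way contain a genuine error. You propose to choose $T$ maximal among elements outside $Q'$ (or maximal among such elements dominated by $Q'$), and at one point you call $Q(\tilt\Lambda) \setminus Q'$ a \enquote{nonempty finite set}. Only $Q'$ is assumed finite; the finiteness of $Q(\tilt\Lambda)$ is precisely what the corollary is being used to establish (it is the engine behind \cref{lemma:tilting-quiver}), so this begs the question. An a priori infinite poset need not have maximal elements at all, so neither extremal choice is legitimate, and indeed you notice yourself that the maximality yields no contradiction.

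The fix is that no extremal choice is needed, and your \enquote{overshooting} worry evaporates on inspection. Fix any $T \notin Q'$. Since $\Lambda$ is projective, $\Lambda \geq T$, and $T \neq \Lambda$ because $\Lambda \in Q'$, so $\Lambda > T$. Apply \cref{theorem:happel-unger} to $\Lambda > T$: there is an arrow $\Lambda \to T_1$ in $Q(\tilt\Lambda)$ with $T_1 \geq T$, and successor-closedness puts $T_1 \in Q'$. If $T_1 \neq T$, then $T_1 > T$ and the theorem applies again; iterating produces a path $\Lambda \to T_1 \to T_2 \to \cdots$ inside $Q'$ with every $T_k \geq T$. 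Arrows in the Hasse quiver are strict descents, so the $T_k$ are pairwise distinct elements of the finite set $Q'$, and the iteration must halt; but it can only halt when $T_k = T$, since whenever $T_k > T$ the theorem supplies a further arrow (no step can get stuck strictly above $T$, and no step can pass below $T$ because each stays $\geq T$). Thus $T = T_k \in Q'$, a contradiction. This is the whole proof; the bookkeeping you were unsure how to arrange reduces to the single observation that a strictly decreasing sequence in a finite set terminates.
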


\begin{proof}
 This follows from the proof of \cite[Corollary~2.2]{Happel.Unger:05}.
\end{proof}

If $\Lambda$ has finite global dimension, the concept of tilting modules coincides with the dual concept of cotilting modules (see \cite[Lemma~1.3]{Happel.Unger:96}).
We continue with some results that are valid in this situation.

\begin{lemma}
 \label{lemma:tilting-cotilting-duality}
 Assume $\gldim\Lambda < \infty$.
 There is a poset isomorphism
 \[
  \begin{tikzcd}[row sep = 0.1cm]
   \tilt\Lambda^\op \ar[r] & (\tilt\Lambda)^\op
   \\
   \pad[1.3em]{T} \ar[r, mapsto] & \pad[0.4em]{D(T)}\hspace{1em}
  \end{tikzcd}
 \]
 induced by the standard duality $D := \Hom_K(-,K) : \md\Lambda^\op \to \md\Lambda$.
\end{lemma}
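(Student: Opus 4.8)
\textit{Proof proposal.} The plan is to upgrade the standard duality $D$ to a contravariant triangulated equivalence on derived categories and then check that it transports the tilting condition and reverses the tilting order. First I would record that, since $D = \Hom_K(-,K)$ is an exact $K$-linear duality $\md\Lambda^\op \to \md\Lambda$ with $D^2 \cong \id$, applying it termwise produces a contravariant triangulated equivalence (still written $D$) between $\Dcal(\Lambda^\op)$ and $\Dcal(\Lambda)$ with $D(X[q]) \cong (DX)[-q]$ and the natural duality isomorphism
\[
 \Hom_{\Dcal(\Lambda)}(DX, DY) \;\cong\; \Hom_{\Dcal(\Lambda^\op)}(Y, X).
\]
Because $\gldim\Lambda < \infty$, every module has finite projective and finite injective dimension, so every object of $\Dcal(\Lambda)$ and of $\Dcal(\Lambda^\op)$ is perfect, i.e.\ $\Kcal(\Lambda) = \Dcal(\Lambda)$. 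Since $D$ carries projectives to injectives and injectives are perfect in this situation, $D$ restricts to a duality between $\Kcal(\Lambda^\op)$ and $\Kcal(\Lambda)$.

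Next I would show that $D$ sends tilting complexes to tilting complexes. For a tilting complex $T \in \Kcal(\Lambda^\op)$ the duality isomorphism gives $\Hom_{\Dcal(\Lambda)}(DT, DT[q]) \cong \Hom_{\Dcal(\Lambda^\op)}(T, T[-q])$, so the rigidity of $DT$ is equivalent to that of $T$; and as a triangulated duality carries thick subcategories to thick subcategories, $\thick(T) = \Kcal(\Lambda^\op)$ forces $\thick(DT) = \Kcal(\Lambda)$. Since $D$ is exact on module categories it takes a module in degree $0$ to a module in degree $0$, so if $T \in \tilt\Lambda^\op$, writing $T \cong P$ in $\Dcal(\Lambda^\op)$ for a tilting complex $P$, the module $DT \cong DP$ is again tilting and hence $DT \in \tilt\Lambda$. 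The relation $D^2 \cong \id$ shows $T \mapsto DT$ is a bijection between $\tilt\Lambda^\op$ and $\tilt\Lambda$.

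Finally, to see the map reverses the order, I would feed the duality isomorphism into the defining relation of the tilting order (see \cref{remark:tilting-order}). For $T, T' \in \tilt\Lambda^\op$ and every $q$ one computes, using $D(T'[q]) = (DT')[-q]$ and a shift,
\[
 \Hom_{\Dcal(\Lambda^\op)}(T, T'[q]) \;\cong\; \Hom_{\Dcal(\Lambda)}(DT', DT[q]).
\]
Thus $T \geq T'$, meaning the left-hand side vanishes for all $q > 0$, holds if and only if $DT' \geq DT$ in $\tilt\Lambda$, that is $DT \geq DT'$ in $(\tilt\Lambda)^\op$. This is precisely the assertion that $T \mapsto D(T)$ is an isomorphism of posets. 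The one step requiring genuine care is the stability of $D$ on the relevant categories: this is where $\gldim\Lambda < \infty$ is used, guaranteeing that the dual of a perfect complex is perfect so that the tilting objects are preserved; everything else is formal once the derived duality and its Hom-isomorphism are in place.
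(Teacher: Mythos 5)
Your proof is correct, but it takes a genuinely different route from the paper's. The paper disposes of the lemma in one line by invoking the sentence that precedes it: when $\gldim\Lambda<\infty$, tilting and cotilting modules coincide (\cite[Lemma~1.3]{Happel.Unger:96}); since $D(T)$ is a cotilting $\Lambda$-module essentially by definition of cotilting, it is tilting, and the order reversal is declared clear. You instead work entirely at the derived level with the paper's Rickard-style definition of tilting: you extend $D$ to a contravariant triangulated duality between $\Dcal(\Lambda^\op)$ and $\Dcal(\Lambda)$, use finite global dimension to make every object perfect --- this is exactly what lets the generation condition $\thick(T)=\Kcal(\Lambda^\op)$ transport across $D$ --- and read off the order reversal from $\Hom_{\Dcal(\Lambda^\op)}(T,T'[q])\cong\Hom_{\Dcal(\Lambda)}(DT',DT[q])$, which you derive correctly. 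What each approach buys: yours is self-contained relative to the paper's definitions (no cotilting modules, no appeal to the module-theoretic characterization of tilting modules, no external citation), and it makes the ``clearly an isomorphism of posets'' step explicit; the paper's is far shorter because Happel--Unger carry the real weight. Two cosmetic slips in your write-up, neither affecting correctness: with your stated conventions the rigidity computation gives $\Hom_{\Dcal(\Lambda)}(DT,DT[q])\cong\Hom_{\Dcal(\Lambda^\op)}(T,T[q])$, not $\Hom_{\Dcal(\Lambda^\op)}(T,T[-q])$ (immaterial, since the range $q\neq0$ is stable under $q\mapsto-q$), and ``$\Kcal(\Lambda)=\Dcal(\Lambda)$'' should be read as ``every object of $\Dcal(\Lambda)$ is isomorphic to an object of $\Kcal(\Lambda)$'', as the paper regards $\Kcal(\Lambda)$ as a full subcategory rather than an isomorphism-closed one.
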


\begin{proof}
 Indeed, $D(T) \in \tilt\Lambda$ for every $T \in \tilt\Lambda^\op$ because $\Lambda$ has finite global dimension.
 The map clearly is an isomorphism of posets.
\end{proof}

We denote by $\fres\Lambda$ and $\fcores\Lambda$ the posets of functorially finite subcategories of $\md\Lambda$ that are resolving and coresolving, respectively.
The poset structure is given by inclusion (see \cite{Auslander.Reiten:91} for definitions).
All subcategories are assumed to be closed under direct summands.
Moreover, we abbreviate ${}^\perp Y := \{ X \in \md\Lambda \,|\, \Ext^q_\Lambda(X,Y) = 0 \:\: \forall\: q > 0 \}$.
The following characterization of tilting modules is often useful:

\begin{theorem}
[{\cite{Auslander.Reiten:91}, \cite[Corollary~0.3]{Krause.Solberg:03}}]
 \label{theorem:auslander-reiten}
 Assume $\gldim\Lambda < \infty$.
 There is a commutative diagram of poset isomorphisms:
 \[
  \begin{tikzcd}
   &
   \tilt\Lambda
   \ar[dl, "T \mapsto T^\perp"']
   \ar[dr, "T \mapsto {}^\perp (T^\perp)"]
   \\
   \fcores\Lambda
   \ar[rr]
   &&
   (\fres\Lambda)^\op
  \end{tikzcd}
 \]
 Moreover, $\add(T) = T^\perp \cap {}^\perp (T^\perp)$ for every $T \in \tilt\Lambda$.
\end{theorem}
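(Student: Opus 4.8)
The plan is to treat the left-hand map $T \mapsto T^\perp$ as the primary object and to derive everything else from the theory of hereditary cotorsion pairs. The bottom arrow $\fcores\Lambda \to (\fres\Lambda)^\op$ is the assignment $\Ycal \mapsto {}^\perp\Ycal$, which by Krause--Solberg is an inclusion-reversing bijection whose inverse sends a resolving subcategory $\Xcal$ to $\Xcal^\perp$; with this identification the triangle commutes tautologically, since ${}^\perp(T^\perp)$ is by definition the image of $T^\perp$ under the bottom map. It therefore suffices to show that $T \mapsto T^\perp$ is an order isomorphism $\tilt\Lambda \to \fcores\Lambda$, with $\fcores\Lambda$ ordered by inclusion, together with the identity $\add(T) = T^\perp \cap {}^\perp(T^\perp)$.

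First I would check the map is well defined. That $T^\perp$ is coresolving is a direct diagram chase: it contains the injectives and, from the long exact $\Ext$-sequence, is closed under extensions and under cokernels of monomorphisms between its objects. Functorial finiteness is where the hypothesis that $T$ is tilting is used: from a finite $\add(T)$-presentation of an arbitrary module one builds right $T^\perp$-approximations, and the assumption $\gldim\Lambda < \infty$ guarantees that the relevant coresolutions are finite. Hence $T^\perp \in \fcores\Lambda$.

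For bijectivity I would reconstruct $T$ from $\Ycal := T^\perp$. The easy inclusion $\add(T) \subseteq T^\perp \cap {}^\perp(T^\perp)$ is formal: $T$ is self-orthogonal, so $T \in T^\perp$, and $T \in {}^\perp(T^\perp)$ holds by the very definition of $T^\perp$. The reverse inclusion identifies $T^\perp \cap {}^\perp(T^\perp)$ with the objects of $\Ycal$ that are simultaneously relatively $\Ext$-injective and relatively $\Ext$-projective, and in a tilting cotorsion pair these are exactly the summands of $T$; this pins down the additive generator of $\Ycal \cap {}^\perp\Ycal$ and yields $\add(T) = T^\perp \cap {}^\perp(T^\perp)$, which in particular gives injectivity.

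The order statement is immediate from \cref{remark:tilting-order}: one has $T \geq T'$ if and only if $T^\perp \supseteq T'^\perp$, so $T \mapsto T^\perp$ is simultaneously order preserving and order reflecting onto its image, and composing with the inclusion-reversing bottom bijection produces an order isomorphism onto $(\fres\Lambda)^\op$. The main obstacle is surjectivity of $T \mapsto T^\perp$: given an arbitrary functorially finite coresolving $\Ycal$, one must verify that the additive generator $T$ of $\Ycal \cap {}^\perp\Ycal$ is genuinely a tilting module --- that it is self-orthogonal, has finite projective dimension, and that $\Lambda$ admits a finite coresolution by $\add(T)$, ensuring $\Lambda \in \thick(T)$. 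Assembling these coresolutions from the functorial finiteness of $\Ycal$ and forcing termination via $\gldim\Lambda < \infty$ is the technical heart of the Auslander--Reiten correspondence, and is where I expect the real work to lie.
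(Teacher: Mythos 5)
The first thing to say is that the paper does not prove this statement at all: \cref{theorem:auslander-reiten} is imported as a black box, with the correspondence itself attributed to \cite{Auslander.Reiten:91} and the upgrade from one-sided to functorial finiteness attributed to \cite[Corollary~0.3]{Krause.Solberg:03}. So there is no internal argument to compare yours against; the only benchmark is whether your sketch would actually reconstruct those cited results. Structurally, your outline follows the standard route: taking the bottom arrow to be $\Ycal \mapsto {}^\perp\Ycal$ does make commutativity tautological, the order statement does follow from \cref{remark:tilting-order}, and injectivity does follow from the identity $\add(T) = T^\perp \cap {}^\perp(T^\perp)$.

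The gap is that the two steps carrying essentially all of the mathematical weight are deferred rather than proved, and they are exactly the content of the cited theorems. First, the inclusion $T^\perp \cap {}^\perp(T^\perp) \subseteq \add(T)$ is not formal, and your justification (``in a tilting cotorsion pair these are exactly the summands of $T$'') is a restatement of the claim, not an argument; the genuine proof runs through the Auslander--Buchweitz-type special approximation sequences attached to the cotorsion pair $({}^\perp(T^\perp), T^\perp)$ together with Wakamatsu's lemma. Second, surjectivity --- that for every functorially finite coresolving $\Ycal$ the additive generator of $\Ycal \cap {}^\perp\Ycal$ is a tilting module --- is the Auslander--Reiten correspondence itself, and you explicitly leave it open. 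Two smaller inaccuracies: what the tilting hypothesis hands you directly is \emph{left} $T^\perp$-approximations (built from the coresolution $0 \to \Lambda \to T^0 \to \cdots \to T^r \to 0$ with $T^q \in \add(T)$), i.e.\ covariant finiteness, whereas you speak of right approximations; contravariant finiteness of $T^\perp$ is not constructed by hand but is precisely what \cite[Corollary~0.3]{Krause.Solberg:03} supplies. And the real role of $\gldim\Lambda < \infty$ is to make the extra hypothesis in the general correspondence of \cite{Auslander.Reiten:91} --- that every module admit a finite $\Ycal$-coresolution --- automatic, since finite injective coresolutions exist. In short: the skeleton is right and matches the literature the paper leans on, but if you allow yourself to cite those references the proof reduces to the paper's own citation, and if you do not, the core of the argument is missing.
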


Still assuming $\gldim\Lambda < \infty$, we will end this background section with the remarkable observation that, whenever $\tilt\Lambda$ is finite, every self-orthogonal $\Lambda$-module is partial tilting and $\tilt\Lambda$ forms a lattice.
This is a consequence of the following recent result by Iyama and Zhang:

\begin{theorem}
 [{\cite{Iyama.Zhang:18}}]
 \label{theorem:functorial-finiteness}
 If $\gldim\Lambda < \infty$ and $\tilt\Lambda$ is finite, all resolving and all coresolving subcategories of $\md\Lambda$ are functorially finite.
\end{theorem}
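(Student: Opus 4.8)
The plan is to adapt the strategy of Demonet, Iyama and Jasso for torsion classes to the resolving setting, using the tilting correspondence of \cref{theorem:auslander-reiten} as the bridge. First I would reduce to a single case. Under the standard duality $D \colon \md\Lambda^{\op} \to \md\Lambda$, coresolving subcategories of $\md\Lambda$ correspond bijectively to resolving subcategories of $\md\Lambda^{\op}$, and since $D$ interchanges right and left approximations it preserves functorial finiteness. As $\gldim\Lambda^{\op} = \gldim\Lambda < \infty$ and, by \cref{lemma:tilting-cotilting-duality}, $\tilt\Lambda^{\op} \cong (\tilt\Lambda)^{\op}$ is again finite, it suffices to prove the following statement: if $\gldim\Lambda < \infty$ and $\tilt\Lambda$ is finite, then every resolving subcategory of $\md\Lambda$ is functorially finite. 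Applying this to $\Lambda$ and to $\Lambda^{\op}$ then yields both halves of the theorem.

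Next I would locate the functorially finite resolving subcategories among all of them. By \cref{theorem:auslander-reiten} they are exactly the members of $\fres\Lambda \cong (\tilt\Lambda)^{\op}$, so by hypothesis there are only finitely many. Because $\gldim\Lambda < \infty$, every module has finite projective dimension and $\proj\Lambda$ is contained in every resolving subcategory $\Ccal$; hence every module admits a finite $\Ccal$-resolution, and for resolving subcategories contravariant finiteness already forces functorial finiteness through the cotorsion-pair results of Krause and Solberg \cite{Krause.Solberg:03}. I may therefore concentrate on contravariant finiteness.

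The heart of the argument is the assertion that every resolving subcategory $\Ccal$ is the directed union of the functorially finite resolving subcategories it contains. The union identity itself is formal: $\Ccal = \bigcup_{M \in \Ccal} \operatorname{res}(M)$, where $\operatorname{res}(M)$ denotes the resolving closure of $M$, and this family is directed because $\operatorname{res}(M) \cup \operatorname{res}(N) \subseteq \operatorname{res}(M \oplus N)$. Granting that each such generated piece is functorially finite, the finiteness of $\fres\Lambda$ means only finitely many distinct subcategories $\operatorname{res}(M)$ occur, so the directed family has a largest member $\operatorname{res}(M_0) \subseteq \Ccal$, and the union identity then forces $\Ccal = \operatorname{res}(M_0)$, which is functorially finite. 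The main obstacle is thus to show that the resolving closure of a single module is functorially finite; unlike the torsion-class situation, where the analogous generated subcategory is automatically functorially finite, here this is precisely the point at which the finiteness of $\tilt\Lambda$ must enter. I expect it to be obtained by a contradiction in the spirit of Demonet, Iyama and Jasso: if some $\operatorname{res}(M)$ — equivalently, some resolving subcategory — failed to be functorially finite, one would extract from it an infinite family of pairwise distinct functorially finite resolving subcategories, contradicting $\lvert \fres\Lambda \rvert = \lvert \tilt\Lambda \rvert < \infty$. Producing such an infinite family, using finite global dimension together with the bijection of \cref{theorem:auslander-reiten} to convert resolving data back into tilting modules, is the technical crux I would have to carry out in detail.
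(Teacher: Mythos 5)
Your preparatory reductions are sound: handling the coresolving case via the duality $D \colon \md\Lambda^\op \to \md\Lambda$ together with \cref{lemma:tilting-cotilting-duality}, using the Krause--Solberg cotorsion-pair result to upgrade contravariant finiteness of a resolving subcategory to functorial finiteness, and the formal decomposition $\Ccal = \bigcup_{M \in \Ccal} \operatorname{res}(M)$ of a resolving subcategory as a directed union of resolving closures of single modules. But the proposal stops exactly where the theorem begins. The one claim carrying all the content --- that $\operatorname{res}(M)$ is functorially finite for every $M$, or in your contrapositive form, that a resolving subcategory failing to be functorially finite produces infinitely many functorially finite resolving subcategories and hence, via \cref{theorem:auslander-reiten}, infinitely many tilting modules --- is explicitly deferred as ``the technical crux I would have to carry out in detail.'' Note that, granted the rest of your argument, this deferred claim is equivalent to the theorem itself, so the reduction does not genuinely lower the difficulty; it merely restricts attention to resolving subcategories generated by one module. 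No candidate approximations are constructed, no infinite family of tilting modules is exhibited, and no mechanism is indicated for converting the finiteness of $\tilt\Lambda$ into the existence of right $\Ccal$-approximations; the appeal to the ``spirit'' of Demonet--Iyama--Jasso is a hope, not an argument, and as you yourself concede, the torsion-class analogy breaks down at precisely this point. This is a genuine gap, and it is the entire theorem.

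For comparison: the paper does not prove this crux from scratch either, because the statement is itself attributed to Iyama and Zhang. Its proof is a two-line deduction: the implication (1)~$\Rightarrow$~(2) of \cite[Theorem~3.2]{Iyama.Zhang:18} gives that every resolving subcategory is contravariantly finite when $\gldim\Lambda < \infty$ and $\tilt\Lambda$ is finite, \cite[Corollary~0.3]{Krause.Solberg:03} upgrades this to functorial finiteness, and the coresolving half follows dually from the fact that tilting and cotilting modules coincide in finite global dimension (your $\Lambda^\op$-duality reduction is a perfectly good substitute for that last step). So the step you leave open is exactly the content of the cited Iyama--Zhang theorem: your proposal reduces the statement to that result rather than proving it.
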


\begin{proof}
 The implication (1)~$\Rightarrow$~(2) of \cite[Theorem~3.2]{Iyama.Zhang:18} and \cite[Corollary~0.3]{Krause.Solberg:03} show that every resolving subcategory is functorially finite.
 Dually, since all cotilting modules are also tilting modules, every coresolving subcategory is functorially finite.
\end{proof}

Recall that $M \in \md\Lambda$ is \emph{self-orthogonal} if $\Ext^q_\Lambda(M,M) = 0$ $\forall\: q > 0$.

\begin{corollary}
 \label{corollary:self-orthogonals-are-partial-tilting}
 If $\gldim\Lambda < \infty$ and $\tilt\Lambda$ is finite, every self-ortho\-gonal module in $\md\Lambda$ is a direct summand of a tilting module.
\end{corollary}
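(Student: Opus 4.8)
The plan is to realise a perpendicular category attached to the given self-orthogonal module as $T^\perp$ for some tilting module $T$, and then to read off the desired summand relation from the identity $\add(T) = T^\perp \cap {}^\perp(T^\perp)$ supplied by \cref{theorem:auslander-reiten}.

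First I would fix a self-orthogonal module $M \in \md\Lambda$ and examine the subcategory $M^\perp$. Applying the long exact sequences in $\Ext^q_\Lambda(M,-)$ to a short exact sequence shows that $M^\perp$ is closed under extensions and under cokernels of monomorphisms, and it contains every injective module since $\Ext^q_\Lambda(M,I) = 0$ for $q > 0$. Thus $M^\perp$ is coresolving, and being closed under direct summands it falls under \cref{theorem:functorial-finiteness}: the finiteness of $\tilt\Lambda$ forces $M^\perp$ to be functorially finite, so $M^\perp \in \fcores\Lambda$. The poset isomorphism $T \mapsto T^\perp$ from $\tilt\Lambda$ to $\fcores\Lambda$ in \cref{theorem:auslander-reiten} is in particular surjective, hence there is a tilting module $T$ with $T^\perp = M^\perp$.

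It then remains to place $M$ in $\add(T)$. By the final assertion of \cref{theorem:auslander-reiten} we have $\add(T) = T^\perp \cap {}^\perp(T^\perp) = M^\perp \cap {}^\perp(M^\perp)$. Here $M \in M^\perp$ precisely because $M$ is self-orthogonal, while $M \in {}^\perp(M^\perp)$ holds tautologically: any $Y \in M^\perp$ satisfies $\Ext^q_\Lambda(M,Y) = 0$ for all $q > 0$ by the very definition of $M^\perp$. Hence $M \in \add(T)$, so $M$ is a direct summand of the tilting module $T$, as desired.

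There is no genuine obstacle once \cref{theorem:functorial-finiteness,theorem:auslander-reiten} are available; the only care needed is in checking that $M^\perp$ is coresolving and in noticing that the second membership $M \in {}^\perp(M^\perp)$ is automatic. The essential input is \cref{theorem:functorial-finiteness}, which is what converts the hypothesis that $\tilt\Lambda$ is finite into the functorial finiteness required to recognise $M^\perp$ as lying in the image of $T \mapsto T^\perp$.
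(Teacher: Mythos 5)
Your proof is correct and follows essentially the same route as the paper: the paper's entire proof is to invoke \cref{theorem:functorial-finiteness} together with the dual of \cite[Proposition~5.12]{Auslander.Reiten:91}, and your argument---checking that $M^\perp$ is coresolving, hence functorially finite by \cref{theorem:functorial-finiteness}, realising it as $T^\perp$ for a tilting module $T$ via the surjectivity in \cref{theorem:auslander-reiten}, and concluding $M \in M^\perp \cap {}^\perp(M^\perp) = T^\perp \cap {}^\perp(T^\perp) = \add(T)$---is precisely an unpacking of that cited result. Nothing is missing; you have simply made explicit, using results already stated in the paper, what the paper outsources to the literature.
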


\begin{proof}
 Use \cref{theorem:functorial-finiteness} and the dual of \cite[Proposition~5.12]{Auslander.Reiten:91}.
\end{proof}

\begin{corollary}
 \label{corollary:tilting-lattice}
 If $\gldim\Lambda < \infty$ and $\tilt\Lambda$ is finite, $\tilt\Lambda$ is a lattice.
 In this case, the meet $T \wedge T'$ of elements $T, T'$ in $\tilt\Lambda$ is given by
 \[
  (T \wedge T')^\perp
  \:=\:
  T^\perp \cap T'^\perp
  \,.
 \]
\end{corollary}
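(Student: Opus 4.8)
The plan is to transport the problem through the poset isomorphism $T \mapsto T^\perp$ from $\tilt\Lambda$ to $\fcores\Lambda$ supplied by \cref{theorem:auslander-reiten}, and to show that $\fcores\Lambda$ is closed under pairwise intersections once $\tilt\Lambda$ is finite. First I would record that $\tilt\Lambda$ has a greatest element, namely $\Lambda$ itself: since $\Lambda$ is projective we have $\Lambda^\perp = \md\Lambda$, so $\Lambda^\perp \supseteq T^\perp$ for every $T \in \tilt\Lambda$, whence $\Lambda \geq T$ by \cref{remark:tilting-order}.

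Next, given $T, T' \in \tilt\Lambda$, I would examine the full subcategory $T^\perp \cap T'^\perp$. Both $T^\perp$ and $T'^\perp$ lie in $\fcores\Lambda$, and the defining closure properties of a coresolving subcategory (containing the injectives, closed under extensions, under cokernels of monomorphisms, and under direct summands) are each preserved by intersection, so $T^\perp \cap T'^\perp$ is again coresolving. The step where the finiteness hypothesis enters, and the one I expect to carry the real weight, is functorial finiteness: by \cref{theorem:functorial-finiteness} every coresolving subcategory of $\md\Lambda$ is functorially finite under our assumptions, so $T^\perp \cap T'^\perp$ genuinely belongs to $\fcores\Lambda$.

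With this secured, the isomorphism of \cref{theorem:auslander-reiten} furnishes a unique $S \in \tilt\Lambda$ with $S^\perp = T^\perp \cap T'^\perp$, and I would check directly from \cref{remark:tilting-order} that $S = T \wedge T'$: the inclusions $S^\perp \subseteq T^\perp$ and $S^\perp \subseteq T'^\perp$ give $S \leq T$ and $S \leq T'$, while any common lower bound $U$ satisfies $U^\perp \subseteq T^\perp \cap T'^\perp = S^\perp$ and hence $U \leq S$. This simultaneously yields the existence of meets and the asserted formula $(T \wedge T')^\perp = T^\perp \cap T'^\perp$.

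Finally, to pass from \enquote{all binary meets exist} to \enquote{lattice}, I would invoke the elementary fact that a finite poset with a greatest element in which every pair has a meet is automatically a lattice: for $T, T'$ the set of their common upper bounds is finite and nonempty (it contains $\Lambda$), so its meet exists and is readily seen to be the join $T \vee T'$. The crux of the argument is therefore not the lattice-theoretic bookkeeping but establishing membership of $T^\perp \cap T'^\perp$ in $\fcores\Lambda$, which is exactly what finiteness of $\tilt\Lambda$ buys through \cref{theorem:functorial-finiteness}.
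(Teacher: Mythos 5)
Your proof is correct, and its central step --- producing the meet as the unique $S \in \tilt\Lambda$ with $S^\perp = T^\perp \cap T'^\perp$, using that intersections of coresolving subcategories are again coresolving and that \cref{theorem:functorial-finiteness} supplies the functorial finiteness needed for membership in $\fcores\Lambda$ --- is exactly the paper's. Where you genuinely diverge is in producing joins. The paper argues symmetrically on the dual side: intersections of \emph{resolving} subcategories are resolving and functorially finite, so $\fres\Lambda$ admits meets, and since \cref{theorem:auslander-reiten} identifies $\tilt\Lambda$ with $(\fres\Lambda)^\op$ via $T \mapsto {}^\perp(T^\perp)$, these become joins in $\tilt\Lambda$; this route yields an explicit companion description of the join, namely ${}^\perp\bigl((T \vee T')^\perp\bigr) = {}^\perp(T^\perp) \cap {}^\perp(T'^\perp)$, and uses finiteness of $\tilt\Lambda$ only through \cref{theorem:functorial-finiteness}. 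You instead obtain joins by pure poset combinatorics: $\tilt\Lambda$ is finite, has greatest element $\Lambda$ (your verification via $\Lambda^\perp = \md\Lambda$ and \cref{remark:tilting-order} is correct), and has binary meets, hence is a lattice, the join of $T, T'$ being the meet of their finite nonempty set of common upper bounds. This is more elementary and perfectly adequate, since finiteness is a standing hypothesis anyway, but it gives no homological description of $T \vee T'$, whereas the paper's two-sided argument does; conversely, your write-up makes explicit the closure and functorial-finiteness verifications that the paper's three-line proof leaves implicit.
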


\begin{proof}
 \Cref{theorem:functorial-finiteness} shows that $\fres\Lambda$ admits meets, since the intersection of resolving subcategories is resolving.
 Analogously, $\fcores\Lambda$ admits meets.
 Now apply \cref{theorem:auslander-reiten}.
\end{proof}

\section{The Auslander algebra\texorpdfstring{ of $K[x]/(x^n)$}{}}

From now on let $\Lambda = \Lambda_n$ be the path algebra over $K$ of the quiver
\[
 \begin{tikzcd}[column sep = 2cm]
  1
  \ar[r, yshift=-0.5ex, "\alpha_1"']
  &
  2
  \ar[l, yshift=0.5ex, "\beta_2"']
  \ar[r, yshift=-0.5ex, "\alpha_2"']
  &
  \ar[l, yshift=0.5ex, "\beta_3"']
  \ar[r, dash, dotted]
  &
  \ar[r, yshift=-0.5ex, "\alpha_{n-1}"']
  &
  n
  \ar[l, yshift=0.5ex, "\beta_{n\phantom{-1}}"']
 \end{tikzcd}
\]
modulo the relations $\alpha_1 \beta_2$ and $\alpha_i \beta_{i+1} - \beta_i \alpha_{i-1}$ for all $1 < i < n$.

\medskip

Then $\Lambda$ is the Auslander algebra of $K[x]/(x^n)$, i.e.\ the endomorphism algebra of the direct sum of the $n$ indecomposable $K[x]/(x^n)$-modules $K[x]/(x^i)$ with $1 \leq i \leq n$.
Its classical tilting, support $\tau$-tilting and exceptional modules were investigated in \cite{Brustle.Hille.Ringel.ea:99,Iyama.Zhang:16,Hille.Ploog:17}.

\begin{remark}
 Some basic properties of $\Lambda$ are collected in \cite[\S~1]{Hille.Ploog:17}.
 For us it is important to know that $\gldim\Lambda \leq 2$ and $e_n\Lambda$ is a projective-injective $\Lambda$-module.
\end{remark}

\section{Classical tilting modules}

The classical tilting modules for the Auslander algebra $\Lambda = \Lambda_n$ are classified in \cite{Brustle.Hille.Ringel.ea:99}.
An explicit anti-isomorphism between the poset $\tilt_1 \Lambda$ and the symmetric group $\Scal = \Scal_n$ on $n$ letters with the left weak order was established by Yusuke Tsujioka in his Master's thesis.
We will recall this classification as presented in \cite{Iyama.Zhang:16} below.

\medskip

For $1 \leq i < n$ we denote by $s_i \in \Scal$ the trans\-position of $i$ and $i+1$.
The \emph{length} of $w \in \Scal$ is
\[
  \ell(w)
  \::=\:
  \sharp\{(i,j) \,|\, 1 \leq i < j \leq n, w(i) > w(j) \}
  \,.
\]
A sequence $(i_1,\ldots,i_\ell)$ in $\{1,\ldots,n-1\}$ is said to be a \emph{reduced expression} for $w$ if $w = s_{i_1} \cdots s_{i_\ell}$ and $\ell = \ell(w)$.
The \emph{left weak order} $\geq_L$ and the \emph{right weak order} $\geq_R$ on $\Scal$ are defined by:
\[
  \begin{array}{lcl}
   w \geq_L v
   &:\Leftrightarrow&
   \ell(w) = \ell(v) + \ell(wv^{-1})
   \\ [0.2em]
   w \geq_R v
   &:\Leftrightarrow&
   \ell(w) = \ell(v) + \ell(v^{-1}w)
  \end{array}
\]
Each of these two orders turns $\Scal$ into a lattice with maximal element
\[
  w_0
  \::=\:
  \left(\begin{smallmatrix*}
   1 & 2 & \cdots & n-1 & n
   \\
   n & n-1 & \cdots & 2 & 1
  \end{smallmatrix*}\right)
  \:\in\:
  \Scal
  \,.
\]
Observe that the arrows in the Hasse diagram $Q(\Scal, \geq_L)$ are $s_i v \to v$ for $v \in \Scal$ and $1 \leq i < n$ with $s_i v >_L v$ and the assignment $w \mapsto w^{-1}$ yields an isomorphism of posets $(\Scal, \geq_R) \to (\Scal, \geq_L)$.

\medskip

For $1 \leq i < n$ let $I_i$ be the ideal $\Lambda (1-e_i) \Lambda$ in $\Lambda$.
More generally, we define for all $w \in \Scal$
\[
 I_w
 \::=\:
 I_{i_1} \cdots I_{i_\ell}
\]
where $\underline{i} = (i_1, \ldots, i_\ell)$ is any reduced expression for $w$.
For a proof why the ideal $I_w$ in $\Lambda$ only depends on $w$ and not on the particular choice of the reduced expression $\underline{i}$ see \cite[Proposition~3.15]{Iyama.Zhang:16}.

\medskip

\begin{theorem}
 [{\cite[Theorem~3.18]{Iyama.Zhang:16}}]
 \label{theorem:classical-tilting-poset}
 There are poset isomorphisms:
 \[
  \begin{tikzcd}[row sep = 0.1cm]
   (\Scal, \geq_L) \ar[r] & (\tilt_1 \Lambda)^\op
   &
   (\Scal, \geq_R) \ar[r] & (\tilt_1 \Lambda^\op)^\op
   \\
   \pad[1.5em]{w} \ar[r, mapsto] & \pad[1em]{I_w}\hspace{1.5em}
   &
   \pad[1.5em]{w} \ar[r, mapsto] & \pad[1.4em]{I_w}\hspace{1.5em}
  \end{tikzcd}
 \]
\end{theorem}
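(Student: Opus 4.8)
The plan is to prove the first isomorphism and to obtain the second by the left–right symmetric version of the same argument, applied to the left-module structure of the two-sided ideals $I_w$ (equivalently, over $\Lambda^\op$), where the roles of $\geq_L$ and $\geq_R$ get interchanged. By \cite{Brustle.Hille.Ringel.ea:99} there are exactly $c_n = n! = |\Scal|$ basic classical tilting modules, so it suffices to exhibit $w \mapsto I_w$ as a well-defined, order-reversing \emph{injection} $(\Scal, \geq_L) \to \tilt_1\Lambda$: the count then forces bijectivity, and a matching of covering relations upgrades it to the asserted poset isomorphism with $(\tilt_1\Lambda)^\op$. Throughout I would use that $I_w$ does not depend on the chosen reduced expression (\cite[Proposition~3.15]{Iyama.Zhang:16}), so that $I_{s_iv} = I_i I_v$ whenever $s_i v >_L v$.

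The induction runs along the Hasse quiver $Q(\Scal, \geq_L)$, whose arrows are $s_i v \to v$ with $s_i v >_L v$; the base case is $I_e = \Lambda$, the maximum of $\tilt_1\Lambda$. For a generator I would analyse $I_i = \Lambda(1-e_i)\Lambda$ directly. Since $\Lambda/I_i \cong S_i$, there is a short exact sequence $0 \to I_i \to \Lambda \to S_i \to 0$, whence $\projdim_\Lambda I_i \leq \projdim_\Lambda S_i - 1 \leq 1$ because $\gldim\Lambda \leq 2$. As right modules $I_i = \bigoplus_{j \neq i} P_j \oplus \rad P_i$, and identifying these summands exhibits $\Lambda \to I_i$ as a covering relation of $Q(\tilt_1\Lambda)$, namely the classical-tilting left mutation of $\Lambda$ at the summand $P_i$; in particular $I_i$ is self-orthogonal with the correct number of summands.

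The heart of the argument is the inductive step: if $s_i v >_L v$, then $I_{s_i v} = I_i I_v$ is again a classical tilting module, obtained from $I_v$ by a single classical-tilting left mutation, so that $I_v \to I_{s_i v}$ is an arrow of $Q(\tilt_1\Lambda)$. Granting this, induction on $\ell(w)$ shows simultaneously that every $I_w$ lies in $\tilt_1\Lambda$ and that $w \mapsto I_w$ carries each arrow $s_i v \to v$ of $Q(\Scal, \geq_L)$ to the arrow $I_v \to I_{s_i v}$ of $Q(\tilt_1\Lambda)$, hence is order-reversing. To conclude I would then establish: (i) injectivity, obtained either by recovering $\ell(w)$ and the mutation path from $I_w$, or by separating the perpendicular categories $I_w^\perp$ via \cref{remark:tilting-order}; (ii) bijectivity, via the count $c_n = n!$; and (iii) the converse arrow-correspondence, namely that the downward classical-tilting mutations of an arbitrary $I_v$ are \emph{exactly} the $I_{s_i v}$ with $s_i v >_L v$, which makes the induced map of Hasse quivers bijective and therefore a poset isomorphism.

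I expect the main obstacle to be the crux step together with its converse in (iii): one must control the module structure of the product ideal $I_i I_v$ well enough to show that multiplication by $I_i$ performs \emph{exactly one} classical-tilting mutation — neither trivial nor several at once — while keeping the projective dimension at most $1$ and preserving $\Ext^1$-self-orthogonality for every $v$ with $s_i v >_L v$, and that these mutations exhaust all downward mutations of $I_v$. By contrast, the remaining reductions — the cardinality count, the passage to $\Lambda^\op$ by left–right symmetry together with $w \mapsto w^{-1}$, and the recovery of the full order from the Hasse quiver — are essentially formal.
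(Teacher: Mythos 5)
First, a framing point: the paper does not prove this statement at all --- it is imported verbatim as \cite[Theorem~3.18]{Iyama.Zhang:16} --- so your attempt can only be judged against general correctness and against the original proof in \cite{Iyama.Zhang:16}. Judged that way, what you have written is an architecture, not a proof. The formal reductions are fine: deducing bijectivity from injectivity plus the count $c_n = n!$ of \cite{Brustle.Hille.Ringel.ea:99}, passing to $\Lambda^\op$ by left--right symmetry, and upgrading a bijection that matches Hasse arrows in both directions to a poset anti-isomorphism are all sound. But the entire mathematical substance of the theorem sits in the step you label the ``crux'' and then explicitly defer: that for $s_i v >_L v$ the product ideal $I_i I_v$ is a classical tilting module and is \emph{exactly one} left mutation of $I_v$ at its $i$-th indecomposable summand, together with the converse in your (iii) that every arrow of $Q(\tilt_1\Lambda)$ arises this way. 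You never engage with the module structure of $I_i I_v$ --- no approximation sequences, no control of $\projdim$, no $\Ext^1$-vanishing --- so the induction you describe has no engine. In \cite{Iyama.Zhang:16} this is precisely where the work lies (their Propositions~3.15--3.17 and the proof that each $I_w$ is tilting, via explicit minimal approximation sequences of the kind reproduced in \cref{lemma:approximations} of this paper); it cannot be ``granted.''

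Second, injectivity --- without which your counting argument collapses --- is also left as a disjunction of two undeveloped options, neither of which is obviously viable. ``Recovering the mutation path from $I_w$'' is not meaningful as stated, since distinct reduced expressions for $w$ give distinct paths to the same ideal, and recovering $\ell(w)$ alone does not separate elements of equal length. ``Separating the perpendicular categories via \cref{remark:tilting-order}'' is close to circular: by \cref{theorem:auslander-reiten} a tilting module is determined by $T^\perp$, so distinguishing the categories $I_v^\perp$ and $I_w^\perp$ is essentially the same problem as distinguishing $I_v$ and $I_w$. That injectivity is genuinely delicate can be seen in this very paper, where the analogous statement (step~(2) in the proof of \cref{theorem:tilting-poset}) requires an induction through lattice joins and further input from \cite{Iyama.Zhang:16}. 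In short: your plan correctly locates the two hard points, and its global logic would close if they were supplied, but as submitted both are genuine gaps.
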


\begin{theorem}
 [{\cite[Theorem~3.5]{Iyama.Zhang:16}}]
 \label{theorem:endomorphism-algebra-of-classical-tilting-modules}
 For every element $w \in \Scal$ both the left-multiplication map $\Lambda \to \End_\Lambda(I_w)$ and the right-multiplication map $\Lambda^\op \to \End_{\Lambda^\op}(I_w)$ are isomorphisms of algebras.
\end{theorem}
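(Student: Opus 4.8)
The plan is to deduce both statements from the single fact that $I_w$, viewed as a $\Lambda$-bimodule, is a \emph{two-sided} tilting complex, and then to read off the multiplication-map isomorphisms from \cref{theorem:rickard-miyachi}. I would organize the argument in three steps.

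First, I would dispose of injectivity, which is essentially free. By \cref{theorem:classical-tilting-poset} the ideal $I_w$ is a tilting module both over $\Lambda$ and over $\Lambda^\op$, and a tilting module is faithful. Hence the left annihilator $\{\lambda \in \Lambda \mid \lambda I_w = 0\}$ and the right annihilator $\{\lambda \in \Lambda \mid I_w \lambda = 0\}$ both vanish, and these are precisely the kernels of the left- and right-multiplication maps. Moreover, being tilting, $I_w$ is self-orthogonal, so $\Ext^q_\Lambda(I_w, I_w) = 0$ for $q > 0$ and therefore $\dim_K \End_\Lambda(I_w) = \langle [I_w], [I_w] \rangle$, the Euler form on $K_0(\Lambda)$ (well defined since $\gldim \Lambda < \infty$). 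It thus suffices to prove the numerical identity $\langle [I_w], [I_w] \rangle = \dim_K \Lambda = \langle [\Lambda], [\Lambda] \rangle$, and symmetrically over $\Lambda^\op$; combined with injectivity this forces both multiplication maps to be isomorphisms.

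Second, I would show that each generator $I_i = \Lambda(1 - e_i)\Lambda$ is invertible in $\DPic(\Lambda)$, i.e.\ a biperfect two-sided tilting complex. The bimodule short exact sequence $0 \to I_i \to \Lambda \to S_i \to 0$, with $S_i$ the simple module at vertex $i$, presents $- \Lotimes_\Lambda I_i$ as a twist-type functor through $S_i$. I would prove this functor is an autoequivalence by exhibiting an explicit inverse bimodule and checking the unit and counit isomorphisms, the key input being the self-extension structure of $S_i$ computed from its minimal projective resolution over the Auslander algebra. This settles the base case $w = s_i$.

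Third — and this is the crux — I would prove that, for a reduced expression $(i_1, \ldots, i_\ell)$ of $w$, the derived tensor product $I_{i_1} \Lotimes_\Lambda \cdots \Lotimes_\Lambda I_{i_\ell}$ is concentrated in degree $0$ and coincides with the honest ideal product $I_{i_1} \cdots I_{i_\ell} = I_w$. Writing $v = s_{i_2} \cdots s_{i_\ell}$ with $\ell(s_{i_1} v) = \ell(v) + 1$, I would argue by induction on $\ell$ that $\Tor^\Lambda_q(I_{i_1}, I_v) = 0$ for $q > 0$ and that the multiplication map $I_{i_1} \otimes_\Lambda I_v \to I_w$ is an isomorphism of bimodules. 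Granting this, $I_w$ is a composite of the invertible objects from the second step, hence itself a biperfect two-sided tilting complex, and $- \Lotimes_\Lambda I_w$ is an autoequivalence of $\Dcal(\Lambda)$ carrying $[\Lambda]$ to $[I_w]$. As every autoequivalence is an isometry for the Euler form, $\langle [I_w], [I_w] \rangle = \langle [\Lambda], [\Lambda] \rangle = \dim_K \Lambda$, which closes the first step. Alternatively, once $I_w$ is known to be biperfect and two-sided tilting, one may invoke \cref{theorem:rickard-miyachi} directly to conclude that $\Lambda \to \RHom_\Lambda(I_w, I_w)$ and $\Lambda \to \RHom_{\Lambda^\op}(I_w, I_w)$ are isomorphisms in $\Dcal(\Lambda^e)$ and then pass to $H^0$.

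The main obstacle is the reduced-product statement of the third step: it is exactly the reducedness of $s_{i_1} \cdots s_{i_\ell}$ that must be shown to force the vanishing of the higher $\Tor$ and the identification of the derived tensor product with the module $I_w$, whereas in the non-reduced case a genuine homological shift appears and the product collapses. Proving this cleanly requires matching the combinatorics of the right weak order on $\Scal_n$ with the behaviour of the ideals $I_i$ under $- \Lotimes_\Lambda I_v$, and I expect the length-additivity criterion $\ell(s_{i_1} v) = \ell(v) + 1$ to be the precise hypothesis that propagates the Tor-vanishing through the induction. By comparison, the self-extension computation of the second step should be routine.
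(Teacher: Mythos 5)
There is a wrinkle you could not have seen: the paper does not prove this statement at all. It is imported verbatim from \cite[Theorem~3.5]{Iyama.Zhang:16}, so there is no internal proof to compare against; what can be compared is your proposal against the paper's logical architecture, and there your argument runs backwards. In the paper, the fact that $I_w$ is a two-sided tilting complex (the proposition opening the section on the derived Picard group) is \emph{deduced} from the present theorem: one uses \cref{theorem:classical-tilting-poset} to see that $\RHom_\Lambda(I_w,I_w)$ and $\RHom_{\Lambda^\op}(I_w,I_w)$ are concentrated in degree $0$, then feeds the two multiplication-map isomorphisms into \cref{theorem:rickard-miyachi}. You propose the reverse: establish invertibility of $I_w$ in $\DPic(\Lambda)$ first, then recover the multiplication maps from the ``only if'' direction of Miyachi's criterion. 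That inversion is legitimate only if invertibility is proved by means genuinely independent of the theorem, and this is where the proposal has a hole.

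The hole is your step 2. For a biperfect complex $T$ there is a canonical isomorphism $T \Lotimes_\Lambda \RHom_\Lambda(T,\Lambda) \cong \RHom_\Lambda(T,T)$, under which the unit map $\Lambda \to T \Lotimes_\Lambda \RHom_\Lambda(T,\Lambda)$ for the candidate inverse $\RHom_\Lambda(I_i,\Lambda)$ \emph{is} the left-multiplication map $\Lambda \to \RHom_\Lambda(I_i,I_i)$, and symmetrically on the other side; this identification is exactly why \cref{theorem:rickard-miyachi} is true. So ``checking the unit and counit isomorphisms'' is not a route to the case $w = s_i$ of the theorem --- it is a restatement of it, and the only independent input you offer, the self-extension structure of $S_i$, cannot carry it: one does have $\Ext_\Lambda^\bullet(S_i,S_i) \cong K \oplus K[-2]$, but $S_i$ is merely $2$-spherelike, not spherical ($\Lambda$ has global dimension $2$ and no Calabi--Yau property), and twist functors along spherelike objects are in general \emph{not} equivalences; what forces invertibility here is precisely the bimodule computation $\RHom_\Lambda(I_i,I_i) \cong \Lambda \cong \RHom_{\Lambda^\op}(I_i,I_i)$ that you are trying to avoid. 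Your step 3 (Tor-vanishing and $I_{i_1} \Lotimes_\Lambda \cdots \Lotimes_\Lambda I_{i_\ell} \cong I_w$ along reduced words, i.e.\ \cite[Propositions~3.15 and~3.17]{Iyama.Zhang:16}) is correctly flagged as the combinatorial crux, but even granting it, your induction never gets started at $\ell = 1$. Step 1, by contrast, is correct but redundant: once Miyachi applies, there is no need for faithfulness or the Euler-form dimension count --- one simply takes $H^0$ of the isomorphism $\Lambda \to \RHom_\Lambda(I_w,I_w)$, whose cohomology is concentrated in degree $0$ by self-orthogonality.
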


\section{The derived Picard group}

The goal of this section is to construct a homomorphism from the braid group $\Bcal = \Bcal_n$ on $n$ strands to the derived Picard group $\DPic(\Lambda)$.

\begin{proposition}
 For every element $w \in \Scal$ the ideal $I_w$ is a two-sided tilting complex over $\Lambda$.
\end{proposition}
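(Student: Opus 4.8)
The plan is to verify the hypotheses of \cref{theorem:rickard-miyachi} for $T = I_w$, regarded as an object of $\Dcal(\Lambda^e)$ via its natural $\Lambda$-$\Lambda$-bimodule structure. This reduces the assertion to three checks: that $I_w$ is biperfect, and that the left- and right-multiplication maps $\Lambda \to \RHom_\Lambda(I_w, I_w)$ and $\Lambda \to \RHom_{\Lambda^\op}(I_w, I_w)$ are isomorphisms in $\Dcal(\Lambda^e)$.

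For biperfectness I would invoke \cref{theorem:classical-tilting-poset}, which shows $I_w \in \tilt_1\Lambda$ and $I_w \in \tilt_1\Lambda^\op$; in particular $I_w$ is finite-dimensional on both sides. Since $\gldim\Lambda \leq 2$ and $\gldim\Lambda^\op = \gldim\Lambda$, every finite-dimensional module admits a finite projective resolution and is therefore perfect. Hence $(I_w)_\Lambda$ is perfect in $\Dcal(\Lambda)$ and ${}_\Lambda I_w$ is perfect in $\Dcal(\Lambda^\op)$, so $I_w$ is biperfect.

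The key step is to upgrade the algebra isomorphisms of \cref{theorem:endomorphism-algebra-of-classical-tilting-modules} to isomorphisms of complexes. Because $I_w$ is a classical tilting module over $\Lambda$, it is self-orthogonal, so $\Ext^q_\Lambda(I_w, I_w) = 0$ for all $q > 0$, and the complex $\RHom_\Lambda(I_w, I_w)$ has cohomology concentrated in degree zero, equal to $\End_\Lambda(I_w)$. Consequently the left-multiplication map $\Lambda \to \RHom_\Lambda(I_w, I_w)$ is an isomorphism in $\Dcal(\Lambda^e)$ exactly when the induced algebra homomorphism $\Lambda \to \End_\Lambda(I_w)$ is an isomorphism, which is precisely \cref{theorem:endomorphism-algebra-of-classical-tilting-modules}. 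The right-multiplication map is treated symmetrically: since $I_w \in \tilt_1\Lambda^\op$ is self-orthogonal over $\Lambda^\op$, the complex $\RHom_{\Lambda^\op}(I_w, I_w)$ collapses to $\End_{\Lambda^\op}(I_w)$ in degree zero, and the corresponding multiplication map is an isomorphism again by \cref{theorem:endomorphism-algebra-of-classical-tilting-modules}.

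I expect the only delicate point to be the bookkeeping that identifies the derived multiplication maps of \cref{theorem:rickard-miyachi} with the honest algebra maps of \cref{theorem:endomorphism-algebra-of-classical-tilting-modules} after the cohomological collapse: one must check that the quasi-isomorphism $\RHom_\Lambda(I_w, I_w) \simeq \End_\Lambda(I_w)$ respects the ambient $\Lambda^e$-module structure, so that an isomorphism on $H^0$ really is an isomorphism in $\Dcal(\Lambda^e)$. Self-orthogonality is what renders this routine, as it removes the higher cohomology that could otherwise obstruct the comparison. With all three hypotheses in hand, \cref{theorem:rickard-miyachi} yields that $I_w$ is a two-sided tilting complex over $\Lambda$.
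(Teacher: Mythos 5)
Your proof is correct and follows essentially the same route as the paper: both reduce to Miyachi's criterion (\cref{theorem:rickard-miyachi}), using the fact that $I_w$ is a classical tilting module on both sides (\cref{theorem:classical-tilting-poset}) to collapse $\RHom_\Lambda(I_w,I_w)$ and $\RHom_{\Lambda^\op}(I_w,I_w)$ to the endomorphism algebras, and then invoking \cref{theorem:endomorphism-algebra-of-classical-tilting-modules} for the multiplication maps. Your write-up merely makes explicit the biperfectness check and the bimodule-structure bookkeeping that the paper's terse proof leaves implicit.
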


\begin{proof}
 It is $\RHom_\Lambda(T, T) = \End_\Lambda(T)$ and $\RHom_{\Lambda^\op}(T, T) = \End_{\Lambda^\op}(T)$ for $T = I_w$ by \cref{theorem:classical-tilting-poset}.
 Now use \cref{theorem:endomorphism-algebra-of-classical-tilting-modules,theorem:rickard-miyachi}.
\end{proof}

Before stating this section's main result, we recall properties of braid groups.
Details can be found for example in \cite[Chapter~6]{Kassel.Turaev:08}.

\medskip

Let $\Fcal = \Fcal_n$ be the free group with generators $\sfrak_i$ for $1 \leq i < n$.
The braid group $\Bcal$ is by definition the quotient of $\Fcal$ by the relations
\[
 \label{braid-relations}
 \tag{$\star$}
 \hspace{2em}
 \arraycolsep 2pt
 \begin{array}{rcll}
  \sfrak_i \sfrak_j
  &=&
  \sfrak_j \sfrak_i
  &
  \hspace{2em}\text{for $1 \leq i < i+1 < j < n$,}
  \\
  \sfrak_i \sfrak_{i+1} \sfrak_i
  &=&
  \sfrak_{i+1} \sfrak_i \sfrak_{i+1}
  &
  \hspace{2em}\text{for $1 \leq i < i+1 < n$.}
 \end{array}
\]

We denote by $\Bcal_+$ the monoid generated by $\sfrak_i$ with $1 \leq i < n$ subject to the same relations~\cref{braid-relations}.
It is well-known that the canonical monoid morphism $\Bcal_+ \to \Bcal$ is injective.
In this way, $\Bcal_+$ becomes a subset of $\Bcal$.
With $\Bcal_- := (\Bcal_+)^{-1}$ we then have
\[
 \Bcal
 \:=\:
 \Bcal_+ \Bcal_-
 \:=\:
 \Bcal_- \Bcal_+
 \,.
\]
The \emph{length} of an element $x = \sfrak_{i_1} \cdots \sfrak_{i_\ell}$ in~$\Bcal_+$ is the integer $\ell(x) = \ell$.

\medskip

The rule $\sfrak_i \mapsto s_i$ induces a surjective group morphism $\Bcal \to \Scal, x \mapsto \overline{x}$, whose kernel is generated by the elements \smash{$\sfrak_i^2$}.
Furthermore, we can and will regard the symmetric group~$\Scal$ as a subset~$\Scal_+$ of $\Bcal_+$ by identifying each element $w = s_{i_1} \cdots s_{i_{\ell}} \in \Scal$ of length~$\ell$ with $\underline{w} := \sfrak_{i_1} \cdots \sfrak_{i_\ell} \in \Bcal_+$.
We write~$w_+$ for $w_0 \in \Scal$ when considered as the element $\underline{w_0}$ of $\Scal_+$.

\begin{remark}
 The pair $(\Bcal_+, w_+)$ is a \emph{comprehensive Garside monoid} in the sense of \cite[Theorem~6.20]{Kassel.Turaev:08} and the canonical map $\Bcal_+ \to \Bcal$ is the embedding into its group of fractions.
\end{remark}

After the preparation, we get to the promised result:

\begin{proposition}
 \label{proposition:braid-group-to-derived-picard-group}
 The map $\Scal \to \DPic(\Lambda)$ given by $w \mapsto I_w$ extends to a group homomorphism:
 \[
  \begin{tikzcd}[row sep = 0.1cm]
   \Bcal \ar[r] & \DPic(\Lambda)
   \\
   \pad[0.5em]{x} \ar[r, mapsto] & \pad[1.5em]{T_x}
  \end{tikzcd}
 \]
\end{proposition}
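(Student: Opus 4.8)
The plan is to send each generator $\sfrak_i$ to the class of the ideal $I_{s_i}$, which is a two-sided tilting complex by the preceding proposition and hence an element of the group $\DPic(\Lambda)$. Since $\Bcal$ is the quotient of the free group $\Fcal$ by the braid relations~$(\star)$, and since $\DPic(\Lambda)$ is a group (so no generators need to be inverted separately), defining a homomorphism $\Bcal \to \DPic(\Lambda)$ with $\sfrak_i \mapsto I_{s_i}$ amounts to checking that the classes $I_{s_i}$ satisfy $(\star)$ in $\DPic(\Lambda)$, where the group operation is $-\Lotimes_\Lambda -$. We then let $T_x$ be the image of $x$ under this homomorphism.

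I would reduce the verification of $(\star)$ to the following multiplicativity statement: \emph{for $v, w \in \Scal$ with $\ell(vw) = \ell(v) + \ell(w)$ there is an isomorphism $I_v \Lotimes_\Lambda I_w \cong I_{vw}$ in $\Dcal(\Lambda^e)$.} Granting this, both families of relations follow at once: for $\lvert i - j\rvert \ge 2$ the words $s_i s_j$ and $s_j s_i$ are reduced and equal in $\Scal$, so $I_{s_i}\Lotimes_\Lambda I_{s_j} \cong I_{s_i s_j} = I_{s_j s_i} \cong I_{s_j}\Lotimes_\Lambda I_{s_i}$; likewise $s_i s_{i+1} s_i = s_{i+1} s_i s_{i+1}$ is reduced of length three, whence $I_{s_i}\Lotimes_\Lambda I_{s_{i+1}}\Lotimes_\Lambda I_{s_i} \cong I_{s_i s_{i+1} s_i} = I_{s_{i+1} s_i s_{i+1}} \cong I_{s_{i+1}}\Lotimes_\Lambda I_{s_i}\Lotimes_\Lambda I_{s_{i+1}}$, using that $I_{(-)}$ depends only on the underlying permutation. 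The same statement, applied to a reduced expression, also gives $T_{\underline{w}} = I_w$ for every $w \in \Scal$, so the homomorphism genuinely extends the original assignment.

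To prove the multiplicativity statement I would induct on $\ell(w)$ and use associativity of $\Lotimes_\Lambda$, which reduces everything to the single-generator case $I_u \Lotimes_\Lambda I_{s_j} \cong I_{u s_j}$ whenever $\ell(u s_j) = \ell(u) + 1$. Apply $I_u \Lotimes_\Lambda(-)$ to the short exact sequence of bimodules
\[
 0 \longrightarrow I_{s_j} \longrightarrow \Lambda \longrightarrow S_j \longrightarrow 0,
\]
where $\Lambda/I_{s_j} \cong S_j$ is the simple bimodule concentrated at the vertex $j$. Because $I_u$ is a classical tilting module, $\projdim (I_u)_\Lambda \le 1$, and hence $\Tor^\Lambda_q(I_u, S_j) = 0$ for $q \ge 2$. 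Writing out the homology long exact sequence of the resulting triangle $I_u \Lotimes_\Lambda I_{s_j} \to I_u \to I_u \Lotimes_\Lambda S_j$, and using $I_u \otimes_\Lambda(\Lambda/I_{s_j}) = I_u/I_u I_{s_j} = I_u/I_{us_j}$ together with $I_u I_{s_j} = I_{us_j}$ (which holds by the very definition of $I_{us_j}$ on the reduced word obtained by appending $j$ to a reduced expression of $u$), one finds that $I_u \Lotimes_\Lambda I_{s_j}$ is concentrated in degree $0$ and fits into a short exact sequence
\[
 0 \longrightarrow \Tor^\Lambda_1(I_u, S_j) \longrightarrow I_u \otimes_\Lambda I_{s_j} \longrightarrow I_{us_j} \longrightarrow 0.
\]
Thus the statement is equivalent to the vanishing $\Tor^\Lambda_1(I_u, S_j) = 0$ for all $u$ and $j$ with $u s_j > u$.

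This vanishing is the main obstacle; everything preceding it is formal. Since the differentials of a minimal projective resolution have entries in the radical, $\Tor^\Lambda_1(I_u, S_j)$ computes the multiplicity of the indecomposable projective $e_j \Lambda$ as a direct summand of the first syzygy term $P_1$ in a minimal presentation $0 \to P_1 \to P_0 \to I_u \to 0$. I would therefore finish by showing, from the explicit description of the classical tilting modules $I_u$ furnished by \cref{theorem:classical-tilting-poset}, that $e_j\Lambda$ fails to occur in $P_1$ precisely when $u s_j > u$. This is a concrete analysis of the projective presentation of $I_u$ at the vertex $j$ for the specific algebra $\Lambda_n$, and it is the point at which the combinatorics of the weak order on $\Scal$ enters the argument.
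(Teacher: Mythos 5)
Your overall skeleton coincides with the paper's: define a morphism on the free group $\Fcal$ by $\sfrak_i \mapsto I_{s_i}$ (legitimate, since each $I_{s_i}$ is a two-sided tilting complex by the preceding proposition), and check the braid relations in $\DPic(\Lambda)$ by establishing multiplicativity $I_v \Lotimes_\Lambda I_w \cong I_{vw}$ for length-additive products, which settles both families of relations because the two sides of each braid relation are reduced expressions of the same permutation. The paper does exactly this, except that it obtains the multiplicativity statement as a citation of \cite[Proposition~3.17]{Iyama.Zhang:16}, whereas you set out to prove it. Your formal reductions are all correct: induction on $\ell(w)$ reduces to the single-generator case; the long exact sequence for $0 \to I_{s_j} \to \Lambda \to S_j \to 0$ together with $\projdim\,(I_u)_\Lambda \leq 1$ shows that $I_u \Lotimes_\Lambda I_{s_j}$ is concentrated in degree zero and sits in $0 \to \Tor^\Lambda_1(I_u,S_j) \to I_u \otimes_\Lambda I_{s_j} \to I_{us_j} \to 0$; and $\Tor^\Lambda_1(I_u,S_j)$ is indeed the multiplicity of $e_j\Lambda$ in the first term of a minimal projective presentation of $I_u$. (Note, though, that even the identity $I_u I_{s_j} = I_{us_j}$ already invokes the independence of $I_w$ from the chosen reduced expression, i.e.\ \cite[Proposition~3.15]{Iyama.Zhang:16}.)

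The gap is the step you yourself flag as ``the main obstacle'': the vanishing $\Tor^\Lambda_1(I_u,S_j) = 0$ whenever $\ell(us_j) = \ell(u)+1$ is asserted, not proved, and this is precisely the non-formal content of \cite[Proposition~3.17]{Iyama.Zhang:16} that the paper's proof outsources. Your plan for closing it --- reading off the minimal projective presentation of $I_u$ at the vertex $j$ from ``the explicit description of the classical tilting modules $I_u$ furnished by \cref{theorem:classical-tilting-poset}'' --- cannot work as stated, because that theorem is only the poset isomorphism $(\Scal,\geq_L) \cong (\tilt_1\Lambda)^{\op}$ and contains no description of the modules $I_u$, let alone of their projective presentations. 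Carrying out the ``concrete analysis'' requires the structural results on the ideals $I_w$ from \cite{Iyama.Zhang:16} (for instance their Lemma~3.6, used in \cref{lemma:approximations}, which converts such $\Tor$/$\Ext$ computations into statements about $- \otimes_\Lambda S_i$, or their explicit computation of the summands $e_i I_w$) --- in other words, essentially the material your argument was meant to replace. A minor additional point: you aim for an equivalence (``precisely when''), but only the implication from $\ell(us_j) = \ell(u)+1$ to the vanishing is needed, and the unneeded converse makes the unproven claim strictly harder. In summary: correct strategy and correct formal reductions, but the crux is missing, and the reference you give for filling it does not suffice.
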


\begin{proof}
 Consider the diagram
 \[
  \begin{tikzcd}[]
   &
   \Fcal
   \ar[ld, "\pi"']
   \ar[rd, "\varphi"]
   \\
   \Bcal
   \ar[rr, dotted]
   &&
   \DPic(\Lambda)
  \end{tikzcd}
 \]
 where $\pi$ and $\varphi$ are the group morphisms given by $\sfrak_i \mapsto \sfrak_i$ and $\sfrak_i \mapsto I_i$, respectively.
 For all reduced expressions $(i_1,\ldots,i_\ell)$ for $w \in \Scal$ we have by \cite[Propositions~3.17]{Iyama.Zhang:16} in $\DPic(\Lambda)$
 \[
  \varphi\big(\sfrak_{i_1} \cdots \sfrak_{i_\ell}\big)
  \:=\:
  I_{i_1} \Lotimes_\Lambda \cdots \Lotimes_\Lambda I_{i_\ell}
  \:=\:
  I_w
  \,.
 \]
 It follows that $\varphi$ factors over $\pi$ because $\Bcal$ is defined by relations $v = w$ with \smash{$v = \sfrak_{i_1} \cdots \sfrak_{i_\ell}, w = \sfrak_{j_1} \cdots \sfrak_{j_\ell} \in \Fcal$} where $(i_1,\ldots,i_\ell)$ and $(j_1,\ldots,j_\ell)$ are reduced expressions for the element \smash{$\overline{\pi(v)} = \overline{\pi(w)} \in \Scal$}.
\end{proof}

\section{Tilting complexes}

Composing the map $\Bcal_+ \to \DPic(\Lambda)$ from \cref{proposition:braid-group-to-derived-picard-group} with the canonical map $\DPic(\Lambda) \to \tilt^\bullet\Lambda$ yields a map $\Bcal_+ \to \tilt^\bullet\Lambda$.
In this section we discuss why this map becomes an anti-morphism of posets when endowing $\Bcal_+$ with the right divisibility order.
Furthermore, we show that it preserves covering relations.

\medskip

The \emph{right-divisibility order}~$\geq_L$ and the \emph{left-divisibility order}~$\geq_R$ are extensions of $\geq_L$ and $\geq_R$ from $\Scal_+$ to $\Bcal$ where for $v, w \in \Bcal$:
\[
  \begin{array}{lcl}
   y \geq_L x
   &:\Leftrightarrow&
   y x^{-1} \in \Bcal_+
   \\ [0.2em]
   y \geq_R x
   &:\Leftrightarrow&
   x^{-1} y \in \Bcal_+
  \end{array}
\]

\begin{proposition}
 \label{proposition:braid-monoid-to-tilting-complexes}
 There is a morphism of strict posets:
 \[
  \begin{tikzcd}[row sep = 0.1cm]
   (\Bcal_+, >_L) \ar[r] & (\tilt^\bullet\Lambda)^\op
   \\
   \pad[1.8em]{x} \ar[r, mapsto] & \pad[1em]{T_x}\hspace{1em}
  \end{tikzcd}
 \]
\end{proposition}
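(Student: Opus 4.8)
The plan is to reduce the statement to the case of a single generator and then to exploit that every $T_x$ is a two-sided tilting complex, hence induces a self-equivalence of $\Dcal(\Lambda)$ by \cref{theorem:rickard}. Since a morphism of strict posets need only respect the strict relation and the order on $(\tilt^\bullet\Lambda)^\op$ is transitive, it suffices to prove the single claim: for every $y \in \Bcal_+$ and every generator $\sfrak_i$ one has $T_{\sfrak_i y} < T_y$ in $\tilt^\bullet\Lambda$ (equivalently, $T_{\sfrak_i y}$ lies strictly above $T_y$ in the opposite poset). Indeed, if $x >_L y$ then $x y^{-1} = \sfrak_{i_1}\cdots\sfrak_{i_k} \in \Bcal_+$ with $k \geq 1$, and the chain
\[
 y \;<_L\; \sfrak_{i_k} y \;<_L\; \sfrak_{i_{k-1}}\sfrak_{i_k} y \;<_L\; \cdots \;<_L\; \sfrak_{i_1}\cdots\sfrak_{i_k} y \;=\; x
\]
exhibits $x$ as obtained from $y$ by successively multiplying on the left by a single generator. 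Transitivity of $<$ in $\tilt^\bullet\Lambda$ then reduces the proposition to the single-generator step.

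For that step, recall that $x \mapsto T_x$ is a group homomorphism with $T_{\sfrak_i} = I_i$ (\cref{proposition:braid-group-to-derived-picard-group}), so $T_{\sfrak_i y} = I_i \Lotimes_\Lambda T_y$. Let $\widetilde{T_y}$ be the inverse of $T_y$ in $\DPic(\Lambda)$, so that $G := - \Lotimes_\Lambda \widetilde{T_y}$ is a self-equivalence of $\Dcal(\Lambda)$, quasi-inverse to $- \Lotimes_\Lambda T_y$ by \cref{theorem:rickard}. Since $G(T_y) \cong T_y \Lotimes_\Lambda \widetilde{T_y} \cong \Lambda$ and $G(T_{\sfrak_i y}) \cong I_i \Lotimes_\Lambda T_y \Lotimes_\Lambda \widetilde{T_y} \cong I_i$, applying $G$ to both arguments of the defining Hom-spaces gives, for every $q$,
\[
 \Hom_{\Dcal(\Lambda)}(T_y,\, T_{\sfrak_i y}[q])
 \;\cong\;
 \Hom_{\Dcal(\Lambda)}(\Lambda,\, I_i[q])
 \;=\;
 H^q(I_i) ,
\]
which vanishes for all $q > 0$ because $I_i$ is a module, hence concentrated in degree $0$. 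By the definition of the order on $\tilt^\bullet\Lambda$ this yields $T_y \geq T_{\sfrak_i y}$.

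It remains to verify strictness, i.e.\ $T_y \neq T_{\sfrak_i y}$ in $\tilt^\bullet\Lambda$. Were they equivalent, we would have $\add(T_y) = \add(T_{\sfrak_i y})$, and transporting along the equivalence $G$ (which sends $T_y$ to $\Lambda$ and $T_{\sfrak_i y}$ to $I_i$) would force $\add(\Lambda) = \add(I_i)$. But $I_i = I_{s_i}$ is a classical tilting module and $s_i \neq e$, so $I_{s_i} \neq I_e = \Lambda$ in $\tilt_1\Lambda$ by the injectivity in \cref{theorem:classical-tilting-poset}; in particular $\add(I_i) \neq \add(\Lambda)$, a contradiction. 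Hence $T_y > T_{\sfrak_i y}$, completing the single-generator step and, by the reduction above, the proof. I expect the main obstacle to be the middle step: correctly setting up the self-equivalence $G$ and matching the bimodule computations so that $T_{\sfrak_i y}$ is carried to $I_i$ \emph{in degree zero}, which is exactly what forces the higher Hom-groups to vanish; the reduction to generators and the strictness argument are then comparatively routine.
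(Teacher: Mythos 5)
Your proof is correct and follows essentially the same route as the paper: reduce to left multiplication by a single generator $\sfrak_i$, then use the group homomorphism to $\DPic(\Lambda)$ from \cref{proposition:braid-group-to-derived-picard-group} together with \cref{theorem:rickard} to transport the comparison of $T_y$ with $T_{\sfrak_i y}$ to the comparison of $\Lambda$ with $I_i$, which is settled by \cref{theorem:classical-tilting-poset}. The only (cosmetic) difference is that the paper pushes the strict inequality $\Lambda > I_i$ forward along $- \Lotimes_\Lambda T_x$, whereas you pull back along the quasi-inverse and verify the Hom-vanishing and the strictness by hand.
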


\begin{proof}
 It suffices to verify $T_x > T_{\sfrak_i x}$ for every $x \in \Bcal_+$ and $1 \leq i < n$.
 \Cref{theorem:classical-tilting-poset} shows $\Lambda > I_i$ and we get $T_x = \Lambda \Lotimes_\Lambda T_x > I_i \Lotimes_\Lambda T_x = T_{\sfrak_i x}$ in $\tilt^\bullet\Lambda$ with \cref{theorem:rickard,proposition:braid-group-to-derived-picard-group}.
\end{proof}

The following fact is a variation of \cite[Lemma~4.3]{Iyama.Zhang:16}:

\begin{lemma}
 \label{lemma:approximations}
 For all $1 \leq i < n$  we have a short exact sequence
 \[
  0
  \to
  e_i \Lambda
  \xrightarrow{\pad[3pt]{\iota \,=\, \left(\begin{smallmatrix} \alpha_{i-1} \cdot \\ \beta_{i+1} \cdot \end{smallmatrix}\right)}}
  e_{i-1} \Lambda \oplus e_{i+1} \Lambda
  \xrightarrow{\pi \,=\, \left(\begin{smallmatrix} -\beta_i \cdot & \alpha_i \cdot \end{smallmatrix}\right)}
  e_i I_i
  \to
  0
 \]
 in $\md\Lambda$ where $\iota$ is a minimal left and $\pi$ a minimal right $\add((1-e_i)\Lambda)$-approximation and by convention $e_0 := 0$.
\end{lemma}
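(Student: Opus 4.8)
The plan is to separate the two assertions — exactness of the sequence and the approximation/minimality properties — and to read both off the module structure of the indecomposable projectives $e_j\Lambda$. Write $A=K[x]/(x^n)$ and $M_k=K[x]/(x^k)$, so that $\Lambda=\End_A\big(\bigoplus_{k=1}^n M_k\big)$ with vertex $j$ corresponding to $M_j$; the arrows $\alpha_j$ and $\beta_{j+1}$ realize the irreducible inclusion $M_j\hookrightarrow M_{j+1}$ and projection $M_{j+1}\twoheadrightarrow M_j$. Hence $e_a\Lambda e_b\cong\Hom_A(M_a,M_b)$, so $\dim_K e_a\Lambda e_b=\min(a,b)$ and $d_i:=\dim_K e_i\Lambda=\sum_{j=1}^n\min(i,j)=\tfrac12 i(2n+1-i)$.

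I would first establish exactness. The relation $\pi\iota=0$ is immediate, since $\pi\iota$ is left multiplication by $\alpha_i\beta_{i+1}-\beta_i\alpha_{i-1}$, a defining relation of $\Lambda$ (at $i=1$ one uses instead $\alpha_1\beta_2=0$ together with the convention $e_0=0$). For surjectivity, note that $\operatorname{im}\pi=\beta_i\Lambda+\alpha_i\Lambda$; as every nontrivial path starting at $i$ begins with $\alpha_i$ or $\beta_i$, this coincides with $e_i\Lambda(1-e_i)\Lambda=e_iI_i$, and moreover $\dim_K e_iI_i=d_i-1$ because $\Lambda/I_i\cong K$ gives $e_i(\Lambda/I_i)\cong S_i$. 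Injectivity of $\iota$ follows from its second component $\beta_{i+1}\cdot\colon e_i\Lambda\to e_{i+1}\Lambda$, which under the identification above is composition with the irreducible map relating $M_i$ and $M_{i+1}$; being a monomorphism or an epimorphism, it induces an injection on each $\Hom$-space. A one-line computation with $d_i$ gives $d_{i-1}+d_{i+1}=2d_i-1$, whence $\dim_K\ker\pi=(d_{i-1}+d_{i+1})-(d_i-1)=d_i=\dim_K\operatorname{im}\iota$; together with $\operatorname{im}\iota\subseteq\ker\pi$ this forces $\operatorname{im}\iota=\ker\pi$, so the sequence is exact.

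For the approximation claims I would test against the projectives $e_j\Lambda$ with $j\neq i$, using $e_{i-1}\Lambda\oplus e_{i+1}\Lambda\in\add((1-e_i)\Lambda)$. That $\pi$ is a right approximation is the surjectivity of $\Hom_\Lambda(e_j\Lambda,\pi)$, i.e.\ of the restriction of $\pi$ to the $e_j$-components, which is clear from surjectivity of $\pi$. That $\iota$ is a left approximation is the surjectivity of the map $e_j\Lambda e_{i-1}\oplus e_j\Lambda e_{i+1}\to e_j\Lambda e_i$, $(u,v)\mapsto u\alpha_{i-1}+v\beta_{i+1}$; this holds because for $j\neq i$ every path in $e_j\Lambda e_i$ is nontrivial and hence ends with $\alpha_{i-1}$ or $\beta_{i+1}$, the only arrows into $i$. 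Minimality is then formal: $\ker\pi=\operatorname{im}\iota$ is generated by $(\alpha_{i-1},\beta_{i+1})\in\rad(e_{i-1}\Lambda\oplus e_{i+1}\Lambda)$, so $\ker\pi\subseteq\rad(e_{i-1}\Lambda\oplus e_{i+1}\Lambda)$ and $\pi$ is right minimal; dually, both projections of $\iota$ onto the indecomposable summands $e_{i-1}\Lambda\not\cong e_{i+1}\Lambda$ are nonzero, so $\iota$ is left minimal.

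The only genuinely nonformal ingredient — and thus the main obstacle — is pinning down this module structure: identifying the arrows with the irreducible inclusions and projections among the $M_k$, which underlies both the dimension formula and, crucially, the injectivity of $\beta_{i+1}\cdot$. An alternative, purely combinatorial route works directly with the path basis of $e_i\Lambda$ and verifies $\ker\pi=\operatorname{im}\iota$ by hand; this stays closer to the cited \cite[Lemma~4.3]{Iyama.Zhang:16} but is more laborious. In either approach the approximation and minimality statements fall out of the $\Hom$-computations once exactness is in place.
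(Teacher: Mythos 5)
Your exactness argument, both approximation properties, and the right minimality of $\pi$ are all correct, and they follow a genuinely more elementary route than the paper, which instead deduces the approximation property from the vanishing $\Ext^1_\Lambda(e_iI_i,e_j\Lambda)\cong\Ext^2_\Lambda(S_i,e_j\Lambda)\cong e_j\Lambda\otimes_\Lambda S_i=0$ and cites \cite[Proposition~1.1]{Auslander.Reiten:91} for both minimality claims. The genuine gap is your last step, the left minimality of $\iota$. The criterion you invoke --- both components of $\iota$ onto the non-isomorphic indecomposable summands are nonzero, hence $\iota$ is left minimal --- is false in general. If $Y_1\not\cong Y_2$ are indecomposable and $g\colon Y_2\to Y_1$ is any nonzero homomorphism, then
\[
 f
 \:=\:
 \left(\begin{smallmatrix} g \\ \id \end{smallmatrix}\right)
 \colon
 Y_2\longrightarrow Y_1\oplus Y_2
\]
has both components nonzero, yet $\phi=\left(\begin{smallmatrix} 0 & g \\ 0 & \id \end{smallmatrix}\right)$ satisfies $\phi f=f$ without being an automorphism, so $f$ is not left minimal: non-minimality can hide in the ``graph'' summand $\{(g(y),y)\,|\,y\in Y_2\}$ rather than in the two given summands. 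This danger is real in the situation at hand, because for $1<i<n$ both $\Hom_\Lambda(e_{i-1}\Lambda,e_{i+1}\Lambda)$ and $\Hom_\Lambda(e_{i+1}\Lambda,e_{i-1}\Lambda)$ are nonzero (left multiplication by $\beta_{i+1}\beta_i$ and by $\alpha_{i-1}\alpha_i$, respectively), so there are decompositions of $e_{i-1}\Lambda\oplus e_{i+1}\Lambda$ other than the obvious one, and your argument does not exclude that $\iota$ lands in one of their summands.

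What does suffice is the following sharpening. Since the two summands are non-isomorphic indecomposables, every automorphism of $e_{i-1}\Lambda\oplus e_{i+1}\Lambda$ has invertible diagonal entries, and by Krull--Schmidt a failure of left minimality would force one component of $\iota$ to factor through the other: either $\alpha_{i-1}=u\beta_{i+1}$ for some $u\in e_{i-1}\Lambda e_{i+1}$, or $\beta_{i+1}=v\alpha_{i-1}$ for some $v\in e_{i+1}\Lambda e_{i-1}$. Both are impossible: the defining relations of $\Lambda$ are homogeneous of degree two in path length, so $\Lambda$ inherits the path-length grading; every element of $e_{i-1}\Lambda e_{i+1}$ or $e_{i+1}\Lambda e_{i-1}$ is concentrated in degrees $\geq 2$, hence $u\beta_{i+1}$ and $v\alpha_{i-1}$ lie in degrees $\geq 3$, whereas the arrows $\alpha_{i-1}$ and $\beta_{i+1}$ are nonzero of degree $1$. (For $i=1$ the question is moot, as $\iota$ is then a nonzero map into the indecomposable module $e_2\Lambda$.) Inserting this argument --- or simply quoting \cite[Proposition~1.1]{Auslander.Reiten:91} as the paper does, after observing that neither $e_i\Lambda$ nor $e_iI_i$ is a direct summand of $e_{i-1}\Lambda\oplus e_{i+1}\Lambda$ --- closes the gap; the rest of your proof stands as written.
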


\begin{proof}
 This short exact sequence is the minimal projective resolution of $e_i I_i = \rad(e_i \Lambda)$.
 For $j \neq i$, applying $\Hom_\Lambda(-, e_j \Lambda)$ and $\Hom_\Lambda(e_j \Lambda, -)$ yields exact sequences:
 \[
  \begin{tikzcd}[row sep = 1ex, column sep = 3ex]
   \Hom_\Lambda(e_{i-1} \Lambda \oplus e_{i+1} \Lambda, e_j \Lambda)
   \ar[r, "\iota^*"]
   &
   \Hom_\Lambda(e_i \Lambda, e_j \Lambda)
   \ar[r]
   &
   \Ext^1_\Lambda(e_i I_i, e_j \Lambda)
   \\
   \Hom_\Lambda(e_j \Lambda, e_{i-1} \Lambda \oplus e_{i+1} \Lambda)
   \ar[r, "\pi_*"]
   &
   \Hom_\Lambda(e_j \Lambda, e_i I_i)
   \ar[r]
   &
   \Ext^1_\Lambda(e_j \Lambda, e_i \Lambda)
  \end{tikzcd}
 \]
 Letting $S_i$ be the simple $\Lambda^e$-module given by the short exact sequence $0 \to I_i \to \Lambda \to S_i \to 0$ we have with \cite[Lemma~3.6]{Iyama.Zhang:16}
 \[
  \Ext^1_\Lambda(e_i I_i, e_j \Lambda)
  \:\cong\:
  \Ext^2_\Lambda(S_i, e_j \Lambda)
  \:\cong\:
  e_j \Lambda \otimes_\Lambda S_i
  \:=\:
  0
  \,.
 \]
 Clearly, $\Ext^1_\Lambda(e_j \Lambda, e_i \Lambda) = 0$, too.
 Therefore $\iota$ and $\pi$ are $\add((1-e_i)\Lambda)$-approximations.
 Both of them are minimal by \cite[Proposition~1.1]{Auslander.Reiten:91}, since neither $e_i I_i$ nor $e_i \Lambda$ is a direct summand of $e_{i-1} \Lambda \oplus e_{i+1} \Lambda$.
\end{proof}

The next lemma will be essential to determine $Q(\tilt\Lambda)$.

\begin{lemma}
 \label{lemma:mutation-triangle}
 For all $x \in \Bcal_+$ and $1 \leq i < n$ we have a triangle
 \[
  \begin{tikzcd}
   e_i T_x
   \ar[r, "\iota"]
   &
   e_{i-1} T_x \oplus e_{i+1} T_x
   \ar[r, "\pi"]
   &
   e_i T_{\sfrak_i x}
   \ar[r]
   &
   \cdot
  \end{tikzcd}
 \]
 in $\Dcal(\Lambda)$ where $\iota$ is a minimal left and $\pi$ a minimal right $\add((1-e_i)T_x)$-approximation.
 Furthermore, $e_j T_{\sfrak_i x} = e_j T_x$ for all $1 \leq j \leq n$ with $j \neq i$.
\end{lemma}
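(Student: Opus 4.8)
The plan is to obtain the triangle by transporting the short exact sequence of \cref{lemma:approximations} through the derived tensor functor $F := - \Lotimes_\Lambda T_x$. First I would record that, since $\Bcal \to \DPic(\Lambda)$ is a group homomorphism with composition $- \Lotimes_\Lambda -$, we have $T_{\sfrak_i x} = T_{\sfrak_i} \Lotimes_\Lambda T_x = I_i \Lotimes_\Lambda T_x$ (exactly as already used in the proof of \cref{proposition:braid-monoid-to-tilting-complexes}). Left-multiplying by $e_i$ then gives $e_i T_{\sfrak_i x} = (e_i I_i) \Lotimes_\Lambda T_x = F(e_i I_i)$. By \cref{theorem:rickard} the functor $F$ is a triangle self-equivalence of $\Dcal(\Lambda)$, and since each $e_j \Lambda$ is projective we have $F(e_j \Lambda) = e_j \Lambda \otimes_\Lambda T_x = e_j T_x$; in particular $F$ restricts to an equivalence $\add((1-e_i)\Lambda) \xrightarrow{\sim} \add((1-e_i)T_x)$.

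Applying $F$ to the short exact sequence of \cref{lemma:approximations}, regarded as a triangle in $\Dcal(\Lambda)$, then yields
\[
 e_i T_x \to e_{i-1} T_x \oplus e_{i+1} T_x \to e_i T_{\sfrak_i x} \to \cdot
\]
after the identification $F(e_i I_i) = e_i T_{\sfrak_i x}$ above, with the first two maps being $F(\iota)$ and $F(\pi)$. It then remains to see that these are the asserted minimal approximations. Because the source $e_i \Lambda$ and the targets $e_j \Lambda$ with $j \neq i$ are projective modules concentrated in degree $0$, the relevant $\Hom_{\Dcal(\Lambda)}$-spaces coincide with the module $\Hom_\Lambda$-spaces; hence $\iota$ and $\pi$ are already minimal left, respectively right, $\add((1-e_i)\Lambda)$-approximations \emph{in $\Dcal(\Lambda)$}, not merely in $\md\Lambda$. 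A triangle equivalence preserves factorization through a given map and preserves left/right minimality, so $F$ carries $\iota,\pi$ to minimal left, respectively right, $\add((1-e_i)T_x)$-approximations.

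For the final assertion I would compute $e_j I_i = e_j \Lambda$ whenever $j \neq i$: orthogonality of the idempotents gives $e_j(1-e_i) = e_j$, so $e_j \in I_i = \Lambda(1-e_i)\Lambda$, whence $e_j \Lambda \subseteq I_i$ and therefore $e_j I_i = e_j \Lambda$. Tensoring (again $e_j \Lambda$ is projective) gives $e_j T_{\sfrak_i x} = (e_j I_i) \Lotimes_\Lambda T_x = e_j \Lambda \otimes_\Lambda T_x = e_j T_x$.

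The algebraic identifications are routine; I expect the only point requiring genuine care to be the transfer of the \emph{minimal approximation} property through $F$, since \cref{lemma:approximations} furnishes approximations in $\md\Lambda$ while the present lemma demands them in $\Dcal(\Lambda)$ with respect to $\add((1-e_i)T_x)$. The potential obstacle — that extra derived morphisms or shifts might spoil the factorization or minimality — disappears precisely because all source and target objects involved are projective modules in degree $0$, so their derived Hom-spaces reduce to ordinary module homomorphisms.
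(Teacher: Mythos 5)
Your proof is correct and follows essentially the same route as the paper's own argument: apply $- \Lotimes_\Lambda T_x$, which is a triangle self-equivalence of $\Dcal(\Lambda)$ by \cref{theorem:rickard}, to the triangle induced by the short exact sequence in \cref{lemma:approximations} and to the identities $e_j I_i = e_j \Lambda$ for $j \neq i$. The details you spell out --- the idempotent computation showing $e_j I_i = e_j \Lambda$, and the fact that the approximation and minimality properties transfer through the equivalence because all source and target objects are modules concentrated in degree zero, so derived Hom-spaces reduce to module Hom-spaces --- are precisely the points the paper's two-line proof leaves implicit.
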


\begin{proof}
Apply $- \Lotimes_\Lambda T_x$ to the triangle $e_i \Lambda \to e_{i-1} \Lambda \oplus e_{i+1} \Lambda \to e_i I_i \to \cdot \:$ induced by the sequence in \cref{lemma:approximations} and to the identities $e_j I_i = e_j \Lambda$.
Then use \cref{theorem:rickard}.
\end{proof}

\begin{corollary}
 \label{corollary:arrows-in-hasse-diagram}
 There is an arrow $T_x \xrightarrow{\pad[1ex]{}} T_{\sfrak_i x}$ in the quiver $Q(\tilt^\bullet \Lambda)$ for all $x \in \Bcal_+$ and $1 \leq i < n$. \end{corollary}

\begin{proof}
 Use \cref{lemma:mutation-triangle} and \cite[Theorem~2.35]{Aihara.Iyama:12}.
\end{proof}

For $x \in \Bcal_+$ and $1 \leq i \leq n$ it makes sense to refer to $e_i T_x$ as the \emph{$i$-th summand} of $T_x$, since by \cref{theorem:rickard}
\[
 \dim_K \End_{\Dcal(\Lambda)}(e_i T_x)
 \:=\:
 \dim_K \End_\Lambda(e_i \Lambda)
 \:=\:
 i
 \,.
\]
We write \smash{$T_x \xrightarrow{\pad[0.5ex]{i}} T_{\sfrak_i x}$} for an arrow $T_x \xrightarrow{\pad[0.5ex]{\phantom{i}}} T_{\sfrak_i x}$ in $Q(\tilt^\bullet \Lambda)$ to emphasize the fact that it corresponds to mutating the $i$-th summand.

\medskip

We close this section with an interesting observation that will enable us to determine the possible dimension vectors of tilting modules for $\Lambda$.
For this purpose, let $V = K_0(\Lambda)$ be the Grothendieck group of $\Dcal(\Lambda)$.
The symmetric group $\Scal$ acts on $V^n$ via
\[
  s_i \cdot (\ldots, v_{i-1}, v_i, v_{i+1}, \ldots)
  \:=\:
  (\ldots, v_{i-1}, v_{i-1} - v_i + v_{i+1}, v_{i+1}, \ldots)
\]
with $v_0 := 0$.
For each $x \in \Bcal_+$ we define $d(T_x)$ as the element in $V^n$ whose $i$-th component is the equivalence class of the $i$-th summand $e_i T_x$.
It can be computed by the following formula:

\begin{lemma}
 $d(T_x) = \overline{x} \cdot d(\Lambda)$ for all $x \in \Bcal_+$.
\end{lemma}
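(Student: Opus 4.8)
The plan is to prove the formula by induction on the length $\ell(x)$ of $x$ in $\Bcal_+$, reducing the general statement to a single mutation step that is governed by \cref{lemma:mutation-triangle}. The base case is immediate: the identity $1 \in \Bcal_+$ satisfies $T_1 = \Lambda$ and maps to the identity $\overline{1} = e \in \Scal$, whence $d(T_1) = d(\Lambda) = e \cdot d(\Lambda)$.

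The heart of the argument is the recursion
\[
 d(T_{\sfrak_i x}) \:=\: s_i \cdot d(T_x)
 \qquad\text{for all } x \in \Bcal_+,\ 1 \leq i < n.
\]
To establish it, I would pass the distinguished triangle
\[
 \begin{tikzcd}
  e_i T_x \ar[r] & e_{i-1} T_x \oplus e_{i+1} T_x \ar[r] & e_i T_{\sfrak_i x} \ar[r] & \cdot
 \end{tikzcd}
\]
of \cref{lemma:mutation-triangle} to the Grothendieck group $V = K_0(\Lambda)$. A distinguished triangle $A \to B \to C \to A[1]$ yields $[B] = [A] + [C]$, so reading off $[e_{i-1}T_x] + [e_{i+1}T_x] = [e_iT_x] + [e_iT_{\sfrak_i x}]$ and rearranging gives
\[
 [e_i T_{\sfrak_i x}] \:=\: [e_{i-1} T_x] - [e_i T_x] + [e_{i+1} T_x]
\]
(with $e_0 := 0$), which is exactly the prescription for the $i$-th component of $s_i \cdot d(T_x)$. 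Since the same lemma gives $e_j T_{\sfrak_i x} = e_j T_x$ for every $j \neq i$, all remaining components of $d(T_{\sfrak_i x})$ and $d(T_x)$ coincide, matching the fact that $s_i$ fixes the other entries. This proves the recursion.

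For the inductive step I would write a positive-length element as $x = \sfrak_i x'$ with $\ell(x') < \ell(x)$; then the recursion together with the inductive hypothesis gives
\[
 d(T_x) \:=\: s_i \cdot d(T_{x'}) \:=\: s_i \cdot \big(\overline{x'} \cdot d(\Lambda)\big) \:=\: (s_i\,\overline{x'}) \cdot d(\Lambda) \:=\: \overline{x} \cdot d(\Lambda),
\]
where the third equality uses that $V^n$ carries a genuine (left) $\Scal$-action, and the last uses that $\overline{\cdot} \colon \Bcal \to \Scal$ is a group homomorphism with $\overline{\sfrak_i} = s_i$, so $\overline{\sfrak_i x'} = s_i\,\overline{x'}$. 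The only delicate point is matching signs when converting the triangle into a linear relation in $K_0$; once the recursion is aligned with the generators of $\Scal$, the compatibility $s_i^2 \cdot v = v$ of the action (readily checked from the defining formula) ensures consistency of the induction, and everything else is bookkeeping.
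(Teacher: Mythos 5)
Your proposal is correct and takes essentially the same approach as the paper: induction on $\ell(x)$, with the recursion $d(T_{\sfrak_i x}) = s_i \cdot d(T_x)$ obtained by passing the triangle of \cref{lemma:mutation-triangle} to the Grothendieck group and using that the components with $j \neq i$ are unchanged. The paper's proof is just a terser version of this, leaving the $K_0$-additivity step implicit.
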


\begin{proof}
 The case $x = 1$ is trivial.
 Otherwise we write $x = \sfrak_i y$ for some $i$ and $y \in \Bcal_+$.
 Let $u := d(T_x)$.
 By induction we have $v := d(T_y) = \overline{y} \cdot d(\Lambda)$.
 With \cref{lemma:mutation-triangle} we get $u_i = v_{i-1} - v_i + v_{i+1}$ and $u_j = v_j$ for all $j \neq i$.
 We conclude $u = s_i \cdot v = \overline{\sfrak_i} \cdot (\overline{y} \cdot d(\Lambda)) = \overline{x} \cdot d(\Lambda)$.
\end{proof}

\begin{corollary}
 \label{corollary:tilting-summands-in-grothendieck-group}
 The set $\{ d(T_x) \,|\, x \in \Bcal_+ \} = \{ d(I_w) \,|\, w \in \Scal \}$ is finite.
\end{corollary}

\section{Tilting modules}

In this section we finally classify the tilting modules for the Auslander algebra $\Lambda$ and determine the poset structure of $\tilt\Lambda$.
We begin with four lemmata that serve as the main steps of the classification's proof.

\begin{lemma}
 \label{lemma:product-of-classical-tilting-is-tilting}
 $T_{\underline{v} \, \underline{w}} = I_v \Lotimes_\Lambda I_w \cong I_v \otimes_\Lambda I_w \in \tilt\Lambda$ for all $v, w \in \Scal$.
\end{lemma}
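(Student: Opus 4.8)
The plan is to split the statement into three parts: the identification $T_{\underline v\,\underline w} = I_v \Lotimes_\Lambda I_w$, the vanishing of higher $\Tor$ that upgrades the derived tensor product to an ordinary one, and the conclusion that the resulting module is tilting.

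First I would observe that the map $\Bcal_+ \to \DPic(\Lambda)$, $x \mapsto T_x$, is a monoid homomorphism by \cref{proposition:braid-group-to-derived-picard-group}, and that $T_{\underline u} = I_u$ for every $u \in \Scal$ by the computation carried out in its proof. Applying the homomorphism to the product $\underline v\,\underline w \in \Bcal_+$ then gives $T_{\underline v\,\underline w} = T_{\underline v} \Lotimes_\Lambda T_{\underline w} = I_v \Lotimes_\Lambda I_w$ in $\DPic(\Lambda)$, and in particular in $\Dcal(\Lambda)$.

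Next I would show $I_v \Lotimes_\Lambda I_w \cong I_v \otimes_\Lambda I_w$, which amounts to proving $\Tor^\Lambda_q(I_v, I_w) = 0$ for all $q > 0$. By \cref{theorem:classical-tilting-poset} the ideal $I_w$ is a classical tilting module over $\Lambda^\op$, so the left module ${}_\Lambda I_w$ has projective dimension at most $1$; hence $\Tor^\Lambda_q(-, I_w) = 0$ for all $q \geq 2$, and only $\Tor^\Lambda_1(I_v, I_w)$ remains. The key point is that $I_v$ is a \emph{submodule} of the free module $\Lambda_\Lambda$: tensoring the short exact sequence $0 \to I_v \to \Lambda \to \Lambda/I_v \to 0$ with ${}_\Lambda I_w$, the induced long exact sequence squeezes $\Tor^\Lambda_1(I_v, I_w)$ between $\Tor^\Lambda_2(\Lambda/I_v, I_w) = 0$ (again by $\projdim {}_\Lambda I_w \leq 1$) and $\Tor^\Lambda_1(\Lambda, I_w) = 0$ (flatness of $\Lambda$), so it vanishes. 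Since the homology of $I_v \Lotimes_\Lambda I_w$ in degree $-q$ is exactly $\Tor^\Lambda_q(I_v, I_w)$, the complex is concentrated in degree $0$ with $H^0 = I_v \otimes_\Lambda I_w$.

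Finally, $I_v \Lotimes_\Lambda I_w$ is a two-sided tilting complex, being a product in $\DPic(\Lambda)$, so by \cref{theorem:rickard} its underlying right complex $T_{\underline v\,\underline w}$ lies in $\tilt^\bullet\Lambda$. Having just shown it is isomorphic to the module $I_v \otimes_\Lambda I_w$ concentrated in degree $0$, this exhibits $I_v \otimes_\Lambda I_w$ as a tilting module, i.e.\ an element of $\tilt\Lambda$. I expect the only substantive step to be the vanishing of $\Tor^\Lambda_1(I_v, I_w)$; the observation that $I_v$ is an ideal (hence a submodule of a free module) together with $\projdim {}_\Lambda I_w \leq 1$ is exactly what reduces this to a short diagram chase, so I would flag careful bookkeeping of the two sides (the right module $I_v$ against the left module $I_w$) as the main thing to get right rather than any deep obstacle.
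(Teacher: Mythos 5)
Your proposal is correct and follows essentially the same route as the paper: identify $T_{\underline{v}\,\underline{w}}$ with $I_v \Lotimes_\Lambda I_w$ via \cref{proposition:braid-group-to-derived-picard-group} and \cref{theorem:rickard}, then kill the higher $\Tor$ by a dimension shift along a short exact sequence of the form $0 \to I \to \Lambda \to \Lambda/I \to 0$ combined with a projective dimension bound from \cref{theorem:classical-tilting-poset}. The only (immaterial) difference is that you shift along the sequence for $I_v$ and use $\projdim {}_\Lambda I_w \leq 1$, whereas the paper shifts along the sequence for $I_w$ and uses $\projdim (I_v)_\Lambda \leq 1$; your write-up also makes explicit the DPic/Rickard step that the paper leaves implicit, which is a nice touch.
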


\begin{proof}
 The short exact sequence $0 \to I_w \to \Lambda \to \Lambda/I_w \to 0$ shows that $\Tor^\Lambda_q(I_v, I_w) \cong \Tor^\Lambda_{q+1}(I_v, \Lambda/I_w)$ for all $q > 0$.
 Since $\projdim\: (I_v)_\Lambda \leq 1$, we see that \smash{$I_v \Lotimes_\Lambda I_w \cong I_v \otimes_\Lambda I_w$} is a module.
\end{proof}

We use the notation $[a,b]_L$ for the interval $\{x \in \Bcal \,|\, a \leq_L x \leq_L b \}$ and define the interval $[a,b]_R$ similarly.
Let $\Scal_- := (\Scal_+)^{-1}$ and $w_- := (w_+)^{-1}$.
The elements of
\[
 [w_-, w_+]
 \::=\:
 [w_-, w_+]_L
 \:=\:
 [w_-, w_+]_R
 \:=\:
 \Scal_+ \Scal_-
 \:=\:
 \Scal_- \Scal_+
\]
are the \emph{rational permutation braids} studied in \cite[Proposition~4.3]{Digne.Gobet:17}.

\medskip

It is not hard to see that for $x \in [w_-, w_+]$ the module $T_{w_+ x} \in \tilt\Lambda^\op$ corresponds to $T_{x^{-1} w_+} \in \tilt\Lambda$ under the isomorphism from \cref{lemma:tilting-cotilting-duality}.
We prove a special case:

\begin{lemma}
 \label{lemma:injective-cogenerator-is-square-of-longest-element}
 $T_{w_+^2} = D(\Lambda)$ in $\tilt\Lambda$.
\end{lemma}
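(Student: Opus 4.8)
The plan is to identify the tilting module $T_{w_+^2} = I_{w_0} \otimes_\Lambda I_{w_0}$, which is a tilting module by \cref{lemma:product-of-classical-tilting-is-tilting}, with the injective cogenerator $D(\Lambda)$. First I would record that $D(\Lambda)$ is the \emph{unique} basic tilting module all of whose indecomposable summands are injective: a basic tilting module has exactly $n$ pairwise non-isomorphic indecomposable summands and there are exactly $n$ indecomposable injectives $I(1), \ldots, I(n)$ (with $\soc I(i) = S_i$), so if every summand is injective the module is forced to be $\bigoplus_i I(i) = D(\Lambda)$. Equivalently, $D(\Lambda)$ is the minimum of $\tilt\Lambda$, being the image of the maximal element $\Lambda \in \tilt\Lambda^\op$ under the order-reversing isomorphism of \cref{lemma:tilting-cotilting-duality}. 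It therefore suffices to prove that each summand $e_i T_{w_+^2}$ is injective.

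The conceptual reason this should hold is that $w_+^2 = \underline{w_0}^2$ is the full twist, the generator of the centre of $\Bcal_n$, and the autoequivalence $- \Lotimes_\Lambda I_{w_0}^{\otimes 2} = - \Lotimes_\Lambda T_{w_+^2}$ of $\Dcal(\Lambda)$ ought to coincide with the Nakayama (Serre) functor $\nu := - \Lotimes_\Lambda D(\Lambda)$; granting an isomorphism $T_{w_+^2} \cong D(\Lambda)$ in $\DPic(\Lambda)$ one immediately gets $e_i T_{w_+^2} = \nu(e_i\Lambda) = I(i)$, and in particular the right-module statement of the lemma. As a consistency check I would use the formula $d(T_x) = \overline{x} \cdot d(\Lambda)$: since $\overline{w_+^2} = w_0^2$ is the identity of $\Scal$, it gives $d(T_{w_+^2}) = d(\Lambda)$. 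Moreover $\Lambda \cong \Lambda^\op$ (swap $\alpha_i \leftrightarrow \beta_{i+1}$), so the Cartan matrix is symmetric and hence $[e_i\Lambda] = [P_i] = [I(i)]$ in $K_0(\Lambda)$; thus $d(T_{w_+^2})$ already equals the vector of summand-classes of $D(\Lambda)$, exactly as expected if $e_i T_{w_+^2} \cong I(i)$.

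To upgrade this to an actual isomorphism I would compute the summands directly from the mutation triangles of \cref{lemma:mutation-triangle}, iterating $- \Lotimes_\Lambda I_i$ along a reduced word for $w_0$ performed twice. The projective-injective module $e_n\Lambda$, which is fixed by $\nu$, serves as an anchor: starting from it one propagates injectivity down the triangles to show inductively that after two passes every $e_i T_{w_+^2}$ is injective with socle $S_i$, whence $e_i T_{w_+^2} \cong I(i)$ and $T_{w_+^2} \cong D(\Lambda)$. The main obstacle is precisely this computation: the triangles of \cref{lemma:mutation-triangle} determine the summands only together with the approximation maps, so the real work lies in controlling the iterated cones and in \emph{upgrading} the $K_0$-level identity $d(T_{w_+^2}) = d(D(\Lambda))$ to a genuine module isomorphism — equivalently, in verifying that the half-twist functor $- \Lotimes_\Lambda I_{w_0}$ is a true square root of the Serre functor and not merely one inducing the identity on the Grothendieck group.
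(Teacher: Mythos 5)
Your opening reduction is correct: since $D(\Lambda)$ is injective, $\Ext^q_\Lambda(T,D(\Lambda)) = 0$ for all $q>0$ and all $T \in \tilt\Lambda$, so $D(\Lambda)$ is the minimum of $\tilt\Lambda$ and it suffices to identify $T_{w_+^2}$ with it. But that identification --- the entire content of the lemma --- is never carried out. The Serre-functor heuristic is circular: saying that $-\Lotimes_\Lambda T_{w_+^2}$ coincides with the Nakayama functor $-\Lotimes_\Lambda D(\Lambda)$ is exactly the assertion $T_{w_+^2} \cong D(\Lambda)$ in $\DPic(\Lambda)$, which is what has to be proved. The $K_0$ computation is, as you concede, only a consistency check: already for $n=2$ the modules $e_1\Lambda_2$ and $D(\Lambda_2 e_1)$ have equal classes in $K_0$ but are not isomorphic, so $d(T_{w_+^2}) = d(D(\Lambda))$ cannot be formally upgraded. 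Finally, the proposed induction (``propagate injectivity down the triangles over two passes of a reduced word'') cannot be completed as stated: the intermediate tilting modules $T_x$ with $1 <_L x <_L w_+^2$ have non-injective summands --- injectivity of all summands materializes only at the very last step --- so there is no injectivity to propagate, and \cref{lemma:mutation-triangle} determines each new summand only as the cone of an approximation map over which you have no control after several mutations. You name this obstacle yourself; it is precisely the missing proof, not a technical detail.

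For comparison, the paper's proof avoids all cone computations by a duality trick. By \cite{Brustle.Hille.Ringel.ea:99}, $(I_{w_0})_\Lambda$ is the \emph{unique} tilting module with $\projdim \leq 1$ and $\injdim \leq 1$, and likewise ${}_\Lambda (I_{w_0})$ over $\Lambda^\op$; since the duality $D$ of \cref{lemma:tilting-cotilting-duality} swaps projective and injective dimension, uniqueness forces $(I_{w_0})_\Lambda \cong D({}_\Lambda (I_{w_0}))$. Dualizing the path ${}_\Lambda\Lambda \to \cdots \to {}_\Lambda(I_{w_0})$ in $Q(\tilt\Lambda^\op)$ supplied by \cref{theorem:classical-tilting-poset} then yields a path $I_{w_0} \to \cdots \to D(\Lambda)$ in $Q(\tilt\Lambda)$, which concatenates with the path $\Lambda \to \cdots \to I_{w_0}$; since by \cref{corollary:arrows-in-hasse-diagram} the arrow out of $T_x$ mutating the $i$-th summand is $T_x \to T_{\sfrak_i x}$, the endpoint of this doubled path of length $2\ell(w_0)$ is $T_{w_+^2}$, giving $T_{w_+^2} = D(\Lambda)$. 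If you want to salvage your approach, this combination of the two one-sided uniqueness statements with Hasse-diagram paths is what replaces ``controlling the iterated cones.''
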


\begin{proof}
 Let $(i_1,\ldots,i_\ell)$ be a reduced expression for $w_0$.
 By \cref{theorem:classical-tilting-poset} there are paths
 \[
  \begin{tikzcd}[
  	row sep = 1ex,
    /tikz/column 4/.append style={anchor=base west}
   ]
   \Lambda_\Lambda
   \ar[r, "i_\ell"]
   &
   \cdots
   \ar[r, "i_1"]
   &
   (I_{w_0})_\Lambda
   &
   \text{in $Q(\tilt\Lambda)$ and}
   \\
   {}_\Lambda \Lambda
   \ar[r, "i_1"]
   &
   \cdots
   \ar[r, "i_\ell"]
   &
   {}_\Lambda (I_{w_0})
   &
   \text{in $Q(\tilt\Lambda^\op)$.}
  \end{tikzcd}
 \]
 Now, $(I_{w_0})_\Lambda$ is the unique module $T_\Lambda \in \tilt\Lambda$ with $\projdim\: T_\Lambda \leq 1$ and $\injdim\: T_\Lambda \leq 1$ (see \cite{Brustle.Hille.Ringel.ea:99}).
 Similarly, ${}_\Lambda (I_{w_0})$ is the unique module ${}_\Lambda T \in \tilt\Lambda^\op$ with $\projdim\: {}_\Lambda T \leq 1$ and $\injdim\: {}_\Lambda T \leq 1$.
 Because of \cref{lemma:tilting-cotilting-duality} we must have $(I_{w_0})_\Lambda \cong D({}_\Lambda (I_{w_0}))$ such that there is a path
 \[
  \begin{tikzcd}[row sep = 1ex]
   \Lambda
   \ar[r, "i_\ell"]
   &
   \cdots
   \ar[r, "i_1"]
   &
   I_{w_0}
   \ar[r, "i_\ell"]
   &
   \cdots
   \ar[r, "i_1"]
   &
   D(\Lambda)
  \end{tikzcd}
 \]
 in $Q(\tilt\Lambda)$.
 \Cref{corollary:arrows-in-hasse-diagram} now implies $T_{w_+^2} = D(\Lambda)$ in $\tilt\Lambda$.
\end{proof}

The next insight is a consequence of Voigt's lemma.

\begin{lemma}
 \label{lemma:only-finitely-many-tilting-modules}
 The set $\mathbb{T} = \{ T_x \,|\, x \in \Bcal_+ \} \cap \tilt\Lambda$ is finite.
\end{lemma}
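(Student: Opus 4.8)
The plan is to combine the rigidity of tilting modules with the finiteness of the dimension vectors they can realize. Every module in $\mathbb{T}$ is a tilting module and hence self-orthogonal, so in particular $\Ext^1_\Lambda(T_x,T_x) = 0$ whenever $T_x \in \mathbb{T}$. At the same time, \cref{corollary:tilting-summands-in-grothendieck-group} guarantees that the classes $d(T_x)$ form a finite set as $x$ ranges over $\Bcal_+$. Since the total class of $T_x$ in $K_0(\Lambda)$ is the sum of the components of $d(T_x)$, the modules in $\mathbb{T}$ realize only finitely many dimension vectors. It therefore suffices to show that, for a fixed dimension vector $d$, only finitely many isomorphism classes of rigid modules occur.

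First I would pass to the affine module variety $\mathrm{mod}(\Lambda,d)$ parameterizing the $\Lambda$-module structures on $K^d$, on which $\mathrm{GL}_d$ acts with orbits corresponding bijectively to the isomorphism classes of $d$-dimensional modules. Voigt's lemma bounds the codimension of the orbit $\mathcal{O}_M$ of a point $M$ by $\dim_K\Ext^1_\Lambda(M,M)$. Hence, whenever $M$ is rigid, its orbit $\mathcal{O}_M$ is open in $\mathrm{mod}(\Lambda,d)$.

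The decisive step is then the usual irreducibility argument. Distinct orbits are disjoint, and two nonempty open subsets of an irreducible variety must intersect; therefore each irreducible component of $\mathrm{mod}(\Lambda,d)$ contains at most one open orbit. As the affine variety $\mathrm{mod}(\Lambda,d)$ has only finitely many irreducible components, there are only finitely many isomorphism classes of rigid modules of dimension vector $d$. Together with the finiteness of the set of dimension vectors realized by $\mathbb{T}$, this proves that $\mathbb{T}$ is finite.

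I expect the only real friction to be bookkeeping: confirming that $d(T_x)$ determines the dimension vector of $T_x$ so that \cref{corollary:tilting-summands-in-grothendieck-group} can be invoked, and quoting Voigt's lemma in the exact form needed for the codimension estimate. The geometric core --- rigid implies open orbit implies at most one such orbit per component --- is entirely standard once the module variety has been set up.
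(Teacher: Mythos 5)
Your proof is correct and takes essentially the same route as the paper: both arguments combine \cref{corollary:tilting-summands-in-grothendieck-group} (so that the modules in $\mathbb{T}$ realize only finitely many dimension vectors, since the class of a module in $K_0(\Lambda)$ determines its dimension) with the fact that there are only finitely many isomorphism classes of rigid modules of each fixed dimension. The sole difference is that the paper outsources this last step to \cite[Corollary~9]{Huisgen-Zimmermann.Saorin:01}, whereas you reprove it directly via Voigt's lemma and the open-orbit count on the module variety, which is exactly the content of the cited result.
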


\begin{proof}
 Let $X = \{ \dim_K T_x \,|\, T_x \in \mathbb{T} \}$.
 On the one hand, for each $d \in X$ the set $\{ T_x \in \mathbb{T} \,|\, \dim_K T_x = d \}$ is finite because of \cite[Corollary~9]{Huisgen-Zimmermann.Saorin:01}.
 On the other hand, \cref{corollary:tilting-summands-in-grothendieck-group} implies $X \subseteq \{ \dim_K I_w \,|\, w \in \Scal \}$, so $X$ is finite, too.
 This proves the claim.
\end{proof}

We formulate one last lemma before turning to the classification.

\begin{lemma}
 \label{lemma:tilting-quiver}
 Let $Q'$ be the full subquiver of $Q = Q(\tilt\Lambda)$ spanned by~$\mathbb{T}$.
 Then $Q' = Q$ and every arrow in this quiver is of the form \smash{$T_x \xrightarrow{\pad[2pt]{i}} T_{\sfrak_i x}$} for some $x \in \Bcal_+$ and $1 \leq i < n$.
\end{lemma}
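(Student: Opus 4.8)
The plan is to apply \cref{corollary:finite-successor-closed-tilting-subquiver} to the subquiver $Q'$. By \cref{lemma:only-finitely-many-tilting-modules} the vertex set $\mathbb{T}$ is finite, and $\Lambda = T_1$ lies in $\mathbb{T}$, so $Q'$ is a finite subquiver of $Q(\tilt\Lambda)$ containing $\Lambda$. It therefore suffices to prove that $Q'$ is successor-closed, and I would arrange the argument so as to read off the claimed shape of the arrows at the same time. Accordingly I fix $T_x \in \mathbb{T}$ together with a Hasse arrow $T_x \to T'$ in $Q(\tilt\Lambda)$ and aim to show $T' = T_{\sfrak_i x}$ for some $1 \leq i < n$.

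By the Happel--Unger theory of tilting mutation, such an arrow is a single-summand exchange: $T_x$ and $T'$ share all their indecomposable summands but one. Since $e_n\Lambda$ is projective-injective, \cref{theorem:auslander-reiten} gives $e_n\Lambda \in T^\perp \cap {}^\perp(T^\perp) = \add T$ for every $T \in \tilt\Lambda$, so $e_n\Lambda$ is a common summand of $T_x$ and $T'$. As the summands $e_1 T_x, \ldots, e_n T_x$ are pairwise non-isomorphic, satisfying $\dim_K \End(e_j T_x) = j$, the projective-injective summand is $e_n T_x = e_n\Lambda$ (the unique one with $\dim_K \End = n$) and is hence preserved by the exchange; therefore the exchanged summand is $e_i T_x$ for a unique index $1 \leq i < n$.

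It remains to identify this exchange with the mutation triangle. The downward exchange removing $e_i T_x$ is governed by the minimal left $\add((1-e_i)T_x)$-approximation of $e_i T_x$, which by \cref{lemma:mutation-triangle} is exactly the map $\iota$ whose cone is $e_i T_{\sfrak_i x}$. Because $T'$ is a module, this approximation must be injective, so its cone is a cokernel concentrated in degree $0$; hence $e_i T' = e_i T_{\sfrak_i x}$ is a module and, the remaining summands being unchanged, $T' = T_{\sfrak_i x} \in \mathbb{T}$. This simultaneously shows that $Q'$ is successor-closed and that every arrow has the asserted form $T_x \xrightarrow{i} T_{\sfrak_i x}$, whereupon \cref{corollary:finite-successor-closed-tilting-subquiver} yields $Q' = Q$.

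The step I expect to be most delicate is the identification in the last paragraph: one must be certain that the abstract single-summand exchange realizing the Hasse arrow $T_x \to T'$ is precisely the left mutation at the $i$-th summand furnished by \cref{lemma:mutation-triangle}, and in particular that no tilting module can sit strictly below $T_x$ while differing only in its $i$-th summand once $\iota$ fails to be injective. This is exactly what pins $T'$ down as the single mutation $T_{\sfrak_i x}$ rather than an iterated one $T_{\sfrak_i^k x}$ passing through intermediate complexes, and it is where the explicit description of $\iota$ together with the uniqueness of minimal left approximations does the real work.
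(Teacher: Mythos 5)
Your proof is correct and takes essentially the same route as the paper: both arguments show $Q'$ is successor-closed by describing a Hasse arrow $T_x \to T$ as a single-summand exchange governed by a minimal left $\add((1-e_i)T_x)$-approximation (the paper cites \cite[\S~1]{Happel.Unger:05*1} for this), use the projective-injective $e_n\Lambda$ to rule out $i = n$, identify the new summand with $e_i T_{\sfrak_i x}$ via the uniqueness of minimal approximations from \cref{lemma:mutation-triangle}, and then conclude with \cref{lemma:only-finitely-many-tilting-modules} and \cref{corollary:finite-successor-closed-tilting-subquiver}. The step you flag as delicate is exactly the one the paper settles the same way, by matching the Happel--Unger exchange sequence against the explicit approximation of \cref{lemma:mutation-triangle}.
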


\begin{proof}
 We show that $Q'$ is a successor-closed subquiver of $Q$ so that, using \cref{lemma:only-finitely-many-tilting-modules}, \cref{corollary:finite-successor-closed-tilting-subquiver} is applicable:
 Let $T_x \to T$ be an arrow in~$Q$ for some $x \in \Bcal_+$.
 According to \cite[\S~1]{Happel.Unger:05*1}, there exist $1 \leq i \leq n$, an indecomposable $\Lambda$-module $Y$ such that $T = (1-e_i) T_x \oplus Y$ and a short exact sequence
 \smash{$
  0
  \to
  e_i T_x
  \xrightarrow{\iota}
  E
  \to
  Y
  \to
  0
 $}
 in which $\iota$ is a minimal left $\add((1-e_i) T_x)$-approximation.
 Given that the projective-injective module~$e_n \Lambda$ appears as a summand of every tilting module, we have $e_n T_x \cong e_n \Lambda$, so $i \neq n$.
 Thus $Y \cong e_i T_{\sfrak_i x}$ and $T = T_{\sfrak_i x}$ by \cref{lemma:mutation-triangle}.
\end{proof}

Now we are ready to prove our main result.

\begin{theorem}
 \label{theorem:tilting-poset}
 There is a poset isomorphism:
 \[
  \begin{tikzcd}[row sep = 0.1cm]
   [1, w_+^2]_L  \ar[r] & (\tilt\Lambda)^\op
   \\
   \pad[1em]{x}\hspace{1em} \ar[r, mapsto] & \pad[1em]{T_x}\hspace{1em}
  \end{tikzcd}
 \]
\end{theorem}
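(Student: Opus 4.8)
The plan is to show that $f\colon x\mapsto T_x$ is a well-defined, monotone \emph{bijection} from $[1,w_+^2]_L$ onto $(\tilt\Lambda)^\op$ which matches up covering relations; since both posets are finite, a cover-preserving bijection is automatically a poset isomorphism. First I would pin down the domain combinatorially. In the Garside monoid $(\Bcal_+,w_+)$ the right-divisors of $w_+^2$ are exactly the products of two simple elements, so $[1,w_+^2]_L=\Scal_+\Scal_+$ (using that $w_+^2$ is central and that the reversal anti-automorphism fixes $w_+$ and preserves $\Scal_+$; cf.\ \cite{Kassel.Turaev:08}). Writing any $x$ in this interval as $x=\underline v\,\underline w$, \cref{lemma:product-of-classical-tilting-is-tilting} gives $T_x\cong I_v\otimes_\Lambda I_w\in\tilt\Lambda$, so $f$ lands in $\tilt\Lambda$; and \cref{proposition:braid-monoid-to-tilting-complexes} shows $f$ is a \emph{strict} morphism of posets into $(\tilt\Lambda)^\op$. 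Together with \cref{lemma:injective-cogenerator-is-square-of-longest-element} this already fixes the two ends: $T_1=\Lambda$ is the maximum of $\tilt\Lambda$ and $T_{w_+^2}=D(\Lambda)$ lies strictly below every $T_x$ with $x\in[1,w_+^2]_L$; by strictness $w_+^2$ is the unique interval-preimage of $D(\Lambda)$ (if $x<_L w_+^2$ then $T_x>D(\Lambda)$).

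For surjectivity I would invoke the quiver lemmas: \cref{lemma:only-finitely-many-tilting-modules,lemma:tilting-quiver} identify $Q(\tilt\Lambda)$ with the full subquiver on $\mathbb{T}$ and show every arrow has the form $T_x\to T_{\sfrak_i x}$. Since $\Lambda=T_1$ is the maximum, following directed paths downward from $\Lambda$ exhibits every tilting module as $T_x$ for some $x\in\Bcal_+$ built by left-multiplying generators. The single point that makes this land in the interval is the containment
\[
 S:=\{\,x\in\Bcal_+\mid T_x\in\tilt\Lambda\,\}\subseteq[1,w_+^2]_L,
\]
that is, a positive braid with $T_x$ an honest module must have canonical length at most two. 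I expect this to be the \emph{main obstacle}: it is the homological heart of the statement, asserting that the complex $T_x=I_{i_1}\Lotimes_\Lambda\cdots\Lotimes_\Lambda I_{i_\ell}$ can be concentrated in a single degree only when the left-normal form of $x$ uses at most two simples. I would attack it through $\gldim\Lambda=2$, using that the amplitude of $T_x$ in $\Dcal(\Lambda)$ grows with the Garside normal length, so that vanishing of all negative cohomology forces $x\in\Scal_+\Scal_+$.

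Granting $S\subseteq[1,w_+^2]_L$, injectivity follows by a clean downward induction on $\tilt\Lambda$, with base case the unique preimage $w_+^2$ of $D(\Lambda)$ established above. If $T_x=T_{x'}=M$ with $x,x'$ in the interval and $M\neq D(\Lambda)$, then $M$ admits a covering arrow; by \cref{lemma:tilting-quiver} its target is simultaneously $T_{\sfrak_i x}$ and $T_{\sfrak_i x'}$ for the \emph{intrinsically determined} index $i$ (recoverable because $\dim_K\End_{\Dcal(\Lambda)}(e_iT_x)=i$). This target lies strictly below $M$ and has preimages $\sfrak_i x,\sfrak_i x'\in S\subseteq[1,w_+^2]_L$, so the induction hypothesis gives $\sfrak_i x=\sfrak_i x'$, whence left-cancellation in $\Bcal_+$ yields $x=x'$.

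Finally I would assemble the pieces. The covers of $([1,w_+^2]_L,\ge_L)$ are precisely the relations $x\lessdot_L \sfrak_i x$, which $f$ sends to the arrows $T_x\to T_{\sfrak_i x}$ of $Q(\tilt\Lambda)$, and \cref{lemma:tilting-quiver} guarantees every arrow of $Q(\tilt\Lambda)$ arises this way. Thus $f$ is a cover-preserving bijection between two finite posets and therefore an isomorphism onto $(\tilt\Lambda)^\op$. The only genuinely substantial input beyond the four preparatory lemmata is the amplitude bound $S\subseteq[1,w_+^2]_L$; everything else is bookkeeping with strict monotonicity, the intrinsic mutation index, and cancellation in the braid monoid.
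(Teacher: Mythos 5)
There is a genuine gap, and you have located it yourself: the containment $S := \{\, x \in \Bcal_+ \mid T_x \in \tilt\Lambda \,\} \subseteq [1,w_+^2]_L$ is never proved. Everything else in your argument is conditional on it --- surjectivity obviously, but also your injectivity induction: the base case (uniqueness of the preimage $w_+^2$ of $D(\Lambda)$) only excludes preimages \emph{inside} the interval, and the inductive step needs $\sfrak_i x, \sfrak_i x' \in [1,w_+^2]_L$ before the inductive hypothesis may be invoked, which is exactly the containment again. The one sentence you offer for it --- that the amplitude of $T_x = I_{i_1} \Lotimes_\Lambda \cdots \Lotimes_\Lambda I_{i_\ell}$ grows with the Garside normal length of $x$ --- is a nontrivial homological assertion that you neither prove nor reduce to the four preparatory lemmata, so the proof collapses at its self-declared heart.

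The paper closes this gap with no amplitude computation at all, by two steps your outline is missing. First, a descent statement (step (1) of the paper's proof): if $T_x \geq T_y$ are tilting modules with $x, y \in \Bcal_+$, then $T_y = T_{zx}$ for some $z \in \Bcal_+$; this follows by iterating \cref{theorem:happel-unger} along the arrows described in \cref{lemma:tilting-quiver}, with \cref{lemma:only-finitely-many-tilting-modules} forcing the resulting path to terminate. Second --- and this is the real substitute for your amplitude bound --- an \emph{unconditional} injectivity statement (step (2)): $T_x = T_y \in \tilt\Lambda$ with $x, y \in \Bcal_+$ arbitrary, no interval hypothesis, forces $x = y$. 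This is proved by induction on $\min\{\ell(x),\ell(y)\}$ following \cite[Lemma~6.4]{Aihara.Mizuno:17}: write $x = x'\sfrak_i$, $y = y'\sfrak_j$, use the join $\sfrak_{ij}$ of $\sfrak_i, \sfrak_j$ in $(\Scal_+,\geq_L)$ together with the identity $T_{\sfrak_{ij}}^\perp = T_{\sfrak_i}^\perp \cap T_{\sfrak_j}^\perp$ (from \cite{Iyama.Zhang:16}, \cite{Iyama.Reiten.Thomas.ea:15}, \cite{Adachi.Iyama.Reiten:14}) and \cref{remark:tilting-order} to produce $z$ with $T_{z\sfrak_{ij}} = T_x$, then cancel $I_i$ resp.\ $I_j$ in $\DPic(\Lambda)$ via \cref{proposition:braid-group-to-derived-picard-group} and apply the inductive hypothesis twice. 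Your containment then drops out formally: for $T_x \in \tilt\Lambda$ one has $T_x \geq D(\Lambda) = T_{w_+^2}$ by \cref{lemma:injective-cogenerator-is-square-of-longest-element} (since $D(\Lambda)$ is the minimum of $\tilt\Lambda$), so (1) yields $z$ with $T_{zx} = T_{w_+^2}$, and (2) yields $zx = w_+^2$, i.e.\ $x \leq_L w_+^2$. In short, the missing idea is not a finer homological analysis of the complexes $T_x$, but the pincer of descent plus global injectivity: injectivity must be established on all of $\Bcal_+$ \emph{first}, rather than deduced afterwards on the interval as in your outline.
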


\begin{proof}
 The map is well-defined by \cref{lemma:product-of-classical-tilting-is-tilting} because $[1, w_+^2]_L = \Scal_+ \Scal_+$.
 According to \cref{proposition:braid-monoid-to-tilting-complexes} it is a morphism of posets.

 \medskip
 (1)
 \emph{For all $T_x, T_y \in \tilt\Lambda$ with $x, y \in \Bcal_+$ and $T_x \geq T_y$ there is $z \in \Bcal_+$ with $T_{zx} = T_y$:}
 Given a path \smash{$T_x = T_{x_0} \xrightarrow{} \cdots \xrightarrow{} T_{x_\ell}$} in $Q$ with $T_{x_\ell} \geq T_y$ and $x_k = \sfrak_{i_k} \cdots \sfrak_{i_1} x$ for all $0 \leq k \leq \ell$, either $T_{x_\ell} = T_y$ or by \cref{theorem:happel-unger,lemma:tilting-quiver} there is an arrow \smash{$T_{x_\ell} \xrightarrow{} T_{x_{\ell+1}}$} in~$Q$ with $T_{x_{\ell+1}} \geq T_y$ and $x_{\ell+1} = \sfrak_{i_{\ell+1}} x_{\ell}$ for some $i_{\ell+1}$.
 If our claim were false, we would get an infinite path \smash{$T_{x_0} \xrightarrow{} \cdots \xrightarrow{} T_{x_\ell} \xrightarrow{} \cdots$} in contradiction to \cref{lemma:only-finitely-many-tilting-modules}.

 \medskip
 (2)
 \emph{For all $T \in \tilt\Lambda$ and $x, y \in \Bcal_+$ with $T_x = T = T_y$ we have $x = y$:}
 Our argument uses induction on $\ell = \min\{\ell(x), \ell(y)\}$ and follows \cite[Lemma~6.4]{Aihara.Mizuno:17}.
 If $\ell = 0$, we have $T = \Lambda$, so $x = 1 = y$ by \cref{proposition:braid-monoid-to-tilting-complexes}.
 Otherwise we can write $x = x' \sfrak_i$, $y = y' \sfrak_j$ for some $i, j$ and $x', y' \in \Bcal_+$.
 Let $\sfrak_{ij}$ be the join of the elements $\sfrak_i$ and $\sfrak_j$ in the lattice $(\Scal_+, \geq_L)$, i.e.
 \[
  \sfrak_{ij}
  \:=\:
  \left\{
  \begin{array}{cl}
    \sfrak_i = \sfrak_j
    &
    \text{if $i = j$,}
    \\
    \sfrak_i \sfrak_j = \sfrak_j \sfrak_i
    &
    \text{if $|i - j| > 1$,}
    \\
    \sfrak_i \sfrak_j \sfrak_i = \sfrak_j \sfrak_i \sfrak_j
    &
    \text{if $|i - j| = 1$.}
  \end{array}
  \right.
 \]
 Then \smash{$T_{\sfrak_{ij}}^\perp = T_{\sfrak_i}^\perp \cap T_{\sfrak_j}^\perp$} with \cite[Theorem~4.12]{Iyama.Zhang:16}, \cite[Remark~1.13]{Iyama.Reiten.Thomas.ea:15} and  \cite{Adachi.Iyama.Reiten:14}.
 Now $T_{\sfrak_i} \geq T$ and $T_{\sfrak_j} \geq T$ because of $x \geq_L \sfrak_i$ and $y \geq_L \sfrak_j$.
 Hence, $T_{\sfrak_{ij}} \geq T$ by \cref{remark:tilting-order}.
 So by (1) there is $z \in \Bcal_+$ with $T_{z \sfrak_{ij}} = T$.
 Consequently, $T_{z_i} = T_{x'}$ and $T_{z_j} = T_{y'}$ for \smash{$z_i = z \sfrak_{ij} \sfrak_i^{-1}$} and \smash{$z_j = z \sfrak_{ij} \sfrak_j^{-1}$} by \cref{proposition:braid-group-to-derived-picard-group}.
 Without loss of generality we may assume \smash{$\ell(x) = \ell$} so that $z_i = x'$ by induction.
 Because of $\ell(z_j) = \ell(z_i) = \ell - 1$ induction also gives $z_j = y'$.
 Thus $x = z \sfrak_{ij} = y$.

 \medskip
 (3)
 \emph{Injectivity:}
 Follows immediately from (2).

 \medskip
 (4)
 \emph{Surjectivity:}
 By \cref{lemma:tilting-quiver} it suffices to check for each $x \in \Bcal_+$ with $T_x \in \tilt\Lambda$ that $x \in [1,w_+^2]_L$.
 Firstly, we have \smash{$T_x \geq D(\Lambda) = T_{w_+^2}$} due to \cref{lemma:injective-cogenerator-is-square-of-longest-element}.
 Secondly, there exists $z \in \Bcal_+$ with \smash{$T_{zx} = T_{w_+^2}$} by~(1).
 Finally, we conclude $zx = w_+^2$ with (2), so $x \in [1,w_+^2]_L$.
\end{proof}

\begin{corollary}
 \label{corollary:tilting-modules}
 The tensor products $I_v \otimes_\Lambda I_w$ with $v, w \in \Scal$ are the basic tilting modules for~$\Lambda$.
\end{corollary}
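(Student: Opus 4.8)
The plan is to read the corollary off directly from \cref{theorem:tilting-poset}, which already establishes that the tilting modules for $\Lambda$ are precisely the $T_x$ with $x \in [1, w_+^2]_L$. All that remains is to translate the braid-monoid bookkeeping into the language of tensor products of the ideals $I_w$, and the mechanism for that is \cref{lemma:product-of-classical-tilting-is-tilting}.

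First I would recall the identity $[1, w_+^2]_L = \Scal_+ \Scal_+$, which was already invoked in the proof of \cref{theorem:tilting-poset} to see that the map is well-defined. This says exactly that every element $x$ of the interval factors as $x = \underline{v}\,\underline{w}$ for some $v, w \in \Scal$. Feeding such a factorization into \cref{lemma:product-of-classical-tilting-is-tilting} yields $T_x = T_{\underline{v}\,\underline{w}} \cong I_v \otimes_\Lambda I_w$, so every tilting module in the image of the isomorphism is of the claimed form.

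Conversely, for any $v, w \in \Scal$ the product $\underline{v}\,\underline{w}$ lies in $\Scal_+ \Scal_+ = [1, w_+^2]_L$, and \cref{lemma:product-of-classical-tilting-is-tilting} again gives $I_v \otimes_\Lambda I_w \cong T_{\underline{v}\,\underline{w}} \in \tilt\Lambda$. Combining this with the surjectivity built into the poset isomorphism of \cref{theorem:tilting-poset}, the tensor products $I_v \otimes_\Lambda I_w$ are exactly the tilting modules of $\Lambda$, and since the poset $\tilt\Lambda$ consists of equivalence classes under $\add$, these represent precisely the basic tilting modules.

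I do not expect a genuine obstacle here: the substantive content is already packaged in \cref{theorem:tilting-poset} and \cref{lemma:product-of-classical-tilting-is-tilting}, so the corollary is essentially a restatement. The only subtlety worth flagging is that the factorization $x = \underline{v}\,\underline{w}$ need not be unique, so the family $\{I_v \otimes_\Lambda I_w\}$ is complete but in general redundant; extracting an irredundant parameterization (via the rational permutation braids and pairs of permutations without a common right descent) is the job of the later refinement rather than of this corollary.
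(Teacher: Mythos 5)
Your proposal is correct and matches the paper's own (implicit) reasoning exactly: the paper states this corollary without a separate proof precisely because it follows from \cref{theorem:tilting-poset} together with the factorization $[1, w_+^2]_L = \Scal_+ \Scal_+$ and \cref{lemma:product-of-classical-tilting-is-tilting}, which is the argument you spell out. Your closing remark about redundancy of the parameterization and its resolution via \cref{corollary:tilting-poset} is also accurate.
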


Recall that a $\Lambda$-module $E$ is called \emph{exceptional} if it is self-orthogonal and $\End_\Lambda(E) = K$.

\begin{corollary}
 The modules $e_1 (I_v \otimes_\Lambda I_w)$ with $v, w \in \Scal$ are the exceptional modules for $\Lambda$.
\end{corollary}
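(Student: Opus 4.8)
The plan is to prove the two inclusions separately, leaning on the fact that every exceptional module is self-orthogonal together with the classification of basic tilting modules already obtained. Throughout I write $T_x = I_v \otimes_\Lambda I_w$ for $x = \underline{v}\,\underline{w} \in \Scal_+\Scal_+$, so that by \cref{corollary:tilting-modules} the $T_x$ are precisely the basic tilting modules. First I would record that the indecomposable direct summands of each such $T_x$ are exactly $e_1 T_x, \ldots, e_n T_x$: we have $T_x = \bigoplus_{i=1}^n e_i T_x$ as a right module, each summand is nonzero because $\dim_K \End_\Lambda(e_i T_x) = \dim_K \End_\Lambda(e_i \Lambda) = i \geq 1$, and since $T_x$ is basic with exactly $n$ pairwise non-isomorphic indecomposable summands, Krull--Schmidt forces each $e_i T_x$ to be indecomposable.

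For the inclusion that every $e_1(I_v \otimes_\Lambda I_w)$ is exceptional, set $E = e_1 T_x$. As a direct summand of the tilting module $T_x$ it satisfies $\Ext^q_\Lambda(E, E) = 0$ for all $q > 0$, hence is self-orthogonal, and the dimension formula gives $\dim_K \End_\Lambda(E) = 1$, so $\End_\Lambda(E) = K$. Thus $E$ is exceptional.

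Conversely, let $E$ be an exceptional module. Since $\gldim\Lambda \leq 2$ and $\tilt\Lambda$ is finite by \cref{corollary:tilting-modules}, \cref{corollary:self-orthogonals-are-partial-tilting} applies to the self-orthogonal module $E$, exhibiting it as a direct summand of a tilting module; replacing the latter by its basic version and invoking \cref{corollary:tilting-modules} again, I may assume $E$ is a summand of some $T_x = I_v \otimes_\Lambda I_w$. Because $\End_\Lambda(E) = K$ contains no nontrivial idempotents, $E$ is indecomposable, so $E \cong e_i T_x$ for some $i$ by the first paragraph. Finally the dimension formula yields $i = \dim_K \End_\Lambda(e_i T_x) = \dim_K \End_\Lambda(E) = 1$, whence $E \cong e_1(I_v \otimes_\Lambda I_w)$.

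The only delicate point is the identification of the indecomposable summands of $T_x$ with the modules $e_i T_x$; once this is secured, both inclusions reduce to the elementary observation that $\dim_K \End_\Lambda(e_i T_x) = i$ equals $1$ exactly when $i = 1$.
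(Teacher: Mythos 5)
Your proposal is correct and takes essentially the same route as the paper: the paper's one-line proof invokes exactly the three ingredients you use, namely the dimension formula $\dim_K \End_\Lambda(e_i(I_v \otimes_\Lambda I_w)) = i$, \cref{corollary:tilting-modules}, and \cref{corollary:self-orthogonals-are-partial-tilting}. Your write-up simply makes explicit the details the paper leaves to the reader (the Krull--Schmidt identification of the indecomposable summands of a basic tilting module with the $e_i T_x$, and the two inclusions).
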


\begin{proof}
 Use $\dim_K \End_\Lambda(e_i (I_v \otimes_\Lambda I_w)) = i$ and \cref{corollary:tilting-modules,corollary:self-orthogonals-are-partial-tilting}.
\end{proof}

\begin{corollary}
 \label{corollary:tilting-poset}
 There is a poset isomorphism:
 \[
  \begin{tikzcd}[row sep = 0.1cm]
   [w_-, w_+]_R  \ar[r] & \tilt\Lambda
   \\
   \pad[2em]{x} \ar[r, mapsto] & \pad[0.2em]{T_{x^{-1} w_+}}
  \end{tikzcd}
 \]
\end{corollary}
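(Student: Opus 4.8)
The plan is to deduce this corollary formally from \cref{theorem:tilting-poset}, which already provides an anti-isomorphism of posets $\psi\colon [1,w_+^2]_L \to \tilt\Lambda$, $y \mapsto T_y$ (order-reversing, since the codomain there is $(\tilt\Lambda)^\op$). Because the displayed map sends $x$ to $T_{x^{-1}w_+}$, the natural move is to factor it as $\psi \circ \theta$, where $\theta\colon [w_-,w_+]_R \to [1,w_+^2]_L$ is the translate-and-invert map $\theta(x) = x^{-1} w_+$. Once I show that $\theta$ is itself a poset anti-isomorphism from $([w_-,w_+]_R, \leq_R)$ to $([1,w_+^2]_L, \leq_L)$, the composite $\psi \circ \theta$ will be a genuine (order-preserving) poset isomorphism $[w_-,w_+]_R \to \tilt\Lambda$ sending $x$ to $T_{x^{-1}w_+}$, which is exactly the claim.

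First I would check that $\theta$ lands in the correct interval and is bijective. Writing $y = x^{-1}w_+$, the defining condition $x \leq_R w_+$ reads $x^{-1}w_+ \in \Bcal_+$, that is $y \geq_L 1$; and the condition $w_- \leq_R x$, using $w_- = w_+^{-1}$, reads $w_+ x \in \Bcal_+$. Since $w_+ x = w_+^2 y^{-1}$, the latter is precisely $y \leq_L w_+^2$. Hence $x \in [w_-,w_+]_R$ if and only if $y \in [1,w_+^2]_L$, and the assignment is invertible (with inverse $y \mapsto w_+ y^{-1}$), so $\theta$ is a bijection between the two intervals.

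Next I would verify that $\theta$ reverses the two weak orders. For $x_1, x_2 \in [w_-,w_+]_R$ with images $y_1 = x_1^{-1}w_+$ and $y_2 = x_2^{-1}w_+$, a one-line computation gives $x_1^{-1} x_2 = y_1 y_2^{-1}$; since $x_1 \leq_R x_2 \Leftrightarrow x_1^{-1}x_2 \in \Bcal_+$ while $y_2 \leq_L y_1 \Leftrightarrow y_1 y_2^{-1} \in \Bcal_+$, these two relations literally coincide. Thus $x_1 \leq_R x_2 \Leftrightarrow \theta(x_1) \geq_L \theta(x_2)$, so $\theta$ is an anti-isomorphism, and composing with the anti-isomorphism $\psi$ of \cref{theorem:tilting-poset} produces the desired order-preserving bijection.

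I do not expect any serious obstacle: the argument is a formal manipulation of the left- and right-divisibility orders on $\Bcal$ together with the already-established \cref{theorem:tilting-poset}. The only point requiring a little care is the bookkeeping of order directions — keeping straight that \emph{both} $\psi$ and $\theta$ reverse order, so that their composite preserves it, and that the two-sided identities $w_+x = w_+^2 y^{-1}$ and $x_1^{-1}x_2 = y_1 y_2^{-1}$ correctly transport the $\leq_R$-conditions on $x$ into $\leq_L$-conditions on $y$. With these identities recorded, the conclusion is immediate.
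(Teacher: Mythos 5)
Your proof is correct and takes essentially the same approach as the paper: the paper's own proof reads ``Use \cref{theorem:tilting-poset} and the fact that $x \mapsto x^{-1} w_+$ defines an anti-isomorphism $[w_-, w_+]_R \to [1, w_+^2]_L$ of posets,'' which is exactly your factorization $\psi \circ \theta$. Your explicit verifications --- that $w_- \leq_R x \leq_R w_+$ translates to $1 \leq_L y \leq_L w_+^2$ via $w_+ x = w_+^2 y^{-1}$, and that $x_1^{-1}x_2 = y_1 y_2^{-1}$ gives the order reversal --- correctly fill in the details the paper leaves implicit.
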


\begin{proof}
 Use \cref{theorem:tilting-poset} and the fact that $x \mapsto x^{-1} w_+$ defines an anti-isomorphism $[w_-, w_+]_R \to [1, w_+^2]_L$ of posets.
\end{proof}

\begin{remark}
 The poset isomorphism from \cref{corollary:tilting-poset} restricts to an isomorphism $[1, w_+]_R \to \tilt_1\Lambda$.
\end{remark}

Next, we strengthen \cref{theorem:tilting-poset} by describing the \emph{simplicial complex of tilting modules}~$\Sigma(\Lambda)$ combinatorially.
Recall from \cite{Unger:07} that $\Sigma(\Lambda)$ is by definition the abstract simplicial complex whose $r$-dimensional faces are the sets $\{M_0,\ldots,M_r\}$ of isomorphism classes of indecomposable $\Lambda$-modules with the property that $M_0 \oplus \cdots \oplus M_r$ is a direct summand of a tilting module for $\Lambda$.
The vertex set of $\Sigma(\Lambda)$ is by \cref{corollary:self-orthogonals-are-partial-tilting} the set of isomorphism classes of indecomposable self-orthogonal $\Lambda$-modules.

\medskip

We define $\Vcal = \Vcal_n$ as the set \smash{$[1, w_+^2]_L \times \{1,\ldots,n\}$} modulo the equivalence relation $\sim$ generated by $(\sfrak_j x, i) \sim (x, i)$ for $\sfrak_j x >_L x$ and $j \neq i$.
Let $\Sigma = \Sigma_n$ be the $(n-1)$-dimensional abstract simplicial complex with
 \[
  \big\{
   \{ (x, i_0), \ldots, (x, i_r) \} \in \Vcal^{r+1}
   \,|\,
   1 \leq i_0 < \cdots < i_r \leq n
  \big\}
 \]
 as its set of $r$-dimensional faces.

\begin{theorem}
 \label{theorem:simplicial-complex}
 There is an isomorphism $\Sigma \to \Sigma(\Lambda)$ of abstract simplicial complexes given by the assignment $(x, i) \mapsto e_i T_x$.
\end{theorem}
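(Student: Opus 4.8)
The plan is to show that the vertex assignment $(x,i) \mapsto e_i T_x$ descends to a bijection on vertices and that this automatically carries the face structure across; the latter is almost formal, so the real work lies in the vertex bijection. First I would record the shape of $\Sigma(\Lambda)$ forced by the earlier results. By \cref{corollary:self-orthogonals-are-partial-tilting} together with \cref{theorem:tilting-poset}, every indecomposable self-orthogonal module is a summand of some basic tilting module $T_x$ with $x \in [1,w_+^2]_L$; since $- \Lotimes_\Lambda T_x$ is an equivalence, the indecomposable summands of $T_x$ are exactly $e_1 T_x, \ldots, e_n T_x$, and they are pairwise non-isomorphic because $\dim_K \End_{\Dcal(\Lambda)}(e_i T_x) = i$. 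Hence the vertices of $\Sigma(\Lambda)$ are precisely the $e_i T_x$, and a set of them is a face exactly when its members are summands of one $T_x$, i.e. of the form $\{e_{i_0} T_x, \ldots, e_{i_r} T_x\}$ with $i_0 < \cdots < i_r$. This matches the faces of $\Sigma$ verbatim, so the whole statement reduces to the claim that $(x,i) \mapsto e_i T_x$ induces a bijection from $\Vcal$ onto the vertices of $\Sigma(\Lambda)$.

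Well-definedness and surjectivity are the easy half. Each generating relation $(\sfrak_j x, i) \sim (x, i)$ with $j \neq i$ is preserved, since \cref{lemma:mutation-triangle} gives $e_i T_{\sfrak_j x} = e_i T_x$; surjectivity is the description of the vertices above. The main obstacle is injectivity: I must show that $e_i T_x \cong e_{i'} T_{x'}$ forces $(x,i) \sim (x',i')$. Comparing endomorphism dimensions gives $i = i'$ at once, so the task is to prove that the fibre $S := \{ T \in \tilt\Lambda \mid e_i T \cong M \}$ of a fixed indecomposable summand $M$ (with $\dim_K \End M = i$) is a single equivalence class, where I regard two elements of $[1,w_+^2]_L$ as related when the corresponding tilting modules are joined in $Q(\tilt\Lambda)$ by a path of covers whose mutation index is $\neq i$. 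One inclusion is again \cref{lemma:mutation-triangle}: a cover of index $\neq i$ fixes the $i$-th summand, so such paths stay inside one fibre.

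For the converse I would exploit the lattice structure of \cref{corollary:tilting-lattice} and the identity $\add(T) = T^\perp \cap {}^\perp(T^\perp)$ from \cref{theorem:auslander-reiten}. The two crucial facts are that $S$ is closed under meet and is convex in $\tilt\Lambda$. For meet-closure, if $M \in \add(T) \cap \add(T')$ then $M \in T^\perp \cap T'^\perp = (T \wedge T')^\perp$ and, since $T^\perp \cap T'^\perp \subseteq T^\perp$, also $M \in {}^\perp(T^\perp) \subseteq {}^\perp((T \wedge T')^\perp)$; thus $M \in \add(T \wedge T')$, and being indecomposable with $\dim_K \End M = i$ it is the $i$-th summand, so $T \wedge T' \in S$. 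For convexity, if $A \leq B \leq C$ with $A, C \in S$, then $A^\perp \subseteq B^\perp \subseteq C^\perp$ by \cref{remark:tilting-order}, whence $M \in A^\perp \subseteq B^\perp$ and $M \in {}^\perp(C^\perp) \subseteq {}^\perp(B^\perp)$, giving $M \in \add(B)$ and $B \in S$.

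With these two properties, connectivity follows from \cref{theorem:happel-unger}. Given $T, T' \in S$, set $L := T \wedge T' \in S$. Applying \cref{theorem:happel-unger} repeatedly to $T > L$ produces covers $T = c_0 \to c_1 \to \cdots \to L$ with each $c_k \geq L$; by convexity every $c_k$ lies in $[L, T] \subseteq S$, and since the two ends of each such cover share the $i$-th summand $M$, the mutation index labelling it (which exists by \cref{lemma:tilting-quiver}) cannot be $i$, as mutating at $i$ would change the $i$-th summand. The same construction links $T'$ to $L$, and concatenating yields a path of index-$\neq i$ covers from $T$ to $T'$; via the bijection $x \leftrightarrow T_x$ of \cref{theorem:tilting-poset} this is exactly a chain of generating relations $(x,i) \sim (x',i)$, which proves injectivity. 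The face correspondence noted at the outset then upgrades the vertex bijection to the desired isomorphism of simplicial complexes. The only delicate point is that the Happel--Unger descent terminates precisely at $L$, and this holds because the $c_k$ strictly decrease while staying $\geq L$ in the finite poset $\tilt\Lambda$.
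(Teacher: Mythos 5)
Your proposal is correct and takes essentially the same route as the paper: reduce the isomorphism to injectivity on vertices, show the common summand $M$ survives along a Happel--Unger descent from $T_x$ (resp.\ $T_y$) to the meet $T_x \wedge T_y$ using $\add(T) = T^\perp \cap {}^\perp(T^\perp)$, \cref{remark:tilting-order} and \cref{corollary:tilting-lattice}, deduce from \cref{lemma:mutation-triangle} that no mutation index along the way equals $i$, and finish with the injectivity of $x \mapsto T_x$ from \cref{theorem:tilting-poset}. The only difference is organizational: what you package as meet-closure and convexity of the fibre $S$, the paper inlines as the single containment $U \in (T_x^\perp \cap T_y^\perp) \cap {}^\perp(T_x^\perp) \subseteq \add(T_{x_r})$, and where you rerun the Happel--Unger argument explicitly, the paper simply cites step (1) of the proof of \cref{theorem:tilting-poset}.
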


\begin{proof}
 The assignment defines a surjective simplicial map by \cref{lemma:mutation-triangle,theorem:tilting-poset}.
 To prove that it yields an isomorphism, it is enough to check its injectivity.
 For this, assume $e_i T_x \cong U \cong e_i T_y$ for some vertex~$U$ of $\Sigma(\Lambda)$.
 We will show $(x, i) \sim (y, i)$.
 Let $T_x \wedge T_y$ be the meet of $T_x$ and $T_y$ in $\tilt\Lambda$.
 By (1) in the proof of \cref{theorem:tilting-poset} we can choose $x' = \sfrak_{j_\ell} \cdots \sfrak_{j_1}$ with $T_{x' x} = T_x \wedge T_y$.
 Then $T_x \geq T_{x_r} \geq T_x \wedge T_y$ for every $x_r = \sfrak_{j_r} \cdots \sfrak_{j_1} x$ with $0 \leq r \leq \ell$.
 Using \cref{remark:tilting-order,corollary:tilting-lattice} we conclude \smash{$T_x^\perp \supseteq T_{x_r}^\perp \supseteq T_x^\perp \cap T_y^\perp$} and then also \smash{${}^\perp (T_x^\perp) \subseteq {}^\perp (T_{x_r}^\perp)$}.
 Thus
 \[
  U
  \:\in\:
  \add(T_x) \cap \add(T_y)
  \:\subseteq\:
  (T_x^\perp \cap T_y^\perp) \cap {}^\perp (T_x^\perp)
  \:\subseteq\:
  \add(T_{x_r})
 \]
 by \cref{theorem:auslander-reiten}, so $e_i T_{x_r} \cong U$ for all $0 \leq r \leq \ell$.
 It follows $i \not\in \{j_1,\ldots,j_\ell\}$ by \cref{lemma:mutation-triangle} and therefore $(x, i) \sim (x' x, i)$.
 Analogously, there is $y'$ with $T_x \wedge T_y = T_{y' y}$ and $(y, i) \sim (y' y, i)$.
 But then $x' x = y' y$ because of $T_{x' x} = T_x \wedge T_y = T_{y' y}$ and \cref{theorem:tilting-poset}.
 Consequently, $(x, i) \sim (y, i)$.
\end{proof}

The \emph{boundary} $\partial\Delta$ of a pure-dimensional abstract simplicial complex~$\Delta$ is the subcomplex spanned by all its faces of codimension one that are contained in precisely one facet of $\Delta$.
Note that
\[
 \Sigma
 \:=\:
 \partial\Sigma
 \:\overset{.}{\cup}\:
 \{ (1, n) \}
 \:\overset{.}{\cup}\:
 \big\{ F \cup \{ (1, n) \} \,|\, F \in \partial\Sigma \big\}
 \,.
\]
Therefore $\Sigma$ is completely determined by its boundary.

\begin{example}
 The geometric realization of $\partial\Sigma(\Lambda_3)$ looks as follows:
 \[
  \scalebox{0.8}{
   \newcommand{\vertex}[1]{\draw[fill] (#1) circle (0.3mm);}
   \begin{tikzpicture}[rotate = -90, scale = 1.7]
    \clip (-1.9,-4.2) rectangle (1.9, 4.2);

    \pgfmathsetmacro{\n}{6}
    \pgfmathsetmacro{\angle}{360/\n}
    \pgfmathsetmacro{\startangle}{180 + \angle}
    \pgfmathsetmacro{\y}{cos(\angle/2)}

    \foreach \i in {1,2,...,\n} {
      \pgfmathsetmacro{\x}{\startangle + \angle*\i}
      \path (0, \y) + (\x:1cm) coordinate (v\i);
      \path (0,-\y) + (\x:1cm) coordinate (w\i);
    }
    \path (v2) + (0, 2*\y) coordinate (x1);
    \path (v5) + (0, 2*\y) coordinate (x2);
    \path (w5) + (0,-2*\y) coordinate (y1);
    \path (w2) + (0,-2*\y) coordinate (y2);

    \foreach \i in {1,2,...,\n} {
      \vertex{v\i};
      \vertex{w\i};
    }
    \vertex{x1};
    \vertex{x2};
    \vertex{y1};
    \vertex{y2};

    \draw (v1) -- (v2) -- (v3) -- (v4) -- (v5) -- (v6);
    \draw (w1) -- (w2) -- (w3) -- (w4) -- (w5) -- (w6) -- (w1);

    \draw (x1) -- (v3);
    \draw (x2) -- (v4);
    \draw (y1) -- (w6);
    \draw (y2) -- (w1);

    \pgfmathsetmacro{\l}{2.5}

    \draw (x2) to[out=  65, in=  60, looseness = \l] (w2);
    \draw (x1) to[out= 115, in= 120, looseness = \l] (w5);
    \draw (y1) to[out= -65, in= -60, looseness = \l] (v2);
    \draw (y2) to[out=-115, in=-120, looseness = \l] (v5);

    \pgfmathsetmacro{\hn}{ 0.74}
    \pgfmathsetmacro{\hs}{-0.74}

    \draw node at (\hn,-3*\y) {$X_2$};
    \draw node at (\hn,-2*\y) {$P_1$};
    \draw node at (\hn,-1*\y) {$M_2$};
    \draw node at (\hn, 0*\y) {$C_1$};
    \draw node at (\hn, 1*\y) {$N_2$};
    \draw node at (\hn, 2*\y) {$J_1$};
    \draw node at (\hn, 3*\y) {$Y_2$};

    \draw node at (\hs,-3*\y) {$X_1$};
    \draw node at (\hs,-2*\y) {$P_2$};
    \draw node at (\hs,-1*\y) {$M_1$};
    \draw node at (\hs, 0*\y) {$C_2$};
    \draw node at (\hs, 1*\y) {$N_1$};
    \draw node at (\hs, 2*\y) {$J_2$};
    \draw node at (\hs, 3*\y) {$Y_1$};
   \end{tikzpicture}
  }
 \]
 Its vertices are the following self-orthogonal modules where $\otimes = \otimes_{\Lambda_3}$:
 \[
  \arraycolsep 5pt
  \begin{array}{l l l l l}
   P_1 = e_1 \Lambda_3
   &
   M_1 = e_1 I_{s_1}
   &
   C_1 = e_1 I_{w_0}
   &
   N_1 = D(I_{s_1} e_1)
   &
   J_1 = D(\Lambda_3 e_1)
   \\
   P_2 = e_2 \Lambda_3
   &
   M_2 = e_2 I_{s_2}
   &
   C_2 = e_2 I_{w_0}
   &
   N_2 = D(I_{s_2} e_2)
   &
   J_2 = D(\Lambda_3 e_2)
  \end{array}
 \]
 \[
  \arraycolsep 40pt
  \begin{array}{l l}
   X_1 = e_1 I_{s_1} \otimes I_{s_1}
   &
   Y_1 = D(I_{s_1} \otimes I_{s_1} e_1)
   \\
   X_2 = e_2 I_{s_2} \otimes I_{s_2}
   &
   Y_2 = D(I_{s_2} \otimes I_{s_2} e_2)
  \end{array}
 \]
\end{example}

\medskip

Returning to the general case, let $p_{n,i}$ be the number of isomorphism classes of modules occurring as $i$-th summand of some tilting module~$T_x$ for~$\Lambda_n$.
By \cref{theorem:simplicial-complex} we have
\[
 p_{n,i}
 \:=\:
 \#\{ (x, i) \in \Vcal_n \}
 \,.
\]

\begin{remark}
 Computations for small $n$ suggest that $p_{n,i}$ is the integer $a_{n+1,i+1}$ in \href{https://oeis.org/A046802}{OEIS:A046802} such that \smash{$p_n := \sum_{1 \leq i \leq n} p_{n,i}$} would be one less than the number of arrangements of an $n$-element set (\href{https://oeis.org/A000522}{OEIS:A000522}):
 \par\vspace{0.2\baselineskip}
 \begin{center}
  \newcolumntype{D}{C{4em}}
  \scalebox{0.8}{
  \begin{tabular}{c||D|D|D|D|D||D}
    $n$ & $p_{n,1}$ & $p_{n,2}$ & $p_{n,3}$ & $p_{n,4}$ & $p_{n,5}$ & $p_n$
    \\ \hline\hline
    1 & 1 & - & - & - & - & 1
    \\ \hline
    2 & 3 & 1 & - & - & - & 4
    \\ \hline
    3 & 7 & 7 & 1 & - & - & 15
    \\ \hline
    4 & 15 & 33 & 15 & 1 & - & 64
    \\ \hline
    5 & 31 & 131 & 131 & 31 & 1 & 325
  \end{tabular}
  }
 \end{center}
 \par\vspace{0.4\baselineskip}
 Since the first summands of the tilting modules $T_x$ are the exceptional modules, this would be in line with \cite{Hille.Ploog:17} who proved that the number of isomorphism classes of exceptional modules for $\Lambda_n$ is $2^n - 1$.
 In fact, checking $p_{n,1} = 2^n - 1$ would give an alternative proof of their result.
\end{remark}

\section{Exceptional sequences}

In this final section we will relate the action of the braid group on full exceptional sequences in $\Dcal(\Lambda)$, which was studied in \cite{Hille.Ploog:17}, to its action on $\DPic(\Lambda)$ by left multiplication (via the map in \cref{proposition:braid-group-to-derived-picard-group}).
For simplicity, we only consider full exceptional sequences of modules.

\medskip

The set of all sequences $(E_1,\ldots,E_n)$ of isomorphism classes of exceptional $\Lambda$-modules with $\Ext^q_\Lambda(E_j, E_i) = 0$ for all $q \geq 0$ and $i < j$ will be denoted by $\exs\Lambda$ in what follows.

\begin{remark}
 For $0 \leq i < n$ and $i^* := n-i$ let $\Delta_{i^*}$ be the \emph{standard module} given by the short exact sequence
\[
 \label{standard-sequence}
 \tag{$\Delta$}
 \begin{tikzcd}
  0
  \ar[r]
  &
  e_i \Lambda
  \ar[r, "\beta_{i+1} \cdot"]
  &
  e_{i+1} \Lambda
  \ar[r]
  &
  \Delta_{i^*}
  \ar[r]
  &
  0
  \,.
 \end{tikzcd}
\]
 Applying the functor $- \Lotimes_\Lambda I_w$ for $w \in \Scal$ yields a short exact sequence
\[
 \begin{tikzcd}
  0
  \ar[r]
  &
  e_i I_w
  \ar[r]
  &
  e_{i+1} I_w
  \ar[r]
  &
  \Ecal_{w,i^*}
  \ar[r]
  &
  0
 \end{tikzcd}
\]
with $\Ecal_{w,i^*} := \Delta_{i^*} \otimes_\Lambda I_w \cong \Delta_{i^*} \Lotimes_\Lambda I_w$ (compare the proof of \cref{lemma:product-of-classical-tilting-is-tilting}).

Then $\Ecal_w := (\Ecal_{w,1},\ldots,\Ecal_{w,n}) \in \exs\Lambda$ because of $(\Delta_1,\ldots,\Delta_n) \in \exs\Lambda$ and \cref{theorem:rickard}.
In view of \cref{theorem:classical-tilting-poset} and \cite[Proposition~3.4]{Hille.Ploog:17} we have a commutative diagram of bijections:
\[
 \begin{tikzcd}
  &
  \Scal
  \ar[ld, "w \mapsto I_w"']
  \ar[rd, "w \mapsto \Ecal_w"]
  \\
  \tilt_1\Lambda
  \ar[rr, yshift = 3pt, "\Phi"]
  &&
  \exs\Lambda
  \ar[ll, yshift = -3pt, "\Psi"]
 \end{tikzcd}
\]

Let us prove that $\Phi$ and $\Psi$ commute with mutation, i.e.\ $\opL_{i^*}(\Ecal_w) = \Ecal_{s_i w}$ whenever $s_i w >_L w$.
By \cref{theorem:rickard,proposition:braid-group-to-derived-picard-group} it suffices to check this for $w = 1$.
Then the left mutation
\[
 \opL_{i^*}(\Ecal_1)
 \:=\:
 (\ldots, \Delta_{i^*-1}, \opL_{\Delta_{i^*}}(\Delta_{i^*+1}), \Delta_{i^*}, \Delta_{i^*+2}, \ldots)
\]
is given by the canonical triangle
\[
 \begin{tikzcd}[column sep = 1.5em]
  \RHom_\Lambda(\Delta_{i^*}, \Delta_{i^*+1}) \otimes_K \Delta_{i^*}
  \ar[r]
  &
  \Delta_{i^*+1}
  \ar[r]
  &
  \opL_{\Delta_{i^*}}(\Delta_{i^*+1})
  \ar[r]
  &
  \cdot
  \:\:
  \,.
 \end{tikzcd}
\]
Taking cohomology, $\opL_{\Delta_{i^*}}(\Delta_{i^*+1})$ is seen to be the middle term of a short exact sequence $0 \to S_i \to \opL_{\Delta_{i^*}}(\Delta_{i^*+1}) \to \Delta_{i^*} \to 0$ that corresponds to a non-zero element of the one-dimensional vector space \smash{$\Ext^1_\Lambda(\Delta_{i^*}, S_i)$}.
An easy calculation that uses $e_i I_i = \rad(e_i \Lambda)$ and $e_j I_i = e_j \Lambda$ for $j \neq i$ now shows $\Ecal_{s_i} = \opL_{i^*}(\Ecal_1)$.
\end{remark}

\begin{remark}
Every tilting module $T = I_v \otimes_\Lambda I_w$ with $v, w \in \Scal$ gives rise to an exceptional sequence $\Ecal = (\Ecal_1, \ldots, \Ecal_n)$ in $\Dcal(\Lambda)$ where
\[
 \begin{tikzcd}[row sep = -0.5ex]
  \Ecal_{i^*}
  \::=\:
  \:\:
  \cdots
  \ar[r]
  &
  0
  \ar[r]
  &
  e_i T
  \ar[r, "\beta_{i+1} \cdot"]
  &
  e_{i+1} T
  \ar[r]
  &
  0
  \ar[r]
  &
  \cdots
  \:\:
  \,.
  \\
  &
  \scalebox{0.7}{$-2$}
  &
  \scalebox{0.7}{$-1$}
  &
  \scalebox{0.7}{$0$}
  &
  \scalebox{0.7}{$1$}
 \end{tikzcd}
\]
This follows by applying $- \Lotimes_\Lambda T$ to the triangle induced by \cref{standard-sequence}.
\end{remark}

\section*{Acknowledgments}

First of all, I would like to thank Julia Sauter for raising the question of whether the tilting modules for the Auslander algebra $\Lambda_n$ can be classified.
I am grateful to Baptiste Rognerud for pointing me to \cite{Digne.Gobet:17}.
I thank both of them and Biao Ma for interesting discussions.

\nocite{OEIS,QPA}
\printbibliography

\end{document}